\font \boldfrak eufb10
\font \normalfrak eufm10
\def\fr#1{\hbox{\normalfrak #1}}
\def\bfr#1{\hbox{\boldfrak #1}}
\def\bA{{\bf A}}
\def\bB{{\bf B}}
\def\bG{{\bf G}}
\def\bH{{\bf H}}
\def\bM{{\bf M}}
\def\bN{{\bf N}}
\def\bS{{\bf S}}
\def\bT{{\bf T}}
\def\bU{{\bf U}}
\def\bZ{{\bf Z}}
\def\aq{/  \kern-.25em / }
\def\g{\mathfrak{g}}
\def\cH{\mathcal{ H}}
\def\Z{\mathbb{ Z}}
\def\cO{\mathcal{O}}
\def\boxit#1{\vbox{\hrule\hbox{\vrule\kern3pt
          \vbox{\kern3pt#1\kern3pt}\kern3pt\vrule}\hrule}}
\begin{document}

\newtheorem{theorem}{Theorem}[subsection]
\newtheorem{lemma}[theorem]{Lemma}
\newtheorem{proposition}[theorem]{Proposition}
\newtheorem{problem}[theorem]{Problem}
\newtheorem{corollary}[theorem]{Corollary}

\theoremstyle{definition}
\newtheorem{definition}[theorem]{Definition}
\newtheorem{example}[theorem]{Example}
\newtheorem{xca}[theorem]{Exercise}

\theoremstyle{remark}
\newtheorem{remark}[theorem]{\bf Remark}

\def\goth{\frak}

\def\GL{{\rm GL}}
\def\tr{{\rm tr}\, }
\def\A{{\Bbb A}}
\def\bs{\backslash}
\def\Q{{\Bbb Q}}
\def\R{{\Bbb R}}
\def\Z{{\Bbb Z}}
\def\C{{\Bbb C}}
\def\SL{{\rm SL}}
\def\cS{{\cal S}}
\def\cH{{\cal H}}
\def\G{{\Bbb G}}
\def\F{{\Bbb F}}
\def\cF{{\cal F}}

\def\cB{{\cal B}}
\def\cA{{\cal A}}
\def\cE{{\cal E}}

\newcommand{\oB}{{\overline{B}}}
\newcommand{\oN}{{\overline{N}}}

\def\CC{{\Bbb C}}
\def\ZZ{{\Bbb Z}}
\def\QQ{{\Bbb Q}}
\def\cS{{\cal S}}

\def\Ad{{\rm Ad}}

\def\bG{{\bf G}}
\def\bH{{\bf H}}
\def\bT{{\bf T}}
\def\bM{{\bf M}}
\def\bB{{\bf B}}
\def\bN{{\bf N}}
\def\bS{{\bf S}}
\def\bZ{{\bf Z}}
\def\t{\kern.1em {}^t\kern-.1em}
\def\cc#1{C_c^\infty(#1)}
\def\Fx{F^\times}
\def\half{\hbox{${1\over 2}$}}
\def\T{{\Bbb T}}
\def\Ox{{\frak O}^\times}
\def\Ex{E^\times}
\def\Ecl{{\cal E}}

\def\g{{\frak g}}
\def\h{{\frak h}}
\def\k{{\frak k}}
\def\ft{{\frak t}}
\def\n{{\frak n}}
\def\b{{\frak b}}

\def\2by2#1#2#3#4{\hbox{$\bigl( 
{#1\atop #3}{#2\atop #4}\bigr)$}}
\def\wh{\Xi}
\def\C{{\Bbb C}}
\def\bs{\backslash}
\def\adots{\mathinner{\mkern2mu
\raise1pt\hbox{.}\mkern2mu
\raise4pt\hbox{.}\mkern2mu
\raise7pt\hbox{.}\mkern1mu}}

\def\tim{\bf}
\def\timit{\it}
\def\timsm{\scriptstyle}
\def\secttt#1#2{\vskip.2in\noindent
{$\underline{\hbox{#1}}$\break (#2)}\vskip.2in}
\def\sectt#1{\vskip.2in\noindent
{$\underline{\hbox{#1}}$}\vskip.2in}
\def\sectm#1{\vskip.2in\noindent
{$\underline{#1}$}\vskip.2in}
\def\mat#1{\left[\matrix{#1}\right]}
\def\cc#1{C_c^\infty(#1)}
\def\ds{\displaystyle}
\def\Hom{\mathop{Hom}\nolimits}
\def\Ind{\mathop{Ind}\nolimits}
\def\bs{\backslash}
\def\ni{\noindent}
\def\eb{{\bf e}}
\def\fb{{\bf f}}
\def\hb{{\bf h}}
\def\rg{{\goth R}}
\def\ig{{\goth I}}
\def\ccl{{\cal C}}
\def\dcl{{\cal D}}
\def\ecl{{\cal E}}
\def\hcl{{\cal H}}
\def\ocl{{\cal O}}
\def\ncl{{\cal N}}
\def\ag{{\goth a}}
\def\bg{{\goth b}}
\def\cg{{\goth c}}
\def\og{{\goth O}}
\def\hg{{\goth h}}
\def\lg{{\goth l}}
\def\mg{{\goth m}}
\def\Og{{\goth O}}
\def\rg{{\goth r}}
\def\sg{{\goth s}}
\def\Sg{{\goth S}}
\def\tg{{\goth t}}
\def\zg{{\goth z}}
\def\C{{\Bbb C}}
\def\Q{{\Bbb Q}}
\def\R{{\Bbb R}}
\def\T{{\Bbb T}}
\def\Z{{\Bbb Z}}
\def\t{{}^t\kern-.1em}
\def\tr{\hbox{tr}}
\def\ad{{\rm ad}}
\def\Ad{{\rm Ad}}
\def\2by2#1#2#3#4{\hbox{$\bigl( {#1\atop #3}{#2\atop #4}\bigr)$}}

\def\Kcl{{\cal K}}
\def\Pcl{{\cal P}}
\def\Scl{{\cal S}}
\def\Vcl{{\cal V}}
\def\Wcl{{\cal W}}
\def\Ecl{{\cal E}}
\def\A{{\Bbb A}}
\def\F{{\Bbb F}}
\def\T{{\Bbb T}}
\def\go{{\goth O}}
\def\Ox{{\goth O}^\times}
\def\Fx{F^\times}
\def\Ex{E^\times}
\def\half{\hbox{${1\over 2}$}}
\def\vtwo{\vskip .2in}
\def\BibFH{{\bf [1]}}
\def\BibGod{{\bf [2]}}
\def\BibCrelle{{\bf [3]}}
\def\BibAmJ{{\bf [4]}}
\def\BibDuke{{\bf [5]}}
\def\BibJL{{\bf [6]}}
\def\BibRR{{\bf [7]}}

\def\BibFH{{\bf [1]}}
\def\BibGod{{\bf [2]}}
\def\BibCrelle{{\bf [3]}}
\def\BibAmJ{{\bf [4]}}
\def\BibDuke{{\bf [5]}}
\def\BibJL{{\bf [6]}}
\def\BibRR{{\bf [7]}}

\newcommand{\N}{\mathbb{N}}

\newcommand{\gen}{{\operatorname{gen}}}
\newcommand{\ind}{\operatorname{ind}}
\newcommand{\Wh}{\mathcal{W}}
\newcommand{\Kr}{\mathcal{K}}
\newcommand{\V}{\mathcal{V}}
\newcommand{\U}{\mathcal{U}}
\renewcommand{\O}{\mathcal{O}}
\newcommand{\lift}{\goth{g}}
\newcommand{\inv}{\iota}
\newcommand{\supp}{{\operatorname{Supp}}}
\def\cW{{\cal W}}

\title{Constructing Tame Supercuspidal Representations}
\author{Jeffrey Hakim}
\date{\today}
\maketitle
\abstract{A new approach to Jiu-Kang Yu's construction of tame supercuspidal representations of $p$-adic reductive groups is presented.  Connections with the theory of cuspidal  representations of finite groups of Lie type and the theory of distinguished representations are also discussed.}
\tableofcontents

\parskip=.13in

\section{Introduction}

This paper provides a new approach to the construction of  tame supercuspidal representations  of $p$-adic reductive groups.  It grew out of a desire to unify the theory of distinguished  tame supercuspidal representations from \cite{MR2431732}   with  Lusztig's analogous theory for  cuspidal representations   of finite groups of Lie type \cite{MR1106911}.


Our construction may be viewed as a revision of Yu's construction \cite{MR1824988} that associates supercuspidal representations to certain representations of compact-mod-center subgroups.  In the applications to distinguished representations, it plays a role analogous to  Deligne-Lusztig ``induction'' of  cuspidal representations from certain characters of tori \cite{MR0393266} (but our approach  is not cohomological). 

Yu's tame supercuspidal representations are parametrized by  complicated objects called generic, cuspidal $G$-data.  (See \cite[Definition 3.11]{MR2431732}.)
The fact that Yu's representations should be associated to simpler objects (such as characters of tori) is already evident  in Murnaghan's paper \cite{MR2767524},  Kaletha's paper \cite{KalYu}, and the general theory of the local Langlands correspondence.

In \cite{MR2767524} and  \cite{KalYu}, generic, cuspidal $G$-data are manufactured from simpler data and then  Yu's construction is applied.
By contrast, our intent is to simplify Yu's construction and make it more amenable to applications, such as the applications to distinguished representations discussed in \S\ref{sec:finitestuff}.

The main source of simplification is the elimination of  Howe factorizations in  Yu's construction.  (See \cite{MR0492087} and \cite[\S4.3]{MR2431732}.)  Recall that to construct a tame supercuspidal representation with Yu's construction, one has  to make a noncanonical choice of a factorization.  Removing the need for this choice, as we do, allows one to more easily develop applications, because one no longer needs technical lemmas proving the existence of  application-friendly factorizations.

Another technical notion required in Yu's construction is that of ``special isomorphisms.''  Though it is already known that special isomorphisms can be chosen canonically and their choice does not affect the isomorphism class of the constructed representation, special isomorphisms have remained a misunderstood and unpleasant aspect of the theory of tame supercuspidal representations.  (Here, we are really referring to the ``relevant'' special isomorphisms of \cite[Definition 3.17]{MR2431732}.  See \S3.4, especially Proposition 3.26, \cite{MR2431732} for more details.  Yu's original definition of ``special isomorphism'' is in \cite[\S10]{MR1824988}.)

Our construction takes as its input a suitable representation $\rho$ (called a ``permissible representation'') of a compact-mod-center subgroup of our $p$-adic group $G$ and then  canonically constructs the character of a representation $\kappa$ of an open compact-mod-center subgroup such that  $\kappa$ induces a supercuspidal representation $\pi$ of $G$. 
The construction of the character of $\kappa$ (and hence the equivalence classes of $\kappa$ and $\pi$) is canonical, reasonably simple, and makes no reference to special isomorphisms.  We do, in fact, use special isomorphisms to establish the existence of a representation $\kappa$ with the given character, but the meaning and role of special isomorphisms in the theory becomes more transparent.

We also observe that, in many cases, there is no need to explicitly impose genericity conditions in  our construction, but, instead, the necessary genericity properties are automatic. 
We are indebted to Tasho Kaletha for explaining to us that Yu's main genericity condition, condition {\bf GE1}, is built into our construction.

In general, the influence of Kaletha's ideas on this paper is hard to adequately acknowledge.
After the initial draft of the paper was circulated, the author became aware of Kaletha's work \cite{KalYu} on regular supercuspidal representations.  This prompted a major revision of the paper, during which various tameness conditions and other technicalities were removed.

The best evidence of the utility of our construction lies in the proof of Theorem \ref{motivation}, a result that is  stated in this paper  but proven   in a companion paper \cite{ANewHM}.  The proof uses the theory in this paper to establish stronger versions of the author's main results with Fiona Murnaghan, with considerably less effort.

Theorem \ref{motivation} links  the representation theories of finite and $p$-adic groups by providing a uniform formula for the dimensions of the spaces of invariant linear forms for distinguished representations.

For finite groups of Lie type, our formula is essentially a reformulation of the main result of \cite{MR1106911}.  For $p$-adic reductive groups, it is a reformulation of  the main result of  \cite{MR2431732}.
In both cases, these reformulations refine earlier reformulations in \cite{MR2925798}.


This paper is structured in an unorthodox way to accommodate a variety of readers, each of whom is only interested in select aspects of the theory. 
The construction itself is expeditiously explained (without proofs)
in \S\ref{sec:construction}.  The main result is Theorem \ref{ourmainresult}.
The proofs are  taken up in \S\ref{sec:techdetails}.
For the most part, we expect that the connection between our construction and Yu's construction will be self-evident in \S\ref{sec:construction}, but  \S\ref{sec:YuisoftenMe} formalizes this connection.  Finally, in \S\ref{sec:finitestuff}, we describe the connection between our construction and the Deligne-Lusztig construction in applications to the theory of distinguished representations.

We gratefully acknowledge the advice and generous help of Jeffrey Adler and Joshua Lansky throughout the course of this project. 

\section{The construction}\label{sec:construction}

\subsection{The inducing data}

Let $F$ be a finite extension of a field $\mathbb{Q}_p$ of $p$-adic numbers for some odd prime $p$, 
 and let $\overline{F}$ be an algebraic closure of $F$.
 
 Let $\bG$ be a connected reductive $F$-group.
 We are interested in constructing supercuspidal representations $\pi$ of $G = \bG(F)$.
 
The input for our construction is a suitable representation $(\rho , V_\rho)$ of a suitable compact-mod-center subgroup $H_x$ of $G$, and the map from $\rho$ to $\pi$ has some formal similarities to Deligne-Lusztig induction of cuspidal representations of finite groups of Lie type.

Let us now specify $H_x$ and $\rho$ more precisely.
 
 Fix an $F$-subgroup $\bH$ of $\bG$ that is a Levi subgroup of $\bG$ over some tamely ramified finite extension of $F$.
 We  require that the quotient $\bZ_\bH/\bZ_\bG$ of the centers of $\bH$ and $\bG$ is $F$-anisotropic.  

Fix  a vertex $x$ in the reduced building $\mathscr{B}_{\rm red}(\bH,F)$.  Let $H = \bH (F)$ and let $H_x$ be the stabilizer of $x$ in $H$.  Let $H_{x,0}$ be the corresponding (maximal) parahoric subgroup in $H$ and let $H_{x,0+}$ be its prounipotent radical.
 
Note that the reduced building $\mathscr{B}_{\rm red}(\bH,F)$ of $\bH$ is, by definition, identical to the extended building $\mathscr{B}(\bH_{\rm der} , F)$ of the derived group $\bH_{\rm der}$ of $\bH$.

Let $\bH_{\rm sc}\to \bH_{\rm der}$ be the universal cover of $\bH_{\rm der}$.
We identify the  buildings $\mathscr{B} (\bH_{\rm sc},F)$ and $\mathscr{B}(\bH_{\rm der},F)$   (as in \S\ref{sec:zext}) and let $H_{{\rm der},x,0+}^\flat$ denote the image of $H_{{\rm sc},x,0+}$ in $\bH_{\rm der}$.
 
 \bigskip
\begin{definition}\label{permissibledef} A representation of $H_x$ is {\bf permissible} if it is a smooth, irreducible, complex representation $(\rho ,V_\rho)$ of $H_x$ such that:
\begin{itemize}
\item[(1)] $\rho$ induces an irreducible (and hence supercuspidal) representation of $H$,
\item[(2)] the restriction of $\rho$ to $H_{x,0+}$ is a multiple of some character $\phi$ of $H_{x,0+}$, 
\item[(3)] $\phi$ is trivial on $H_{{\rm der},x,0+}^\flat$,
\item[(4)]  the  dual cosets $(\phi |Z^{i,i+1}_{r_i})^*$ (defined in \S\ref{sec:dualcoset}) contain elements that satisfy Yu's condition {\bf GE2}  (stated below in \S\ref{sec:GE2}).
\end{itemize}
\end{definition}

\bigskip

\begin{remark}\label{diszerorem}
When the order of the fundamental group $\pi_1(\bH_{\rm der})$ of $\bH_{\rm der}$ is not divisible by $p$, then we show in Lemma \ref{goodp} that $H_{{\rm der},x,0+}^\flat=H_{{\rm der},x,0+}$.  In this case, we show in Lemma \ref{goodptwo} that every permissible representation
$\rho$ may be expressed as $\rho_0 \otimes (\chi|H_x)$, where $\rho_0$ has depth zero and $\chi$ is a quasicharacter of $H$ that extends $\phi$.  (So our representations $\rho_0$ are essentially the same as the representations denoted by $\rho$ in \cite{MR1824988}.)  This allows one to appeal to the depth zero theory of Moy and Prasad \cite{MR1253198, MR1371680} to explicitly describe the representations $\rho$.  Even when $p$ divides the order of $\pi_1 (\bH_{\rm der})$, one can apply depth zero Moy-Prasad theory by lifting to a $z$-extension, as described in \S\ref{sec:zext}.  In this case, one obtains a factorization  over the $z$-extension, but the factors may not correspond to representations of $H_x$.
\end{remark}

\medskip
\begin{remark}\label{conditionfourrem}
In light of \cite[Lemma 8.1]{MR1824988}, condition (4) is only required when 
 $p$ is a torsion prime for the dual of the root datum of $\bG$ (in the sense of  \cite[\S7]{MR1824988} and \cite[\S2]{MR0354892}). 
 Using Steinberg's results (Lemma 2.5 and Corollary 1.13 of \cite{MR0354892}), we can make this more explicit as follows.  Since we assume $p\ne 2$, condition (4) is  needed precisely when 
\begin{itemize}
\item
$p=3$ and $\bG_{\rm der}$ has a factor of type $E_6$, $E_7$, $E_8$ or $F_4$,
\item
$p=5$ and $\bG_{\rm der}$ has a factor of type $E_8$, or 
\item  $\bG_{\rm der}$ has a factor of type $A$ and
$p$ is a torsion prime for $X/\Z \Phi$, where $\Z\Phi$ is the root lattice and $X$ is the character lattice relative to some maximal torus of $\bG$.
\end{itemize}
  In particular, condition (4) is unnecessary for  $\GL_n$, but is required for $\SL_n$.
\end{remark}

\medskip
\begin{remark}
A given permissible representation can correspond to supercuspidal representations for two different groups.  For example, suppose $\Psi = (\vec\bG,y,\rho_0,\vec\phi)$ is a generic cuspidal $G$-datum with $d>0$ and $\phi_d =1$, with notations as in \cite{MR2431732}.  Let $\Psi'$ be the datum obtained from $\Psi$ by deleting the last components of $\vec\bG$ and $\vec\phi$.  Then the tame supercuspidal representations of $G$ and $G^{d-1}$ associated to $\Psi$ and $\Psi'$, respectively, are both associated to the same permissible representation $\rho_0\otimes (\prod_{i=0}^{d-1}(\phi_i|G^0_{[y]}))$.
\end{remark}

\subsection{Basic notations}

We use boldface letters for $F$-groups and non-boldface for the groups of $F$-rational points.  Gothic letters are used for  Lie algebras, and  the subscript ``der'' is used for derived groups of algebraic groups and their Lie algebras.  For example, $G = \bG (F)$,   $\bfr{g} = {\rm Lie}(\bG)$, $\mathfrak{g} = \bfr{g}(F)$, and $\bG_{\rm der}$ is the derived group of $\bG$.  We let $G_{\rm der}$ denote $\bG_{\rm der}(F)$, but caution that this is not necessarily the derived group of $G$.

In general, our notations tend to follow \cite{MR2431732}, which, in turn, mostly follows \cite{MR1824988}.   This applies, in particular, to groups $G_{y,f}$ associated to concave functions, such as  Moy-Prasad groups $G_{y,s}$ and groups $\vec G_{y,\vec s}$ associated to twisted Levi sequences $\vec\bG$ and admissible sequences $\vec s$.  Many of these notations are recalled in  \S\ref{sec:expsection}, as well as conventions regarding variants of exponential maps (such as Moy-Prasad isomorphisms).
We use colons in our notations for quotients, for example,
 $G_{y,s:r} = G_{y,s}/G_{y,r}$.  Similar notations apply to the Lie algebra filtrations and to $F$-groups other than $\bG$. 

Moy-Prasad filtrations  over $F$ are defined with respect to the standard valuation $v_F$ on $F$.  Moy-Prasad filtrations over extensions of $F$ are also defined with respect to $v_F$.   Thus we have intersection formulas such as $\bG (E)_{y,r}\cap G = G_{y,r}$.

Let  $$E_r =E\cap v_F^{-1}([r,\infty])$$ and, for $r>0$,
$$E_r^\times = 1+ E_r.$$

Given a point $y$ in the reduced building $\mathscr{B}_{\rm red}(\bG , F)$, we let $\mathsf{G}_y$ denote the associated reductive group defined over the residue field $\fr{f}$ of $F$.  Thus $\mathsf{G}_y(\fr{F}) = G(F^{\rm un})_{y,0:0+}$, where $F^{\rm un}$ is the maximal unramified extension of $F$ in $\overline{F}$, and $\fr{F}$ is the residue field of $F^{\rm un}$ (which is an algebraic closure of $\fr{f}$).

\subsection{The torus $\bT$ and the subtori $\bT_{\mathscr{O}}$}
The construction requires the choice of a maximal, elliptic $F$-torus $\bT$ in $\bH$ with the   properties:
\begin{itemize}
\item The splitting field $E$ in $\overline{F}$ of $\bT$ over $F$ is a tamely ramified extension of $F$.  (Note that $E/F$ is automatically a finite Galois extension.)
\item The point $x$ is the unique fixed point of ${\rm Gal}(E/F)$ in the apartment $\mathscr{A}_{\rm red}(\bH,\bT, E)$.
\end{itemize}

\medskip
\begin{remark}In choosing $\bT$, we are appealing to some known facts:
\begin{itemize}
 \item $\mathscr{B}_{\rm red}(\bH , F)= \mathscr{B}_{\rm red}(\bH,E)^{{\rm Gal}(E/F)}$, since $E/F$ is tamely ramified,  according to a result of Rousseau \cite{MR0491992}. (See also \cite{MR1871292}.)
 \item $\mathscr{A}_{\rm red} (\bH,\bT,F)$ consists of a single point, since  $\bT$ is elliptic. 
\item Not only do tori $\bT$ of the above kind exist, but DeBacker \cite{MR2214792} gives a construction of a special class of such tori:\begin{itemize}
\item Choose an elliptic maximal $\fr{f}$-torus $\mathsf{T}$ in $\mathsf{H}_x$.\hfill\break  (See Lemma 2.4.1 \cite{MR2214792}.)  \item  Choose a maximal $F^{\rm un}$-split torus $\bS$ in $\bH$ such that
\begin{itemize}
\item $\bS$ is defined over $F$, 
\item $x\in \mathscr{A}_{\rm red}(\bH,\bS,F^{\rm un})$, and 
\item $\mathsf{T}(\fr{F})$ is the image of $\bS(F^{\rm un})\cap \bH (F^{\rm un})_{x,0}$ in $\mathsf{H}_x(\fr{F})$.
\end{itemize}
(See Lemma 2.3.1 \cite{MR2214792}.)
 \item
Take $\bT$ to be the centralizer  of $\bS$ in $\bH$.  
\end{itemize}
Such tori $\bT$ are called ``maximally unramified elliptic maximal tori'' in \S3.4.1 \cite{KalYu}, and  various reformulations of the definition are provided there.  For example, an elliptic maximal $F$-torus $\bT$ is maximally unramified  if it is  contained in a Borel subgroup of $\bH$ over $F^{\rm un}$.  Lemma 3.4.4 \cite{KalYu} says that any two maximally unramified elliptic maximal tori in $\bH$ corresponding to the same point in $\mathscr{B}_{\rm red}(\bH,F)$ must be $H_{x,0+}$-conjugate.
Lemma 3.4.18 \cite{KalYu} says every regular depth zero supercuspidal representation of $H$ comes from a regular depth zero character of the $F$-rational points of a maximally unramified elliptic maximal torus of $\bH$.
\end{itemize}
\end{remark}

\medskip
\begin{remark} It is necessarily the case that $T\subset H_x$ since $T$ preserves both $\mathscr{A}_{\rm red}(\bH,\bT,E)$ and $\mathscr{B}_{\rm red} (\bH,F)$, and the intersection of the latter spaces is $\{ x\}$.
\end{remark}

\medskip
\begin{remark} When $H_{\rm der}$ is compact, every $F$-torus in $\bH$ is elliptic and $\mathscr{B}_{\rm red} (\bH,F)$ is a point $x$ and so $H_x =H$.  In this case, every maximal $F$-torus $\bT$ in $\bH$ is elliptic and $x$ is (trivially) the unique ${\rm Gal}(E/F)$-fixed point in $\mathscr{A}_{\rm red}(\bH,\bT,E)$.  This example illustrates, in the extreme, that the point $x$ does not  determine the torus $\bT$.
\end{remark}


%

Let $\Phi = \Phi (\bG,\bT)$ and $\Gamma = {\rm Gal}(\overline{F}/F)$.  Given a $\Gamma$-orbit $\mathscr{O}$ in $\Phi$, let $\bT_{\mathscr{O}}$ be the torus generated by the tori $\bT_a = \text{image}(\check a)$ as $a$ varies over $\mathscr{O}$.

For positive depths, the Moy-Prasad filtration of $T_{\mathscr{O}} = \bT_{\mathscr{O}}(F)$ is given using the norm map
 \begin{eqnarray*}
N_{E/F}: \bT_{\mathscr{O}}(E)&\to& T_{\mathscr{O}} \\
t&\mapsto& \prod_{\gamma\in {\rm Gal}(E/F)} \gamma (t)
\end{eqnarray*}
as follows.  If $a\in \mathscr{O}$ and $r>0$ then, according to Lemma \ref{painintheneck},  $$T_{\mathscr{O},r} =N_{E/F}(\check a (E_r^\times))= N_{E/F}(\bT_a(E)_r)= N_{E/F}(\bT_{\mathscr{O}}(E)_r),$$ for all $a\in\mathscr{O}$.

\bigskip\begin{remark}
Though the tori $\bT_{\mathscr{O}}$ are not explicitly considered in \cite{KalYu}, the norm map on $\bT_a(E)$ plays a prominent role there, and our use of the norm map has  been influenced by the theory in \cite{KalYu}.  (See \cite[\S3.7]{KalYu}.)
\end{remark}

\subsection{Recovering $\vec r$ and $\vec\bG$}\label{sec:recovery}

A key  step in our construction is to recursively construct from $\phi$  a sequence $$\vec\bG = (\bG^0,\dots , \bG^d)$$of subgroups
$$\bH = \bG^0\subsetneq \cdots \subsetneq \bG^d=\bG$$
and a sequence
$$\vec r = (r_0,\dots ,r_d)$$
of real numbers
$$0\le r_0 <\cdots < r_{d-1} \le r_{d}.$$

We construct the groups recursively in the order $\bG^d,\dots ,\bG^0$ (and similarly for $r_d,\dots , r_0$), so $d$ should be treated initially as an unknown nonnegative integer whose value only becomes evident at the end of the recursion.
This indexing is compatible with \cite{MR1824988}, but it is the opposite of what is done in \cite{MR0492087}.

To begin the recursion, we take  $\bG^d = \bG$ and 
$$r_d = \text{depth}(\rho) = \text{depth}(\phi).$$

When $\bG = \bH$, we declare that $d=0$.

Now assume $\bH\subsetneq \bG$.  Suppose $i\in \{ 0,\dots , d-1\}$ and $\bG^{i+1}$ has been defined and strictly contains $\bH$.

In general, given $\bG^{i+1}$, we take
$$\bH^i = \bG^{i+1}_{\rm der}\cap \bH,\qquad \bT^i = \bG^{i+1}_{\rm der}\cap \bT.$$
Note that $\bT^i$ is the
torus generated by the tori $\bT_{\mathscr{O}}$ associated to \break $\Gamma$-orbits $\mathscr{O}$ in $\Phi (\bG^{i+1},\bT)$.  
(See \cite[Proposition 8.1.8(iii)]{MR1642713}.)

To say $\bT$ is elliptic in $\bH$ means that $\bT/\bZ_\bH$ is $F$-anisotropic or, equivalently, $\bH_{\rm der}\cap \bT$ is $F$-anisotropic.  Since we assume $\bZ_\bH/\bZ_\bG$ is $F$-anisotropic, $\bT$ must be elliptic in $\bG$.  So $\bT^{d-1}$, and hence $\bT^i$, must be $F$-anisotropic.
It follows that $x$ determines unique points (that we also denote $x$) in $\mathscr{B}_{\rm red}(\bG^{i+1},F)$ and $\mathscr{B}_{\rm red} (\bH^i,F)$.  

\bigskip
\begin{remark}
Our conventions regarding embeddings of buildings are similar to those in \cite{MR2431732}.  See, in particular, Remark 3.3 \cite{MR2431732}, regarding the embedding of $\mathscr{B}_{\rm red}(\bG^i,F)$ in $\mathscr{B}_{\rm red}(\bG^{i+1},F)$.
It should be understood that the images of $x$ in the various buildings  depends on the choice of $\bT$.  As in \cite{MR1824988}, it will be easy to see that varying $\bT$ does not change the isomorphism class of the resulting supercuspidal representation.  (See the discussion after Lemma 1.3 \cite{MR1824988}.)
\end{remark}

At this point, we encounter a technical issue involving the residual characteristic $p$.

Suppose first that $p$ does not divide the order of the fundamental group $\pi_1(\bG_{\rm der})$.  
Then we define
$$r_i = \text{depth}(\phi|H^i_{x,0+}) = \text{depth}(\phi |T^i_{0+}).$$
The latter identity of depths is a consequence of Lemma \ref{HTtransfer}.

If $p$ divides the order $\pi_1(\bG_{\rm der})$, we can simply replace $\bG$ by a $z$-extension $\bG^\sharp$ of the type considered in \S\ref{sec:zext}.  In other words, we  identify $G$ with $G^\sharp /N$, where $\bN$ is the kernel of the $z$-extension, and  apply our construction to $\bG^\sharp$ instead of $\bG$.

Though it is somewhat tedious, the latter case can also be translated into terms that are intrinsic to $G$.  (In other words, one does not need to refer to the $z$-extension, but one does need to refer to the universal cover of $\bG_{\rm der}$.)
To appreciate the source of complications, consider, for example, the character $\phi$ of $H_{x,0+}$.  According to Lemma 3.5.3 \cite{KalYu}, we have a surjection $H^\sharp_{x,0+}\to H_{x,0+}$, and thus the pullback $\phi^\sharp$ of $\phi$ to $H^\sharp_{x,0+}$ captures all of the information carried by $\phi$.  
But we may not have similar surjections associated to the groups $H^i_{x,0+}$ and $T^i_{0+}$, and this creates difficulties.  The appropriate depths $r_i$ for the construction may no longer be the depths of the restrictions of $\phi$ to $H^i_{x,0+}$ and $T^i_{0+}$.  Therefore, we cannot simply define the $r_i$'s as above.

In light of the previous remarks, we generally assume $p$ does not divide the order of $\pi_1(\bG_{\rm der})$ when we describe our construction, though we do consider more general $p$ when considering the complete family of representations that the construction captures.  (See Lemma \ref{IamYu}.)

Continuing with the construction, we now take $\bG^i$ to be the (unique) maximal subgroup of $\bG$ such that:
\begin{itemize}
\item $\bG^i$ is defined over $F$.
\item $\bG^i$ contains $\bH$.
\item $\bG^i$ is a Levi subgroup of $\bG$ over $E$.
\item $\phi |H^{i-1}_{x,r_i}=1$.
\end{itemize}

(It follows from Lemma \ref{HTtransfer} that the condition $\phi |H^{i-1}_{x,r_i}=1$ is equivalent to the condition $\phi |T^{i-1}_{r_i}=1$.)

As soon as $\bG^i = \bH$, the recursion ends.  The value of $i$ for which $\bG^i=\bH$ is declared to be $i=0$.  The values of $d$ and the other indices also become evident at this point.

\medskip
\begin{remark}  There is an alternate way to construct $\vec r$ and $\vec\bG$ that is direct (as opposed to being recursive) and clarifies the meaning of ${\bf G}^i$ and $r_i$, as well as the fact that these objects are well defined.  It exploits the choice of $\bT$ and is inspired by \S3.7 \cite{KalYu}.
One can define $$r_0<\cdots < r_{d-1}$$ to be the sequence of positive numbers that occur as depths of the various characters $\phi |T_{\mathscr{O},0+}$ for $\mathscr{O}\in \Gamma\bs \Phi$.  (This does not preclude the possibility that $\phi |T_{\mathscr{O},0+}=1$ for some orbits $\mathscr{O}$.)
Next, when $i<d$, we take $$\Phi^i = \bigcup_{\mathscr{O}\in \Gamma\bs \Phi,\ \phi|T_{\mathscr{O},r_i}=1} \mathscr{O}.$$  One can define $\bG^i$ to be the unique $E$-Levi subgroup of $\bG$ that contains $\bH$ and is such that $\Phi (\bG^i,\bT) = \Phi^i$.
\end{remark}

\medskip
\begin{remark} To connect our narrative with Yu's construction, it might be helpful to think in terms of Howe's theory of factorizations of admissible quasicharacters (and its descendants), even though our approach avoids factorization theory.  In the present context,  a factorization of $\phi$ would consist of quasicharacters $\phi_i : G^i \to \C^\times$ such that $\phi = \prod_{i=0}^d (\phi_i|H_{x,0+})$.  But the factors $\phi_{i+1},\dots , \phi_d$ are trivial on $[G^{i+1},G^{i+1}]\cap H_{x,0+}$ and the factors $\phi_0,\dots , \phi_{i-1}$ are trivial on $H_{x,r_i}$.  Therefore, on $[G^{i+1},G^{i+1}] \cap H_{x,r_i}$, the character $\phi$ coincides with the quasicharacter $\phi_i$.  Lemma \ref{goodp} (with $\bH$ replaced by $\bG^{i+1}$) implies  that $[G^{i+1},G^{i+1}] \cap H_{x,r_i} = H^i_{x,r_i}$, so long as $p$ does not divide the order of the fundamental group of $\bG^{i+1}_{\rm der}$.  
Hence, $\phi|H^i_{x,r_i} = \phi_i|H^i_{x,r_i}$.  Our present approach to constructing supercuspidal representations is partly motivated by the heuristic that the essential information carried by the factor $\phi_i$ is contained in the restriction $\phi_i|H^i_{x,r_i}$ and, since this restriction agrees with $\phi |H^i_{x,r_i}$, there  is no need to use factorizations.  (See Lemma \ref{IamYu} for more details.)
\end{remark}

\medskip
\begin{remark}
Sequences such as $\vec\bG = (\bG^0,\dots ,\bG^d)$ are called\break ``tamely ramified twisted Levi sequences'' (Definition 2.43 \cite{MR2431732}), and one may view $\bG^{i+1}$ as being constructed from $\bG^i$ by adding unipotent elements.  By contrast, if $\bH^d = \bH$ and $\vec\bH = (\bH^0,\dots ,\bH^d)$ then each $\bH^{i+1}$ is obtained from $\bH^i$ by adding semisimple elements.
\end{remark}

\subsection{The inducing subgroup $K$}

The objective of our construction is to associate to $\rho$ an equivalence class of supercuspidal representations of $G$.  These representations are induced from a certain compact-mod-center subgroup $K$ of $G$ that is defined in this section.  Later, we define an equivalence class of representations $\kappa$ of $K$ by canonically defining the (common) character of these representations $\kappa$.  The representation $\pi$ of $G$ induced from such a $\kappa$ will lie in the desired equivalence class of supercuspidal representations $\pi$ associated to $\rho$.

For each $i\in \{ 0,\dots , d-1\}$, let $$s_i= r_i/2$$
and
\begin{eqnarray*}
J^{i+1} &=& (G^i,G^{i+1})_{x,(r_i,s_i)}\\
J^{i+1}_+ &=& (G^i,G^{i+1})_{x,(r_i,s_i+)}\\
J^{i+1}_{++} &=& (G^i,G^{i+1})_{x,(r_i+,s_i+)}.
\end{eqnarray*}
(The notations on the right hand side follow \cite[page 53]{MR2431732}.
See also the discussion in \S\ref{sec:expsection} below regarding the groups $\vec\bG_{x,\vec t}$ associated to the twisted Levi sequence $\vec\bG = (\bG^i,\bG^{i+1})$  and admissible sequences of numbers.)

Our desired inducing subgroup $K$ is 
given by 
$$K= H_xJ,$$
where
$$J = J^1\cdots J^d.$$

\subsection{The subgroup $K_+$}

The (equivalent) representations $\kappa$ of $K$ mentioned above will turn out to restrict to a multiple of a certain canonical character $\hat\phi$ of a certain subgroup $K_+$.
 The subgroup $K_+$ is 
given by 
$$K_+ = H_{x,0+}L_+,$$
where
\begin{eqnarray*}
L_+&=& (\ker \phi)L^1_+\cdots L^d_+,\\
 L^{i+1}_+&=& G^{i+1}_{\rm der}\cap J^{i+1}_{++}.
 \end{eqnarray*}

 The character $\hat\phi$ is the inflation $$\hat\phi = {\rm inf}_{H_{x,0+}}^{K_+}\kern-.3em (\phi )$$ of $\phi$  to $K_+$, that is, $\hat\phi (h\ell) = \phi (h)$, when $h\in H_{x,0+}$ and $\ell\in L_+$.   More details regarding the definition of $\hat\phi$ are provided in \S\ref{sec:hatphiwelldef}.

 \medskip
 \begin{remark}
 The inflation of characters from $H_{x,0+}$ to $K_+$ just considered should not be confused with a different inflation procedure considered in \cite{MR2431732}, where we are using the decomposition $K_+ = H_{x,0+}J^1_+\cdots J^d_+$.
 \end{remark}

\subsection{The dual cosets}\label{sec:dualcoset}

Fix $i\in \{ 0,\dots , d-1\}$ and define $\bZ^{i,i+1}$ to be the 
$F$-torus
$$\bZ^{i,i+1} = (\bG^{i+1}_{\rm der}\cap \bZ^i)^\circ,$$ where $\bZ^i$ is the center of $\bG^i$.

Fix a character $\psi$ of $F$ that is trivial on the maximal ideal $\mathfrak{P}_F$ but nontrivial on the ring of integers $\mathfrak{O}_F$.

\medskip
\begin{definition}\label{dualcosetdef} The {\bf dual coset} $ (\phi |Z^{i,i+1}_{r_i})^*$ of $\phi |Z^{i,i+1}_{r_i}$ is the coset in $\fr{z}^{i,i+1,*}_{-r_i:(-r_i)+}$ consisting of elements
 $X^*_i$ in $\fr{z}^{i,i+1,*}_{-r_i}$ that represent $\phi |Z^{i,i+1}_{r_i}$ in the sense that
$$\phi ( \exp (Y+\fr{z}^{i,i+1}_{r_i+})) = \psi (X^*_i(Y)),\quad \forall Y\in \fr{z}^{i,i+1}_{r_i}.$$
\end{definition}

(The notation ``$\exp$'' here is for the Moy-Prasad isomorphism on $\fr{z}^{i,i+1}_{r_i:r_i+}$.  See \S\ref{sec:expsection} for more details.)

Let $\fr{z}^{i,i+1}$ be the Lie algebra of $Z^{i,i+1}$, and let $\fr{z}^{i,i+1,*}$ be the dual of $\fr{z}^{i,i+1}$.
The decomposition $$\fr{g}^i = \fr{z}^i\oplus \fr{g}^i_{\rm der}$$ restricts to a decomposition $$\fr{h}^i = \fr{z}^{i,i+1} \oplus \fr{h}^{i-1}.$$

Using this, we associate to each coset in $\fr{z}^{i,i+1,*}_{-r_i:(-r_i)+}$ 
a character of $H^i_{x,r_i:r_i+}$ that is trivial on $H^{i-1}_{x,r_i:r_i+}$.  The character of $H^i_{x,r_i:r_i+}$ associated to the dual coset $(\phi |Z^{i,i+1})^*$ is just the restriction of $\phi$ since $\phi |H^{i-1}_{x,r_i}=1$.

In other words, if $X\in \fr{z}^{i,i+1}_{r_i}$ and $Y\in \fr{h}^{i-1}_{x,r_i}$ then
$$\phi (\exp (X+Y+ \fr{h}^i_{x,r_i+})) = \psi (X^*_i(X)),$$ where $X^*_i$ is any element of $ (\phi |Z^{i,i+1}_{r_i})^*$  and $$\exp: \fr{h}^i_{x,r_i:r_i+}\cong H^i_{x,r_i:r_i+}$$ is the isomorphism defined in \S\ref{sec:expsection}.

Lemma \ref{GEonelemma} establishes that every  element $X^*_i$ of the dual coset  $ (\phi |Z^{i,i+1}_{r_i})^*$ is $\bG^{i+1}$-generic of depth $-r_i$.  (Here, we are using condition (4) in Definition \ref{permissibledef}, as well as Lemma 8.1 \cite{MR1824988}.)

\medskip
\begin{remark} The decomposition $\fr{h}^i = \fr{z}^{i,i+1} \oplus \fr{h}^{i-1}$
restricts to
$$\fr{t}^i = \fr{z}^{i,i+1} \oplus \fr{t}^{i-1},$$
where $\fr{t}^i$ and  $\fr{t}^{i-1}$ are generated by the $\fr{t}_\mathscr{O}$'s associated to $\Phi (\bG^{i+1},\bT)$ and $\Phi (\bG^i,\bT)$, respectively.  We caution that $\fr{z}^{i,i+1}$ is usually not generated by $\fr{t}_{\mathscr{O}}$'s and, in fact, may not contain any $\fr{t}_{\mathscr{O}}$'s.
\end{remark}

\subsection{The construction}

For each $i\in \{ 0,\dots ,d-1\}$, the 
elements $X^*_i$ of the dual coset $ (\phi |Z^{i,i+1}_{r_i})^*$ also represent a character of $J^{i+1}_+$ that we denote by $\zeta_i$.  By construction, $\zeta_i$ is the inflation of a $\bG^{i+1}$-generic character of $G^i_{x,r_i}$ of depth~$r_i$.

Since $G^i_{x,r_i:r_i+}$ is isomorphic to a finite vector space over a finite field of characteristic $p$, every character of $G^i_{x,r_i:r_i+}$, in particular $\zeta_i$, must take values in the group $\mu_p$ of complex $p$-th roots of unity.

%
%
%

The space $V$ of our inducing representation $\kappa$ will be a tensor product
$$V = V_\rho\otimes V_0\otimes \cdots \otimes V_{d-1}$$
of $V_\rho$ with spaces $V_i$, such that each $V_i$ is the space of a Heisenberg representation $\tau_i$ attached to the character $\zeta_i$.

The next step is to define these Heisenberg representations.  

Let $W_i$ be the quotient $J^{i+1}/J^{i+1}_+$ viewed as an $\F_p$-vector space with nondegenerate (multiplicative) symplectic form
$$\langle uJ^{i+1}_+ ,vJ^{i+1}_+\rangle = \zeta_i(uvu^{-1}v^{-1}).$$
(The fact that this is a nondegenerate symplectic form is shown in \cite[Lemma 11.1]{MR1824988}.)

Let
$\mathscr{H}_i$ be the (multiplicative) Heisenberg $p$-group  such that
$$\mathscr{H}_i = W_i\times \mu_p$$
with multiplication given by
$$(w_1,z_1)(w_2,z_2) = (w_1w_2, z_1z_2 \langle w_1,w_2\rangle^{(p+1)/2}).$$
Let $(\tau_i,V_i)$ be a Heisenberg representation of $\mathscr{H}_i$ whose central character is the identity map on $\mu_p$.  (Up to isomorphism, $\tau_i$ is unique.)

Let $\mathscr{S}_i$ be the symplectic group ${\rm Sp}(W_i)$.  We let $\mathscr{S}_i$ act on $\mathscr{H}_i$ via its natural action on the first factor of $W_i\times \mu_p$.  The semidirect product $\mathscr{S}_i\ltimes \mathscr{H}_i$ is the Cartesian product $\mathscr{S}_i\times \mathscr{H}_i$ with multiplication
$$(s_1,h_1)(s_2,h_2)= (s_1s_2, (s_2^{-1}h_1)h_2).$$
Except for the case in which $p=3$ and $\dim_{\F_p}(W_i)=2$, there is a unique extension $\hat\tau_i$ of $\tau_i$ to a representation 
$$\hat\tau_i :\mathscr{S}_i\ltimes \mathscr{H}_i\to \GL (V_i).$$
When $p=3$ and $\dim_{\F_p}(W_i)=2$, we also have a canonical lift $\hat\tau_i$ that is specified in \cite[Definition 2.17]{MR2431732}.
Given $h\in H_x$, let $$\omega_i(h) = \hat\tau_i({\rm Int}(h),1),$$ where ${\rm Int}(h)$ comes from the conjugation action of $h$ on $J^{i+1}$.

We now state our main result:

\begin{theorem}\label{ourmainresult}  Suppose $\rho$ is a permissible representation of $H_x$.  
Up to isomorphism, there is a unique representation $\kappa = \kappa (\rho)$ of $K$ such that:
\begin{itemize}
\item[{\rm (1)}]  The character of $\kappa$ has support in $H_xK_+$.
\item[{\rm (2)}]  $\kappa |K_+$ is a multiple of $\hat\phi$.
\item[{\rm (3)}]  $\kappa | H_x = \rho \otimes \omega_0\otimes\cdots \otimes \omega_{d-1}$.
\end{itemize}
Then $\pi (\rho)= {\rm ind}_{K}^G(\kappa (\rho))$ is an irreducible, supercuspidal representation  of $G$ whose isomorphism class is canonically associated to $\rho$.  The isomorphism class of every tame supercuspidal representation of $G$ constructed by Yu in \cite{MR1824988} contains a representation $\pi (\rho)$.
\end{theorem}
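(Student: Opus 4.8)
The plan is to establish the theorem in two stages: first the existence and uniqueness of $\kappa$, and then the supercuspidality and canonicity of $\pi(\rho)$, followed by the comparison with Yu's construction. For the uniqueness part, observe that conditions (1)--(3) pin down the character of $\kappa$ completely: condition (3) fixes the restriction to $H_x$, condition (2) fixes the restriction to $K_+$ up to the multiplicity (which is forced by dimension count once we know the restriction to $H_x$), and condition (1) says the character vanishes off $H_xK_+$. So the content of uniqueness is that these prescriptions are \emph{consistent}---i.e. that the putative character function is genuinely the character of a representation of $K$---and here is exactly where I expect special isomorphisms to enter, as the author forewarns in the introduction. The natural approach is to build $\kappa$ concretely as (a twist of) $\rho \otimes \hat\tau_0 \otimes \cdots \otimes \hat\tau_{d-1}$ restricted along appropriate maps $K \to \mathscr{S}_i \ltimes \mathscr{H}_i$, using the $\omega_i$ already defined on $H_x$ together with the tautological action of $J^{i+1}$ on $W_i$; checking that these local pieces glue into a single well-defined representation of $K = H_x J^1 \cdots J^d$ is the technical heart, and this is presumably carried out in \S\ref{sec:techdetails}.

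Once $\kappa$ exists with the stated character, the proof that $\pi(\rho) = \operatorname{ind}_K^G \kappa$ is irreducible supercuspidal should follow the template of Yu's argument and its streamlining in \cite{MR2431732}: one verifies that $K$ is open and compact-mod-center (it is $H_x$, which is compact mod $\mathbf{Z}_\mathbf{H}(F) \supseteq \mathbf{Z}_\mathbf{G}(F)$, times the compact $J$, and $\mathbf{Z}_\mathbf{H}/\mathbf{Z}_\mathbf{G}$ is $F$-anisotropic by hypothesis), and then applies Mackey-theoretic intertwining criteria. The genericity of the elements $X_i^*$ in the dual cosets---guaranteed by condition (4) of permissibility via Lemma \ref{GEonelemma}---is precisely what forces the relevant intertwining spaces to be one-dimensional, exactly as condition {\bf GE2} does in \cite{MR1824988}; condition (1) of Definition \ref{permissibledef}, that $\rho$ induces irreducibly to $H$, plays the role of the cuspidality of the depth-zero datum. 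Canonicity of the isomorphism class---independence of the auxiliary choices of $\psi$, of the Heisenberg models $(\tau_i,V_i)$, of the lifts $\hat\tau_i$, and (as noted in the remark after Lemma 1.3 of \cite{MR1824988}) of $\bT$ itself---should reduce to observing that different choices conjugate or twist $\kappa$ into an equivalent representation without disturbing conditions (1)--(3).

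For the final sentence, that every tame supercuspidal of Yu is of the form $\pi(\rho)$, the strategy is to start from a generic cuspidal $G$-datum $(\vec\bG, y, \rho_0, \vec\phi)$ as in \cite[Definition 3.11]{MR2431732}, form $\phi = \prod_i (\phi_i|H_{x,0+})$ and $\rho = \rho_0 \otimes (\prod_{i=0}^{d-1}(\phi_i|G^0_{[y]}))$ as in the third remark after Definition \ref{permissibledef}, and check that this $\rho$ is permissible and that the recursive recovery procedure of \S\ref{sec:recovery} reconstructs the original $\vec r$ and $\vec\bG$ from $\phi$. Then one matches $\kappa(\rho)$ against Yu's inducing representation on the common subgroup $K$: Yu's representation restricted to $H_x$ is $\rho_0 \otimes \bigotimes \phi_i \otimes \bigotimes \omega_i$ after absorbing the factorization, its restriction to $K_+$ is a multiple of a character agreeing with $\hat\phi$, and its character is supported on $H_xK_+$---so by the uniqueness just proved, $\kappa(\rho)$ and Yu's $\kappa$ agree, whence the induced representations agree. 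This identification is the subject of \S\ref{sec:YuisoftenMe} and Lemma \ref{IamYu}, which one invokes here.

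The main obstacle, as I see it, is the well-definedness of $\kappa$ on all of $K$: the Heisenberg--Weil pieces $\hat\tau_i$ live on groups attached to the successive quotients $J^{i+1}/J^{i+1}_+$, and assembling them coherently across the chain $\bH = \bG^0 \subsetneq \cdots \subsetneq \bG^d = \bG$---so that the cocycle relations among the Heisenberg groups, the compatibility with the $H_x$-action through the $\omega_i$, and the prescribed restriction to $K_+$ all hold simultaneously---is precisely the delicate bookkeeping that Yu handled via special isomorphisms. Getting the character to be genuinely a character (positivity, integrality of multiplicities, the $\mu_p$-valued cocycle computations, and the $p=3$, $\dim_{\F_p}W_i = 2$ exceptional lift) is where the real work lies; everything downstream is a reprise of known supercuspidality arguments.
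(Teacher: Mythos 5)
Your proposal matches the paper's approach: the paper's proof of Theorem \ref{ourmainresult} is a three-line deferral to Lemma \ref{welldefined} (existence and well-definedness of $\kappa$ via special homomorphisms and the $\chi_i$-twists, plus uniqueness via Lemma \ref{kappainvariance}), Lemma \ref{irred} (irreducibility by Mackey intertwining, using Yu's Theorem 9.4 and the corrected Theorem 14.2 together with permissibility condition (1)), and Lemma \ref{IamYu} (identifying $\kappa(\rho)$ with Yu's $\kappa(\Psi)$ by showing agreement of characters on $H_x K_+$). You identified all three components and their roles correctly, including the precise way conditions (1)--(3) pin down the character on $H_x K_+$ and why the correspondence with Yu's construction reduces to a character comparison.
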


\begin{proof}
The fact that the equivalence class of $\kappa (\rho)$ is well-defined and canonically associated to $\rho$ is proven in Lemma \ref{welldefined}.
The proof of the latter result also establishes that $\kappa (\rho)$ is determined by the properties listed above.
The fact that $\pi (\rho)$ is irreducible  is proven in Lemma \ref{irred}.  This implies that $\pi (\rho)$ is also supercuspidal, as explained on page 12 of \cite{MR2431732}.
Lemma \ref{IamYu} establishes the connection between our construction and Yu's construction.\end{proof}

\medskip
\begin{corollary}
If $\rho$ is permissible and $\Theta_\pi$ is the smooth function on the regular set of $G$ that represents the character of  $\pi = \pi(\rho)$ then $\Theta_\pi$ is given  by the Frobenius formula
$$\Theta_\pi (g) =\sum_{C\in K\bs G/K}\ \sum_{h\in C}\dot\chi_\kappa (h gh^{-1}),$$
where $\dot\chi_{\kappa}$ is the function on $G$ defined by
$$\dot\chi_\kappa (jk) = \hat\phi(k)\cdot {\rm tr}(\rho (j))\cdot {\rm tr} (\omega_0(j))\cdots {\rm tr} (\omega_{d-1}(j)),$$ 
for $j\in H_x$ and $k\in K_+$, and $\dot\chi_\kappa \equiv 0$ on $G-H_xK_+$.
In particular, $\Theta_\pi (g) =0$ when $g$ does not lie in the $G$-invariant set generated by $H_xK_+$.
\end{corollary}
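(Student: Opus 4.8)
The plan is to obtain the formula as a formal consequence of Theorem~\ref{ourmainresult} together with the standard description of the distribution character of a representation compactly induced from an open, compact-mod-center subgroup. First I would record that $K$ is such a subgroup: since $\bH$ contains $\bZ_\bG$ (being a Levi subgroup of $\bG$ over a tame extension), we have $\bZ_\bG(F)\subseteq H_x\subseteq K$, and the ellipticity hypothesis on $\bZ_\bH/\bZ_\bG$ makes $K$ compact modulo $\bZ_\bG(F)$. By Lemma~\ref{irred}, $\pi = \pi(\rho) = {\rm ind}_K^G(\kappa)$ is irreducible supercuspidal, hence admissible, so by Harish-Chandra its character is represented on the regular set by a locally constant function $\Theta_\pi$. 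For such a compactly induced representation, the Frobenius character formula expresses this function, at a regular $g$, as the absolutely convergent sum
$$\Theta_\pi(g) = \sum_{Kh \in K\bs G}\dot\chi_\kappa(hgh^{-1}),$$
where $\dot\chi_\kappa$ is the extension by zero to $G$ of the trace character $\chi_\kappa$ of $\kappa$; this is the standard character identity for compact induction from open compact-mod-center subgroups (cf.~the analogous computations in \cite{MR1824988} and \cite{MR2431732}).

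The second step is to make $\dot\chi_\kappa$ explicit from properties (1)--(3) of Theorem~\ref{ourmainresult}. Every $g$ in the set $H_xK_+\subseteq K$ is a product $g = jk$ with $j\in H_x$ and $k\in K_+$; by multiplicativity of $\kappa$ and property (3), $\kappa(j) = \rho(j)\otimes\omega_0(j)\otimes\cdots\otimes\omega_{d-1}(j)$, while by property (2), $\kappa(k) = \hat\phi(k)\cdot{\rm Id}$, so that
$$\chi_\kappa(jk) = \hat\phi(k)\cdot{\rm tr}(\rho(j))\cdot{\rm tr}(\omega_0(j))\cdots{\rm tr}(\omega_{d-1}(j)).$$
In particular this value is independent of the decomposition $g = jk$, so the right-hand side is a well-defined function on $H_xK_+$. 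Property (1) says $\chi_\kappa$ vanishes on $K - H_xK_+$, so $\dot\chi_\kappa$ coincides with the function of the same name in the corollary. Finally, $\dot\chi_\kappa(hgh^{-1})$ is unchanged when $h$ is replaced by $kh$ with $k\in K$ (as $\dot\chi_\kappa$ is a $K$-conjugation-invariant function on $G$), so the sum over left cosets $K\bs G$ regroups as the iterated sum over $K\bs G/K$ and the left cosets it contains, which is precisely the asserted formula. The ``in particular'' clause is then immediate: if no $G$-conjugate of $g$ lies in $H_xK_+$, every summand $\dot\chi_\kappa(hgh^{-1})$ vanishes.

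The only point demanding genuine care is the invocation of the Frobenius formula itself --- that the distribution character of the admissible representation ${\rm ind}_K^G(\kappa)$ is represented, on the regular set, by the pointwise-convergent sum displayed above. This belongs to the classical character theory of induced representations of $p$-adic groups (absolute convergence at a regular $g$ coming from the fact that $\chi_\kappa$ is supported modulo center on a compact subset of $K$, so only finitely many of the relevant cosets contribute). I expect this to be the main obstacle, though an entirely standard one; the rest of the argument is bookkeeping with properties (1)--(3) and with Harish-Chandra's theorem that a supercuspidal character is a locally constant function on the regular set.
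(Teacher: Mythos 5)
Your proof is correct and follows essentially the same route the paper takes; the paper simply delegates the Frobenius formula to \cite[\S4]{MR1767897} and your argument supplies exactly the details being referenced: the standard character identity for ${\rm ind}_K^G$, plus the explicit form of $\dot\chi_\kappa$ read off from properties (1)--(3) of Theorem~\ref{ourmainresult}. Two minor notational points worth cleaning up: the elements of $K\bs G$ are cosets of the form $Kh$ (you call them ``left cosets,'' which invites confusion), and in regrouping over double cosets one should say that the inner sum runs over the $K$-cosets $Kh$ contained in $C$, which is what the paper's $\sum_{h\in C}$ must mean, since a literal sum over all elements of a double coset diverges.
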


We refer to \cite[\S4]{MR1767897} for details regarding the Frobenius formula.

\section{Technical details}\label{sec:techdetails}

This chapter is essentially a collection of technical appendices to the previous chapter.

\subsection{$z$-extensions}\label{sec:zext}

A {\bf $z$-extension} of $\bG$ (over $F$) 
consists of a connected reductive $F$-group $\bG^\sharp$ with simply connected derived group together with an associated exact sequence
$$1\to \bN\to\bG^\sharp\to\bG\to 1$$ of $F$-groups
such that $\bN$ is an induced torus that embeds in the center of $\bG^\sharp$.

To say that $\bN$ is an induced torus means that it is a product of tori of the form ${\rm Res}_{L/F}(\mathbb{G}_m)$, where each $L$ is a finite extension of $F$.  
This implies that the Galois cohomology $H^1(F,\bN)$ is trivial and hence
the cohomology sequence associated to the above exact sequence yields
an exact sequence
$$1\to N\to G^\sharp \to G\to 1$$
of $F$-points.
Therefore every representation of $G$ may be viewed as a representation of $G^\sharp$ that factors through $G$.

Not only do $z$-extensions always exist, but we can always choose a $z$-extension $\bG^\sharp$ of $\bG$ such that $\bN$ is a product of factors ${\rm Res}_{E/F}(\mathbb{G}_m)$, where, as usual, $E$ is the splitting field of $\bT$ over $F$.
In this case, the preimage $\bT^\sharp$ of $\bT$ in $\bG^\sharp$ is an $E$-split maximal torus in $\bG^\sharp$.  
Assume, at this point, that we have fixed such a $z$-extension.

The terminology ``$z$-extension'' and our definition are derived from  \cite[\S1]{MR683003}, but Kottwitz refers to Langlands's article
 \cite[pp.~721-722]{MR540901} as the source of the notion.  Existence results for $z$-extensions can be found in \cite{MR540901} and \cite[pp.~297-299]{MR654325}.

\medskip
{\bf The Philosophy of $z$-extensions.}  {\it When studying the representation theory of general connected reductive $F$-groups, it suffices to consider the representation theory of groups with simply connected derived group.  For groups of the latter type, technicalities involving bad primes (such as those related to the Kneser-Tits problem) are minimized (or eliminated).}

\medskip
Here, we are using the terminology ``bad primes'' to informally refer to any technical issues that might cause a general theory of representations of connected reductive $F$-groups to break down for a finite number of residual characteristics $p$ of the ground field~$F$.

Let $\bH^\sharp$ be the preimage of $\bH$ in $\bG^\sharp$.
Note that we have a natural identification of $\Phi (\bG,\bT)$ with $\Phi (\bG^\sharp,\bT^\sharp)$.  Then the center $Z(\bH)$ is the intersection of the kernels of the roots in $\Phi (\bH,\bT)$.  It is easy to see that the center of $\bH^\sharp$ must be the intersection of the kernels of the roots in $\Phi (\bG^\sharp,\bT^\sharp)$ that correspond to elements of $\Phi (\bH ,\bT)$.
Since $\bH^\sharp$ is the centralizer in $\bG^\sharp$ of the torus $Z(\bH^\sharp)^\circ$, it must be the case that $\bH^\sharp$ is a Levi subgroup of $\bG^\sharp$ (over $E$).  It follows that the fundamental group of $\bH^\sharp_{\rm der}$ embeds in the (trivial) fundamental group of $\bG^\sharp_{\rm der}$.
(This is observed in \cite[pp.~74-75]{MR0354892} and is easy to prove directly.)
It also follows
that our $z$-extension of $\bG$ restricts to a $z$-extension
$$1\to \bN \to \bH^\sharp\to \bH\to 1$$ of $\bH$.

Next, we observe that we have a natural ${\rm Gal}(E/F)$-equivariant identification of reduced buildings
$$\mathscr{B}_{\rm red}(\bH^\sharp ,E)\cong \mathscr{B}_{\rm red}(\bH,E)$$ as simplicial complexes.  If we identify the groups $$\bH^\sharp (E) /\bN (E) = (\bH^\sharp /\bN)(E) \cong \bH (E)$$ then $\bH (E)$ has the same action on $\mathscr{B}_{\rm red}(\bH^\sharp ,E)$ and $ \mathscr{B}_{\rm red}(\bH,E)$.

(Recall, from \cite[\S1.1]{MR546588}, that associated to each Chevalley basis of the Lie algebra $\fr{h}_{\rm der}$ of $\bH_{\rm der}$ is a point in $\mathscr{B}_{\rm red}(\bH,E)$.  But $\fr{h}_{\rm der}$ is naturally identified with the Lie algebra of $\bH^\sharp_{\rm der}$.
So a Chevalley basis determines a point in both $\mathscr{B}_{\rm red}(\bH,E)$
and $\mathscr{B}_{\rm red}(\bH^\sharp ,E)$.  Identifying these points determines our identification of reduced buildings.)

Given $x\in   \mathscr{B}_{\rm red}(\bH,E)$, we have an exact sequence
$$1\to \bN(E)\to \bH^\sharp(E)_x\to \bH (E)_x\to 1$$ of stabilizers of $x$, as well as exact sequences
$$1\to \bN(E)_r\to \bH^\sharp (E)_{x,r}\to \bH(E)_{x,r}\to 1$$ for all $r\ge 0$.
There are  similar exact sequences 
$$1\to N\to H^\sharp_x\to H_x\to 1$$ and $$1\to N_r\to H^\sharp_{x,r}\to H_{x,r}\to 1$$ 
for $F$-rational points.
(See \cite[Lemma 3.5.3]{KalYu}.)

\medskip
\begin{proposition}
Suppose $\rho$ is a representation of $H_x$ and $\rho^\sharp$ is its pullback to a representation of $H^\sharp_x$.  Then $\rho$ is permissible if and only if $\rho^\sharp$ is.  The representation $\pi^\sharp$ of $G^\sharp$ associated to $\rho^\sharp$ is precisely the pullback of the representation $\pi$ of $G$ associated to $\rho$.  The correspondence $\pi^\sharp\leftrightarrow \pi$ gives a bijection between the representations of $G$ that come from applying our construction to $G^\sharp$ and the representations that come from applying our construction directly to $G$.
\end{proposition}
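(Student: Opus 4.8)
The plan is to reduce everything to the compatibility of the entire construction with the surjection $G^\sharp\to G$ and its restrictions $H^\sharp_x\to H_x$. The key observation is that the construction is built from data — the character $\phi$ of $H_{x,0+}$, the twisted Levi sequence $\vec\bG$, the depths $\vec r$, the dual cosets, the groups $J^{i+1}$, the Heisenberg representations $\tau_i$ — all of which pull back along $\bN$ in a transparent way because $\bN$ is a central induced torus contained in $\bG^\sharp$ but also in $\bH^\sharp$ and (since $\bN$ is central) in every intermediate group $\bG^{\sharp,i}$. First I would record that the preimage $\bG^{\sharp,i}$ of $\bG^i$ in $\bG^\sharp$ is again an $E$-Levi subgroup containing $\bH^\sharp$, that $\bG^{\sharp,i}_{\rm der}=\bG^i_{\rm der}$ canonically (the derived groups are unaffected by the central extension, as already noted in the excerpt for $\bH^\sharp_{\rm der}$ versus $\bH_{\rm der}$), and hence that $\pi_1(\bG^{\sharp,i}_{\rm der})=\pi_1(\bG^i_{\rm der})$ — this is harmless precisely because we only need $p\nmid|\pi_1(\bG_{\rm der})|$ for the clean version, and that hypothesis is insensitive to the $z$-extension anyway. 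Consequently the recursion of \S\ref{sec:recovery} applied to $\bG^\sharp$ with the pulled-back character $\phi^\sharp$ produces the pulled-back sequence $\vec\bG^\sharp$ with the \emph{same} $\vec r$.

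Next I would verify the permissibility equivalence, condition by condition in Definition \ref{permissibledef}. Condition (1): $\rho^\sharp$ induces irreducibly on $H^\sharp$ iff $\rho$ does, since induction commutes with inflation along $N\triangleleft H^\sharp$ with $H^\sharp/N\cong H$ and $N$ acts trivially on everything in sight; irreducibility is preserved because $N$ is contained in the center acting by a character. Condition (2): the restriction of $\rho^\sharp$ to $H^\sharp_{x,0+}$ is a multiple of $\phi^\sharp={\rm inf}(\phi)$, using the surjection $H^\sharp_{x,0+}\to H_{x,0+}$ from \cite[Lemma 3.5.3]{KalYu}. Condition (3): $\phi^\sharp$ is trivial on $H^{\sharp,\flat}_{{\rm der},x,0+}$ iff $\phi$ is trivial on $H^\flat_{{\rm der},x,0+}$, because $\bH^\sharp_{\rm sc}=\bH_{\rm sc}$ (the simply connected cover of the derived group is unchanged) so these two subgroups are literally identified under the derived-group identification. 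Condition (4): the tori $\bZ^{\sharp,i,i+1}$ need care — the center of $\bG^{\sharp,i}$ is larger than that of $\bG^i$ by a factor of $\bN$, but $\bZ^{i,i+1}=(\bG^{i+1}_{\rm der}\cap\bZ^i)^\circ$ only involves the intersection with the derived group, so I expect a canonical identification $\bZ^{\sharp,i,i+1}\cong\bZ^{i,i+1}$ and hence an identification of dual cosets and of the $\bG^{i+1}$-genericity condition {\bf GE2}. This identification of the $\bZ^{i,i+1}$ tori — confirming that the central torus of the $z$-extension genuinely drops out of every $\bZ^{i,i+1}$ — is the step I expect to be the main obstacle, or at least the one requiring the most care with the structure theory.

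Having matched all the input data, I would then match the inducing subgroups and the characters: $K^\sharp=H^\sharp_x J^\sharp$ surjects onto $K=H_xJ$ with kernel $N$ (the groups $J^{\sharp,i+1}=(G^{\sharp,i},G^{\sharp,i+1})_{x,(r_i,s_i)}$ surject onto $J^{i+1}$ because $\bN$ is central, so $\bN\cap J^{\sharp,i+1}$ is captured and the derived-group parts agree), and similarly $K^\sharp_+\to K_+$ with $\hat\phi^\sharp={\rm inf}(\hat\phi)$. Since the symplectic spaces $W_i^\sharp=J^{\sharp,i+1}/J^{\sharp,i+1}_+$ are canonically identified with $W_i$ (the central $N$ cancels in the quotient) and the characters $\zeta_i$ agree, the Heisenberg groups $\mathscr{H}_i$, representations $\tau_i$, metaplectic lifts $\hat\tau_i$, and the maps $\omega_i$ all transport verbatim. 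Therefore the representation $\kappa^\sharp$ of $K^\sharp$ characterized by Theorem \ref{ourmainresult}(1)--(3) relative to $\rho^\sharp$ is exactly the inflation of $\kappa$, by the uniqueness clause of that theorem; inducing up, $\pi^\sharp={\rm ind}_{K^\sharp}^{G^\sharp}(\kappa^\sharp)$ is the inflation of $\pi={\rm ind}_K^G(\kappa)$ since induction commutes with inflation along $N\triangleleft G^\sharp$ with quotient $G$ and $N$ acting trivially. Finally, the asserted bijection between ``representations of $G$ coming from $G^\sharp$'' and ``representations coming directly from $G$'' is then immediate: every permissible $\rho^\sharp$ of $H^\sharp_x$ on which $N$ acts trivially is the pullback of a unique permissible $\rho$ of $H_x$ (as $H^\sharp_x/N=H_x$), a permissible representation of $H^\sharp_x$ yields a representation of $G^\sharp$ factoring through $G$ if and only if $N$ acts trivially on it, and under this correspondence $\pi^\sharp$ factors through $G$ to give precisely $\pi$; conversely every directly-constructed $\pi$ arises this way from $\rho^\sharp=\rho^\sharp(\rho)$.
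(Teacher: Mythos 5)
Your overall strategy --- tracing every ingredient of the construction ($\phi$, $\vec r$, $\vec\bG$, the dual cosets, the groups $J^{i+1}$, the Heisenberg data, $\kappa$, and finally $\pi$) through the surjection $G^\sharp\to G$ and observing that each one pulls back --- is the right one; the paper offers no proof of this proposition, so I am evaluating the proposal on its own terms.

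Several of the group-theoretic identifications you invoke are wrong as stated, though, and need repair before the argument reads. You assert that $\bG^{\sharp,i}_{\rm der}=\bG^i_{\rm der}$ ``canonically,'' attributing this to a remark in the paper; the paper actually only identifies the \emph{Lie algebras}. The correct statement is that $\bG^{\sharp,i}\to\bG^i$ restricts to the universal cover $\bG^{\sharp,i}_{\rm der}\to\bG^i_{\rm der}$, so $\bG^{\sharp,i}_{\rm der}=\bG^i_{\rm sc}$, which differs from $\bG^i_{\rm der}$ whenever the latter is not simply connected. Consequently the claim ``$\pi_1(\bG^{\sharp,i}_{\rm der})=\pi_1(\bG^i_{\rm der})$'' is false: the left side is always trivial, the right side need not be. The accompanying remark that the hypothesis $p\nmid|\pi_1(\bG_{\rm der})|$ ``is insensitive to the $z$-extension'' is exactly backwards --- the $z$-extension is engineered precisely so that $\pi_1(\bG^\sharp_{\rm der})=1$ holds automatically, whereas $p\nmid|\pi_1(\bG_{\rm der})|$ is the nontrivial hypothesis you must impose for the recursion of \S\ref{sec:recovery} to be runnable directly on $G$ and to produce the same $\vec r$ as the recursion on $G^\sharp$. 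That hypothesis also supplies the surjectivity of $H^{\sharp,i}_{x,r}\to H^i_{x,r}$ for $r>0$, without which the depth comparison at each step of the recursion fails --- the paper explicitly flags the potential failure of these surjections in the bad-$p$ case. The fix: state that the proposition is being proved for $p\nmid|\pi_1(\bG_{\rm der})|$ (when $p$ divides this order the paper \emph{defines} the construction on $G$ to be the one via $G^\sharp$, so there is nothing to match); replace the spurious equality of derived groups with the correct assertion that $\bG^{\sharp,i}_{\rm der}=\bG^i_{\rm sc}$, from which the identification of $\fr{z}^{i,i+1}$, of the Moy--Prasad filtrations on it, of the dual cosets, and of condition {\bf GE2} all follow at the Lie algebra level; and phrase condition (3) via images under the covering map rather than via a nonexistent identity of groups. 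The remainder of your argument --- conditions (1) and (2) of permissibility, the fibered structure of $K^\sharp\to K$, the canonical identification of the symplectic spaces $W_i$, and the final inflation/induction compatibility yielding the bijection --- is sound once these corrections are made.
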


To end this section, we note a complication that occurs for the group $\SL_n$.  Even though $\SL_n$ is simply connected, its dual ${\rm PGL}_n$ is not, and this introduces the issues discussed above in Remark \ref{conditionfourrem}.  Thus taking $z$-extensions does not remove all problems related to the residual characteristic.

\subsection{Commutators}

In this section, we collect a few facts about commutators.

Let $\bH (E)^+$ be the (normal) subgroup of $\bH (E)$ generated by the $E$-rational elements of the unipotent radicals of parabolic subgroups of $\bH$ that are defined over $E$. 

We recall now  some basic facts about $\bH (E)^+$.  (See  \cite{MR0164968} and \cite[\S7]{MR1278263}.)
Since  $E$ is a perfect field,   $\bH (E)^+$ may also be described as the group generated by the $E$-rational unipotent elements in $\bH (E)$.  
Since $E$ is a field with at least 4 elements,  any subgroup of $\bH (E)$ that is normalized by $\bH (E)^+$ either contains $\bH(E)^+$ or is central.  (This  follows from the main theorem of \cite{MR0164968}.)  
In particular, $\bH (E)^+\subset [\bH (E),\bH(E)]$.  But since $\bH$ is $E$-split, $\bH (E)/\bH(E)^+$ must be abelian and hence we deduce that
$$\bH (E)^+ = [\bH (E),\bH(E)].$$

\begin{lemma}\label{goodp}If $x\in \mathscr{B}_{\rm red}(\bH,F)$ then we have inclusions
\begin{eqnarray*}
H^\flat_{{\rm der},x,0+}&\subset& [H_{\rm der},H_{\rm der}]\cap H_{x,0+}\subset [H,H]\cap H_{x,0+}\\
&\subset& [\bH(E),\bH(E)]\cap H_{x,0+}
= \bH(E)^+ \cap H_{x,0+}\\
&\subset &H_{{\rm der},x,0+}.
\end{eqnarray*}
If the order of the fundamental group $\pi_1(\bH_{\rm der})$ is not divisible by $p$ then all of these inclusions are equalities.
\end{lemma}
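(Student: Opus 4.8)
The plan is to verify the four displayed inclusions one at a time and then analyze when equality holds throughout. The first inclusion $H^\flat_{\mathrm{der},x,0+}\subset [H_{\mathrm{der}},H_{\mathrm{der}}]\cap H_{x,0+}$ follows because $H^\flat_{\mathrm{der},x,0+}$ is by definition the image of $H_{\mathrm{sc},x,0+}$ in $\bH_{\mathrm{der}}$, and $\bH_{\mathrm{sc}}$ maps onto $[\bH_{\mathrm{der}},\bH_{\mathrm{der}}]=\bH_{\mathrm{der}}$; moreover the image lands in $H_{x,0+}$ because the identification of buildings $\mathscr{B}(\bH_{\mathrm{sc}},F)=\mathscr{B}(\bH_{\mathrm{der}},F)$ is compatible with Moy–Prasad filtrations. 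The second inclusion $[H_{\mathrm{der}},H_{\mathrm{der}}]\subset [H,H]$ is immediate from $H_{\mathrm{der}}=\bH_{\mathrm{der}}(F)\subset H$ (intersecting with $H_{x,0+}$ preserves the containment). The third, $[H,H]\subset [\bH(E),\bH(E)]$, is just functoriality of the commutator subgroup under the inclusion $H=\bH(F)\hookrightarrow \bH(E)$. The middle equality $[\bH(E),\bH(E)]=\bH(E)^+$ is exactly the displayed fact proven just above the lemma using the results of \cite{MR0164968}. The final inclusion $\bH(E)^+\cap H_{x,0+}\subset H_{\mathrm{der},x,0+}$ is the one real content point: an $E$-rational unipotent element of $\bH(E)$ lies in $\bH_{\mathrm{der}}(E)$ (unipotents are in the derived group), so $\bH(E)^+\subset\bH_{\mathrm{der}}(E)$; intersecting with $H_{x,0+}$ and using the intersection formula $\bH_{\mathrm{der}}(E)_{x,0+}\cap H = H_{\mathrm{der},x,0+}$ (together with the fact that $\bH(E)^+\cap H_{x,0+}$ consists of elements of $\bH_{\mathrm{der}}(E)$ that are in the prounipotent radical, since $H_{x,0+}=\bH(E)_{x,0+}\cap H$) gives the claim.

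For the equality statement, the plan is to show that when $p\nmid |\pi_1(\bH_{\mathrm{der}})|$ the two extreme terms $H^\flat_{\mathrm{der},x,0+}$ and $H_{\mathrm{der},x,0+}$ coincide, which forces all intermediate inclusions to be equalities. This amounts to showing that the map $H_{\mathrm{sc},x,0+}\to H_{\mathrm{der},x,0+}$ is surjective. I would argue this at the level of the prounipotent radicals of parahorics: over $F^{\mathrm{un}}$ (or after passing to $\fr{F}$), the kernel of $\bH_{\mathrm{sc}}\to\bH_{\mathrm{der}}$ is the finite central group scheme whose character group is (dual to) $\pi_1(\bH_{\mathrm{der}})$, and when $p$ does not divide its order this kernel is a group of order prime to $p$. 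Since $H_{\mathrm{der},x,0+}$ is a pro-$p$ group, any element of it lifts to $H_{\mathrm{sc},x,0+}$: a preimage in $\bH_{\mathrm{sc}}(\bar F)$ differs from any other by the central kernel, and one can adjust by a prime-to-$p$ central element to land in the pro-$p$ Moy–Prasad subgroup; Galois descent then gives an $F$-rational lift because $H^1$ of the Galois group with coefficients in the prime-to-$p$ central kernel vanishes against the pro-$p$ obstruction, or more directly because the relevant connecting map is between a pro-$p$ group and a prime-to-$p$ group and hence zero.

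The main obstacle I anticipate is precisely this last surjectivity argument: making rigorous the claim that "prime-to-$p$ central isogeny induces an isomorphism on pro-$p$ Moy–Prasad subgroups" requires care about whether one works over $F$, $F^{\mathrm{un}}$, or $\fr{F}$, and about the compatibility of the building identification with the isogeny. The cleanest route is probably to first establish surjectivity of $\bH_{\mathrm{sc}}(\fr{F})_{x,0+}\to \bH_{\mathrm{der}}(\fr{F})_{x,0+}$ using that both are successive extensions of root subgroups (and a torus part) that match up under the isogeny away from $p$, then descend to $F^{\mathrm{un}}$ and finally to $F$ using the $\mathrm{Gal}(\fr{F}/\fr{f})$- and $\mathrm{Gal}(E/F)$-equivariance together with triviality of the relevant $H^1$'s (tameness of $E/F$ and the prime-to-$p$ hypothesis). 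Everything else is routine bookkeeping with filtration intersection formulas of the type $\bG(E)_{y,r}\cap G=G_{y,r}$ already recalled in the paper.
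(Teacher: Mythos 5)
Your proof of the first inclusion $H^\flat_{{\rm der},x,0+}\subset [H_{\rm der},H_{\rm der}]\cap H_{x,0+}$ contains a genuine gap, and this is exactly the place where the paper's proof does most of its work. You write that the inclusion ``follows because $\bH_{\rm sc}$ maps onto $[\bH_{\rm der},\bH_{\rm der}]=\bH_{\rm der}$,'' but that is an equality of \emph{algebraic groups}, not a statement about rational points. The surjection $\xi:\bH_{\rm sc}\to\bH_{\rm der}$ on the level of group schemes gives no information whatsoever about whether $\xi (h)$, for $h\in H_{{\rm sc},x,0+}$, is a product of commutators of $F$-rational elements of $H_{\rm der}$. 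To see the issue concretely, take $\bH_{\rm der}=\bH_{\rm sc}=\SL_1(D)$ over $F$, $D$ a division algebra, so that $\xi$ is the identity. The assertion becomes $H_{{\rm der},x,0+}\subset[H_{\rm der},H_{\rm der}]$, which is a nontrivial theorem (in fact an equality, $[\SL_1(D),\SL_1(D)]=\SL_1(D)\cap(1+\mathfrak{P}_D)$, cited by the paper from Prasad--Raghunathan / Platonov--Rapinchuk). Your argument does not establish this. Similarly, for the isotropic factors one needs the Kneser--Tits--type fact that $H_i=H_i^+=[H_i,H_i]$ for $\bH_i$ simply connected, $F$-simple, and $F$-isotropic; again nothing in your first paragraph invokes this. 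The paper's proof decomposes $\bH_{\rm sc}=\prod_i\bH_i$ into $F$-simple factors, treats the isotropic factors via Kneser--Tits ($H_i=H_i^+$), treats the anisotropic factors via the $\SL_1(D)$ commutator computation, and only then assembles the inclusion. Your sketch skips all of this.

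The rest of your argument is in reasonable shape. The middle inclusions $[H_{\rm der},H_{\rm der}]\subset[H,H]\subset[\bH(E),\bH(E)]=\bH(E)^+$ are handled as in the paper, and your final inclusion $\bH(E)^+\cap H_{x,0+}\subset H_{{\rm der},x,0+}$ via $\bH(E)^+\subset\bH_{\rm der}(E)$ plus filtration compatibility is essentially what the paper does (the paper packages the whole chain of remaining inclusions as $[H_{\rm der},H_{\rm der}]\subset[H,H]\subset[\bH(E),\bH(E)]=\bH(E)^+\subset\bH_{\rm der}(E)$ and intersects with $H_{x,0+}$). For the equality statement under $p\nmid|\pi_1(\bH_{\rm der})|$, your outline of a direct cohomological/pro-$p$ argument for surjectivity of $H_{{\rm sc},x,0+}\to H_{{\rm der},x,0+}$ is plausible in spirit but is replaced in the paper simply by citing Lemmas 3.1.1 and 3.5.1 of Kaletha; if you wanted to make your version rigorous you would need to supply precisely the care you flag about building identifications and descent from $\fr{F}$ to $F$, which is what those lemmas encapsulate. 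The substantive defect to fix is the first inclusion.
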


\begin{proof}
The first step is to show  $H^\flat_{{\rm der},x,0+}\subset [H_{\rm der},H_{\rm der}]\cap H_{x,0+}$.
We observe that
the universal cover $\bH_{\rm sc}$ of $\bH_{\rm der}$ decomposes as a direct product
$$\bH_{\rm sc} = \prod_i \bH_i$$ of $F$-simple factors $\bH_i$, and,
for each $i$, we have $\bH_i = {\rm Res}_{F_i/F}\bH'_i$, where $\bH'_i$ is an absolutely simple, simply connected $F_i$-group and $F_i$ is a finite extension of $F$.  
(See \cite[\S3.1.2]{MR0224710}.)
To prove the desired inclusion, it suffices to show that for each $i$ we have $$\xi (H_{i,x,0+})\subset [\xi(H_i),\xi(H_i)],$$
where $\xi$ is the covering map $\bH_{\rm sc}\to \bH_{\rm der}$.

Suppose first that $\bH_i$ is $F$-isotropic or, equivalently, $\bH'_i$ is $F_i$-isotropic.  We have
$$H_i^+\subset [H_i,H_i]\subset H_i= H_i^+,$$ where the first inclusion follows from  \cite[Main Theorem]{MR0164968} and the equality follows from \cite[Theorem 7.6]{MR1278263}.  Therefore,
$$\xi (H_{i,x,0+}) \subset \xi([H_i,H_i]) \subset [\xi(H_i),\xi(H_i)].$$

Now suppose $\bH_i$ is $F$-anisotropic.   Then $H_i = \SL_1(D_i)$, where $D_i$ is a central division algebra over $F_i$.  According to \cite[Theorem 1.9]{MR1278263}, we have
$$[H_i,H_i]= H_i \cap (1+\mathfrak{P}_{D_i}) = H_{i,x,0+},$$ where $\mathfrak{P}_{D_i}$ is the maximal ideal in $D_i$.  Again, we obtain
   $\xi (H_{i,x,0+})\subset [\xi(H_i),\xi(H_i)]$ and hence $H^\flat_{{\rm der},x,0+}\subset [H_{\rm der},H_{\rm der}]\cap H_{x,0+}$.
   
The remaining inclusions follow from the inclusions
$$[H_{\rm der},H_{\rm der}]\subset [H,H]\subset [\bH(E),\bH(E)]
= \bH(E)^+ \subset\bH_{\rm der}(E).$$

Now suppose the order of the fundamental group $\pi_1(\bH_{\rm der})$ is not divisible by $p$.  The fact that $H^\flat_{{\rm der},x,0+}=  H_{{\rm der}, x,0+}$ is discussed in the proof of \cite[Lemma 3.5.1]{KalYu} and it follows from \cite[Lemma 3.1.1]{KalYu}.
 \end{proof}
 
\medskip
The previous proof uses standard methods that can also be found in \cite[\S7]{MR1278263} and in the proof of \cite[Lemma 3.5.1]{KalYu}.

There are several uses for the latter result.  
The fact that one often has $H^\flat_{{\rm der},x,0+}=  H_{{\rm der}, x,0+}$ allows one to replace $H^\flat_{{\rm der},x,0+}$ 
in the definition of ``permissible representation'' with a simpler object.   
The groups involving commutators are included partly to allow one to link 
our theory with Yu's theory, where quasicharacters of $H$ play more of a role.  In this regard, we note that a character of $H_{x,0+}$ extends to a quasicharacter of $H$ precisely when it is trivial on $[H,H]\cap H_{x,0+}$.  On the other hand,
the group $[H_{\rm der},H_{\rm der}]\cap H_{x,0+}$ is relevant to Kaletha's theory and, in particular, 
\cite[Lemma 3.5.1]{KalYu}.
Another application is the following:

\begin{proposition}
Suppose $\rho$ is a permissible representation of $H_x$ and $\chi$ is a quasicharacter of $G$.  Then $\rho\otimes (\chi|H_x)$ is also permissible.  The representation of $G$ associated to $\rho\otimes (\chi|H_x)$ is equivalent to $\pi \otimes \chi$, where $\pi$ is the representation of $G$ associated to $\rho$.
\end{proposition}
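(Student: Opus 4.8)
The plan is to show that replacing $\rho$ by $\rho':=\rho\otimes(\chi|H_x)$ changes only the underlying character $\phi':=\phi\cdot(\chi|H_{x,0+})$ of $H_{x,0+}$, and that this change is invisible to every ingredient of the construction of \S\ref{sec:construction} except for an overall twist by $\chi$. The engine is that a quasicharacter of $G$ kills $[G,G]$, hence everything ``derived''. First I would reduce to the case in which $\bG_{\rm der}$ is simply connected: by the $z$-extension compatibility of \S\ref{sec:zext}, pulling $\rho$ and $\chi$ back to $\bG^\sharp$ commutes with the tensor operation and with the construction, and both permissibility and the assignment $\rho\mapsto\pi(\rho)$ descend, so the statement for $\bG^\sharp$ implies it for $\bG$. (Alternatively one may retain the running assumption $p\nmid|\pi_1(\bG_{\rm der})|$; all that is needed below is that $\chi$ annihilate the positive-depth filtration subgroups of $\bG_{\rm der}(F)$, and for $r>0$ these sit inside the image of $\bG_{\rm sc}(F)$.) With $\bG_{\rm der}$ simply connected, the key point is that over the $p$-adic field $F$ the group $\bG_{\rm der}(F)=\bG_{\rm sc}(F)$ is perfect --- Kneser--Tits for the isotropic almost simple factors, and Wang's theorem $SK_1(D)=1$ for the anisotropic factors $\SL_1(D)$ --- so $\chi|\bG_{\rm der}(F)=1$, and therefore $\chi$ is trivial on each of $H^\flat_{{\rm der},x,0+}$, $T_{\mathscr{O},r}$, $T^i_{0+}$, $H^i_{x,r}$, $Z^{i,i+1}_{r_i}$, and $L^{i+1}_+=G^{i+1}_{\rm der}\cap J^{i+1}_{++}$, each being a subgroup of $\bG_{\rm der}(F)$ (the tori $\bT_{\mathscr{O}}$, generated by images of coroots, being contained in $\bG_{\rm der}$).

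Running the recursion of \S\ref{sec:recovery} with $\phi'$ in place of $\phi$ then returns the \emph{same} twisted Levi sequence $\vec\bG=(\bG^0,\dots,\bG^d)$ and the \emph{same} depths $r_0<\cdots<r_{d-1}$, since $\phi'$ and $\phi$ agree on every $T^i_{0+}$ and on every relevant $H^{i-1}_{x,r}$; only $r_d=\text{depth}(\phi')$ may change, and $r_d$ enters neither $K=H_xJ$ nor $K_+$. Hence the groups $J^{i+1}$, $J^{i+1}_+$, $J^{i+1}_{++}$, and with them $K$ and $K_+$, are literally unchanged --- the summand $\ker\phi$ versus $\ker\phi'$ in the recipe for $L_+$ is absorbed into $H_{x,0+}$ and does not affect the group $K_+$. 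Moreover $\chi$ is trivial on $Z^{i,i+1}_{r_i}$, so $\phi'|Z^{i,i+1}_{r_i}=\phi|Z^{i,i+1}_{r_i}$; thus the dual cosets of Definition \ref{dualcosetdef} coincide with those for $\phi$, and therefore so do the characters $\zeta_i$, the symplectic modules $W_i$, the Heisenberg representations $\tau_i$ and their extensions $\hat\tau_i$, and the representations $\omega_i$ of $H_x$.

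Permissibility of $\rho'$ (Definition \ref{permissibledef}) is then quick: (1) compact induction commutes with twisting, so ${\rm ind}_{H_x}^H\rho'\cong({\rm ind}_{H_x}^H\rho)\otimes\chi$ is irreducible; (2) $\rho'|H_{x,0+}$ is a multiple of $\phi'$; (3) $\phi'$ is trivial on $H^\flat_{{\rm der},x,0+}$ since $\phi$ is (hypothesis) and so is $\chi$ (previous paragraph, or directly $H^\flat_{{\rm der},x,0+}\subseteq[H,H]\subseteq[G,G]$ by Lemma \ref{goodp}); (4) the dual cosets are those of the permissible $\rho$, hence contain elements satisfying {\bf GE2}. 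To finish I would feed $\kappa(\rho)\otimes(\chi|K)$ into the uniqueness clause of Theorem \ref{ourmainresult} as the candidate for $\kappa(\rho')$: its character has the same support as that of $\kappa(\rho)$, namely inside $H_xK_+$; its restriction to $K_+$ is a multiple of $\hat\phi\cdot(\chi|K_+)$, which equals $\hat\phi'$ because both are the character of $K_+$ that restricts to $\phi'=\phi\cdot\chi$ on $H_{x,0+}$ and is trivial on $L^1_+\cdots L^d_+$ (using $\chi|L^{i+1}_+=1$ and the well-definedness of $\hat\phi$ recalled in \S\ref{sec:hatphiwelldef}); and its restriction to $H_x$ is $(\rho\otimes\omega_0\otimes\cdots\otimes\omega_{d-1})\otimes(\chi|H_x)\cong\rho'\otimes\omega_0\otimes\cdots\otimes\omega_{d-1}$. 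By the uniqueness in Theorem \ref{ourmainresult} this yields $\kappa(\rho')\cong\kappa(\rho)\otimes(\chi|K)$, and then $\pi(\rho')={\rm ind}_K^G\kappa(\rho')\cong\big({\rm ind}_K^G\kappa(\rho)\big)\otimes\chi=\pi(\rho)\otimes\chi$ by the projection formula.

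The step I expect to be the main obstacle is the bookkeeping in the two middle paragraphs: one must verify that each object produced in \S\ref{sec:construction} --- the sequence $\vec\bG$, the depths $\vec r$, the groups $K$ and $K_+$, the dual cosets, and the Heisenberg data --- sees $\phi$ only through its restrictions to subgroups annihilated by $\chi$, so that the single surviving effect of the twist is the expected one. The one ingredient that is not literally a commutator subgroup is the torus $\bZ^{i,i+1}$, but it lies in $\bG_{\rm der}$, which is precisely what makes the argument close.
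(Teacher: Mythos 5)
Your proposal is correct and rests on the same underlying idea as the paper's proof, namely that a quasicharacter of $G$ is trivial on commutators and hence on everything ``derived,'' with permissibility reduced to the inclusions $H^\flat_{{\rm der},x,0+}\subset[H,H]\subset[G,G]$ of Lemma~\ref{goodp}; the paper's own proof is two lines, dispatching the twist assertion with ``follows directly from the definition of our construction,'' and you have simply unpacked that claim in full (the $\vec\bG$, $\vec r$, $K$, $K_+$, dual cosets, Heisenberg data being unchanged, and then feeding $\kappa(\rho)\otimes(\chi|K)$ into the uniqueness clause of Theorem~\ref{ourmainresult}). Your extra preliminary --- the $z$-extension reduction, or alternatively Kneser--Tits/Wang --- is not needed for the permissibility check itself, but it correctly flags that the ``unchanged ingredients'' part implicitly leans on the running hypothesis $p\nmid|\pi_1(\bG_{\rm der})|$, since $\chi|[G,G]=1$ alone does not immediately kill $T^i_{0+}$, $Z^{i,i+1}_{r_i}$, or $L^{i+1}_+$.
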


 \begin{proof}
Showing that $\rho\otimes (\chi|H_x)$ is  permissible amounts to showing that
$\chi | H^\flat_{{\rm der},x,0+}$ is trivial.  But this results from the inclusions
$$H^\flat_{{\rm der},x,0+}\subset [H,H]\subset [G,G],$$ which are implied by Lemma \ref{goodp}.  The second assertion follows directly from the definition of our construction.
 \end{proof}

\medskip
Next, we discuss commutators of groups associated to concave functions (as in the previous section) and, in particular, groups associated to admissible sequences.

Given two concave functions $f_1$ and $f_2$, a  function $f_1\vee f_2$ is defined in \cite[Definition B.1]{MR2431235} and it is shown in Lemma 5.22 
\cite{MR2431235} that, under modest restrictions, that $[G_{x,f_1}, G_{x,f_2}]\subset G_{x,f_1\vee f_2}$. (The latter result is a refinement of  \cite[Proposition 6.4.44]{MR0327923}.)

For groups associated to admissible sequences, Corollary 5.18 \cite{MR2431235} gives the following  simpler statement of the result in Lemma 5.22 \cite{MR2431235}.  (The notion of ``admissible sequence'' is more general in \cite{MR2431235} than in  \cite{MR1824988}.)

\bigskip
\begin{lemma}\label{AdlerSpice}
Given admissible sequences $\vec a = (a_0,\dots , a_d)$ and $\vec b = (b_0,\dots , b_d)$, 
then $[\vec G_{x,\vec a} , \vec G_{x,\vec b}]\subset \vec G_{x,\vec c}$, where 
$\vec c = (c_0,\dots , c_d)$, with 
$$c_j = 
\begin{cases}
\min\{ a_0+b_0,\dots , a_d+b_d\},&\text{if }j=0,\\ 
\min\{ a_j+m_b,m_a+b_j, a_{j+1}+b_{j+1},\dots , a_d+b_d\},&\text{if }j>0,
\end{cases}
$$
and $m_a = \min \{ a_0,\dots , a_d\}$ and $m_b = \min \{ b_0,\dots , b_d\}$.
 \end{lemma}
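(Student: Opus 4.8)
The plan is to reduce this to the commutator estimate for groups attached to concave functions recorded in \cite{MR2431235}, since an admissible sequence is just a compact way of encoding a particular concave function. First I would set up the dictionary: to the tamely ramified twisted Levi sequence $\vec\bG = (\bG^0,\dots,\bG^d)$ and an admissible sequence $\vec a = (a_0,\dots,a_d)$ one attaches the function $f_{\vec a}$ on $\Phi(\bG^d,\bT) \cup \{0\}$ with $f_{\vec a}(0) = a_0$ and $f_{\vec a}(\alpha) = a_{j(\alpha)}$, where $j(\alpha)$ is the least index $j$ with $\alpha \in \Phi(\bG^j,\bT)$; this is concave precisely because the $\bG^j$ form an increasing chain of $E$-Levi subgroups (so the $\Phi(\bG^j,\bT)$ are an increasing chain of closed symmetric subsystems) and $\vec a$ is admissible, and by construction $\vec G_{x,\vec a} = G_{x,f_{\vec a}}$. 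I would do the same for $\vec b$ and $\vec c$. Granting this, Lemma 5.22 of \cite{MR2431235} (whose hypotheses hold in the present tame setting, and which refines \cite[Proposition 6.4.44]{MR0327923}) gives immediately $[G_{x,f_{\vec a}}, G_{x,f_{\vec b}}] \subset G_{x,\, f_{\vec a}\vee f_{\vec b}}$, where $f_1 \vee f_2$ is the concave function of \cite[Definition B.1]{MR2431235}.

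It then remains only to identify $f_{\vec a}\vee f_{\vec b}$ with $f_{\vec c}$ for the explicit $\vec c$ of the statement, which I would do by evaluating both functions on $0$ and on a root $\psi$ with $j(\psi)=j$. Unwinding Definition B.1, $(f_{\vec a}\vee f_{\vec b})(\psi)$ is an infimum over the decompositions $\psi = \psi_1+\psi_2$ into roots or $0$ of the sums $f_{\vec a}(\psi_1)+f_{\vec b}(\psi_2)$. On $\psi=0$ the relevant decompositions are $0=\alpha+(-\alpha)$, contributing $a_{j(\alpha)}+b_{j(\alpha)}$ since $j(\alpha)=j(-\alpha)$, together with $0=0+0$, contributing $a_0+b_0$; as $\alpha$ runs over $\Phi(\bG^d,\bT)$ these sweep out $\{a_0+b_0,\dots,a_d+b_d\}$, giving $c_0=\min\{a_0+b_0,\dots,a_d+b_d\}$. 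For a root $\psi$ with $j(\psi)=j>0$, in any decomposition $\psi=\psi_1+\psi_2$ at least one summand must sit at a level $\le j$ unless both sit at a common level $k>j$ (two roots at levels exceeding $j$ lie in the closed subsystem $\Phi(\bG^k,\bT)$ for the larger level $k$, so their sum does too); the three surviving families — one summand at level $\le j$ with the best case at level $j$ (yielding $a_j+m_b$), its mirror ($m_a+b_j$), and both summands at a level $k\in\{j+1,\dots,d\}$ (yielding $a_k+b_k$) — produce exactly the terms inside the minimum defining $c_j$.

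The one genuinely delicate point — and the reason I would ultimately just invoke \cite[Corollary 5.18]{MR2431235} rather than reproduce the argument in full — is this last case analysis: one must verify that the combinatorics of which pairs of roots in a chain of Levi subsystems (with ``level'' measured by the $\Phi(\bG^j,\bT)$) can add to a given root yields exactly the three families above, with nothing missing and nothing spurious, and that the degenerate $0+0$ term and the contributions reaching the Cartan part are correctly booked. Since \cite{MR2431235} carries this out at this level of generality (their notion of admissible sequence being more general than Yu's), the cleanest route is simply to record that this is Corollary 5.18 of \cite{MR2431235}.
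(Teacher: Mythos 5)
Your proposal lands exactly where the paper does: the paper offers no proof of this lemma, simply recording it as Corollary 5.18 of \cite{MR2431235} (itself a specialization of Lemma 5.22 there via the $f_{\vec a}\vee f_{\vec b}$ construction), and your ultimate conclusion is the same citation. Your preliminary sketch of how one would derive it is a fair account of what is behind that corollary, though, as you flag yourself, the fine bookkeeping is delicate — for instance, your two-term analysis does not actually reach the $m_a+b_j$ term when $m_a=a_k$ with $k>j$, since a decomposition $\psi=\psi_1+\psi_2$ with $j(\psi_2)=j\ge j(\psi)$ forces $\psi_1\in\Phi(\bG^{j},\bT)\cup\{0\}$ by closedness — so deferring to the cited corollary, as the paper itself does, is the right call.
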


\medskip
\begin{remark}
We frequently will use the previous lemma (explicitly and implicitly) in combination with the following observation: to show that one subgroup $H_1$ of $G$ normalizes another subgroup $H_2$  is equivalent to showing $[H_1,H_2]\subset H_2$.  
\end{remark}

\subsection{Twists of depth zero representations}\label{sec:HisG}

The purpose of this section is to examine conditions under which a permissible representation $\rho$ of $H_x$ can be expressed as a tensor product $\rho_0\otimes \chi$, where $\rho_0$ has depth zero and $\chi$ is a quasicharacter of $H_x$.  Such a decomposition is useful in applications because the depth zero representations $\rho_0$ have a simple description, according to the work of Moy and Prasad \cite{MR1253198, MR1371680}.  A secondary goal is to contrast our construction of supercuspidal representations with Yu's construction  in the special case in which $\bG = \bH$ or, equivalently, $d=0$.

We start with a definition:

\begin{definition}
A quasicharacter $\chi$ of $H_x$ is {\bf $H$-normal} if it
satisfies one of the following two equivalent conditions:
 \begin{itemize}
\item $\chi$ is trivial on $[H,H_x]\cap H_x$.
\item The intertwining space 
 $I_h (\chi) =
{\rm Hom}_{
hH_{x} h^{-1}\cap H_{x}
} 
({}^h\chi ,\chi)
$ 
is nonzero for all $h\in H$.
\end{itemize}

\end{definition}

The next result and its corollary have obvious proofs:

\medskip
\begin{lemma}
If $\rho$ is an irreducible, smooth representation of $H_x$ and $\chi$ is  an $H$-normal quasicharacter  of $H_x$ then there is a set-theoretic identity $$I_h(\rho) = I_h(\rho\otimes \chi),$$ for all $h\in H$, where
$$I_h(\rho)={\rm Hom}_{hH_{x} h^{-1}\cap H_{x}}({}^h\rho ,\rho).$$
\end{lemma}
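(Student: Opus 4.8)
The plan is to reduce the claimed equality $I_h(\rho) = I_h(\rho\otimes\chi)$ to the single assertion that ${}^h\chi$ and $\chi$ restrict to the \emph{same} character of the group $hH_xh^{-1}\cap H_x$. Granting that, the conclusion is immediate: since $\chi$ is one-dimensional we have $V_{\rho\otimes\chi} = V_\rho$, and for a linear endomorphism $T$ of $V_\rho$ the relation $T\circ {}^h\rho(g) = \rho(g)\circ T$ holds for every $g\in hH_xh^{-1}\cap H_x$ if and only if $T\circ {}^h(\rho\otimes\chi)(g) = (\rho\otimes\chi)(g)\circ T$ does, the common scalar factor ${}^h\chi(g) = \chi(g)$ cancelling from both sides. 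Hence $I_h(\rho)$ and $I_h(\rho\otimes\chi)$ are literally the same subspace of ${\rm Hom}(V_\rho,V_\rho)$, which is stronger than a mere isomorphism.

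It then remains to check that ${}^h\chi$ agrees with $\chi$ on $hH_xh^{-1}\cap H_x$. First I would fix the convention that ${}^h\chi$ is the character of $hH_xh^{-1}$ given by $g\mapsto\chi(h^{-1}gh)$, and likewise for ${}^h\rho$, before restricting to the intersection. For $g$ in this intersection, set $g' = h^{-1}gh$; by definition of the intersection both $g$ and $g'$ lie in $H_x$, and $({}^h\chi)(g) = \chi(g')$. Now $g(g')^{-1} = hg'h^{-1}(g')^{-1} = [h,g']$ lies in $[H,H_x]$, while it also lies in $H_x$ because $g,g'\in H_x$; hence $g(g')^{-1}\in [H,H_x]\cap H_x$. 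Since $\chi$ is $H$-normal it is trivial on this group, so $\chi(g) = \chi(g') = ({}^h\chi)(g)$, as required.

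There is no genuine obstacle here; this is one of the ``obvious proofs'' advertised in the text. The only things needing a little care are purely bookkeeping: keeping straight which group ${}^h\rho$ and ${}^h\chi$ are representations of before restriction, and recording that tensoring by a one-dimensional character leaves the underlying vector space unchanged, so that the final statement is an honest set-theoretic equality of Hom-spaces rather than just a canonical isomorphism. I would also note that the same computation (Steps as above, with $\rho$ suppressed) is what makes the two formulations of $H$-normality in the preceding definition equivalent, so the lemma and that equivalence can be handled together.
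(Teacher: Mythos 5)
Your proof is correct and fills in exactly the argument the paper declares ``obvious'': $H$-normality (in the form $\chi|_{[H,H_x]\cap H_x}=1$) gives ${}^h\chi=\chi$ on $hH_xh^{-1}\cap H_x$ via the commutator identity $g(h^{-1}gh)^{-1}=[h,h^{-1}gh]$, and then the nonzero scalar ${}^h\chi(g)=\chi(g)$ cancels from the intertwining relation, yielding a literal equality of Hom-spaces. This is the same (and essentially the only) route the paper intends.
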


\medskip
\begin{corollary}\label{corinter}
If $\rho$ is an irreducible, smooth representation of $H_x$ and $\chi$ is  an $H$-normal quasicharacter  of $H_x$ then  the representation ${\rm ind}_{H_{x}}^H(\rho)$ is irreducible if and only if ${\rm ind}_{H_{x}}^H(\rho \otimes \chi)$ is irreducible.
\end{corollary}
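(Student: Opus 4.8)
The plan is to reduce the question of irreducibility of an induced representation to a statement about intertwining spaces, and then quote the preceding lemma. First I would recall the standard criterion (Mackey's irreducibility criterion for compact-mod-center induction, as used throughout \cite{MR1824988} and \cite{MR2431732}): if $\rho$ is an irreducible smooth representation of the open, compact-mod-center subgroup $H_x$ of $H$, then $\mathrm{ind}_{H_x}^H(\rho)$ is irreducible if and only if the intertwining space $I_h(\rho) = \mathrm{Hom}_{hH_xh^{-1}\cap H_x}({}^h\rho,\rho)$ vanishes for every $h\in H\setminus H_x$. (One also uses here that $H_x$ is its own normalizer modulo the center, or more precisely that the relevant double cosets are parametrized suitably; but since $\rho$ already satisfies $I_1(\rho)=\mathrm{End}_{H_x}(\rho)=\C$ by Schur, the content is entirely in the $h\notin H_x$ terms.)

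Next I would invoke the hypothesis that $\chi$ is $H$-normal, so that the previous Lemma applies to give a \emph{set-theoretic} equality $I_h(\rho) = I_h(\rho\otimes\chi)$ for all $h\in H$. In particular $I_h(\rho)=0$ if and only if $I_h(\rho\otimes\chi)=0$, for each individual $h$. Note that $\rho\otimes\chi$ is again irreducible (tensoring an irreducible representation by a one-dimensional one preserves irreducibility), so the Mackey criterion applies equally to it.

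Combining these two observations finishes the argument: $\mathrm{ind}_{H_x}^H(\rho)$ is irreducible $\iff$ $I_h(\rho)=0$ for all $h\in H\setminus H_x$ $\iff$ $I_h(\rho\otimes\chi)=0$ for all $h\in H\setminus H_x$ $\iff$ $\mathrm{ind}_{H_x}^H(\rho\otimes\chi)$ is irreducible. There is essentially no obstacle here — the work was already done in the lemma, whose own proof is the routine check that conjugation by $h$ followed by twisting by a character that is trivial on $[H,H_x]\cap H_x$ does not change which maps are $hH_xh^{-1}\cap H_x$-equivariant. The only point requiring any care is making explicit that $hH_xh^{-1}\cap H_x$ for $h\in H$ is precisely the subgroup on which the $H$-normality condition $\chi|_{[H,H_x]\cap H_x}=1$ bites; this is why the definition of $H$-normal is phrased in terms of $[H,H_x]\cap H_x$ rather than $[H_x,H_x]$, and it is exactly the content of the equivalence of the two bullet points in the definition. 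Hence the corollary is immediate from the lemma, which is why the text calls the proof ``obvious.''
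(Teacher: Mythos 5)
Your argument is correct and is precisely what the paper has in mind: the corollary is declared to have an ``obvious proof'' because it is an immediate consequence of the preceding lemma (the set-theoretic identity $I_h(\rho)=I_h(\rho\otimes\chi)$ for all $h\in H$) combined with the Mackey irreducibility criterion, which the paper itself states and uses in \S3.11. Your added remark about why the condition is phrased via $[H,H_x]\cap H_x$ is a sensible elaboration but does not change the route.
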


The latter result raises the question of when, given a permissible representation $\rho$ of $H_x$, one can twist $\rho$ by an $H$-regular quasicharacter to obtain a depth zero representation, that is, a representation of $H_x$ whose restriction to $H_{x,0+}$ is a multiple of the trivial representation.
This is answered in the following:

\begin{lemma}
Suppose $\rho$ is a permissible representation of $H_x$ that restricts on $H_{x,0+}$ to the character $\phi$.  Then the following are equivalent:
\begin{itemize}
\item There exists an $H$-normal quasicharacter $\chi$ of $H_x$ and a depth zero representation $\rho_0$ of $H_x$ such that $\rho = \rho_0\otimes \chi$.
\item $\phi$ extends to an $H$-normal quasicharacter $\chi$ of $H_x$.
\item $\phi |([H,H_x]\cap H_{x,0+})=1$.
\end{itemize}
\end{lemma}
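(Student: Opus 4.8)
The plan is to prove the three conditions equivalent by a cyclic chain of implications, with the middle statement serving as a convenient waystation. Before starting, I would record the elementary algebraic fact underlying everything: if $\chi$ is a quasicharacter of $H_x$, then $\chi$ is $H$-normal (i.e., trivial on $[H,H_x]\cap H_x$) exactly when it is trivial on the commutators $[h, k]$ with $h\in H$, $k\in H_x$ that happen to land in $H_x$; and a quasicharacter is automatically trivial on commutators with elements of $H_{x,0+}$ once one knows it is trivial on $[H,H_x]\cap H_{x,0+}$, because $H_{x,0+}$ is a pro-$p$ group on which the relevant filtration pieces behave well. This is what makes the passage from ``trivial on $[H,H_x]\cap H_{x,0+}$'' to ``$H$-normal'' possible rather than just ``trivial on $[H,H_x]\cap H_x$''.

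First I would prove (first)$\Rightarrow$(second): if $\rho = \rho_0\otimes\chi$ with $\rho_0$ depth zero and $\chi$ an $H$-normal quasicharacter of $H_x$, then restricting to $H_{x,0+}$ and using that $\rho_0|H_{x,0+}$ is a multiple of the trivial character forces $\phi = \chi|H_{x,0+}$, so $\phi$ visibly extends to the $H$-normal quasicharacter $\chi$. Next, (second)$\Rightarrow$(third) is immediate: if $\phi = \chi|H_{x,0+}$ for an $H$-normal $\chi$, then $\chi$ is trivial on $[H,H_x]\cap H_x$, hence in particular on $[H,H_x]\cap H_{x,0+}$, and on that subgroup $\chi$ agrees with $\phi$. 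The substantive implication is (third)$\Rightarrow$(second): starting from $\phi|([H,H_x]\cap H_{x,0+})=1$, I must build an $H$-normal quasicharacter $\chi$ of $H_x$ extending $\phi$. The idea is that $\phi$, being trivial on $[H,H_x]\cap H_{x,0+}$, descends to a character of $H_{x,0+}/([H,H_x]\cap H_{x,0+})$; one then extends this (using that $H_x/H_{x,0+}$ is a finite group, so extensions of characters from the pro-$p$ normal subgroup $H_{x,0+}$ exist after possibly enlarging to an isoclinic situation — more precisely, one picks any extension of $\phi$ to $H_x$, which exists because $H_{x,0+}$ is normal in $H_x$ and the obstruction lies in a cohomology group that vanishes since we may twist by characters of the finite quotient) and then argues that any such extension, or a suitable modification of it, is automatically $H$-normal. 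The cleanest route is: choose any quasicharacter $\chi$ of $H_x$ with $\chi|H_{x,0+} = \phi$ (existence: extend along the abelian quotient $H_x/[H_x,H_x]H_{x,0+}$ after checking $\phi$ is trivial on $[H_x,H_x]\cap H_{x,0+}\subset[H,H_x]\cap H_{x,0+}$), then show $\chi$ is trivial on $[H,H_x]\cap H_x$ by decomposing $[H,H_x]\cap H_x$ into its depth-zero part and its positive-depth part and checking each separately.

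Once (third)$\Rightarrow$(second) is in hand, I would close the loop with (second)$\Rightarrow$(first): given an $H$-normal $\chi$ extending $\phi$, set $\rho_0 = \rho\otimes\chi^{-1}$; then $\rho_0|H_{x,0+} = (\phi)(\phi^{-1})$ is trivial, so $\rho_0$ has depth zero, and $\rho = \rho_0\otimes\chi$ as required. The permissibility hypothesis enters only to guarantee condition (2) of Definition \ref{permissibledef}, namely that $\rho|H_{x,0+}$ really is a multiple of a single character $\phi$, which is what lets us speak of $\phi$ at all; the other permissibility conditions play no role here. The main obstacle is the implication (third)$\Rightarrow$(second), specifically controlling the positive-depth part of $[H,H_x]\cap H_x$: one needs that the commutator $[H,H_x]$ meets $H_{x,0+}$ in exactly the group cut out by the condition in (third), which is where the commutator estimates of the preceding section (Lemma \ref{AdlerSpice} and the surrounding discussion, together with Lemma \ref{goodp}) and the pro-$p$ structure of $H_{x,0+}$ must be invoked to transfer triviality from the depth-$0+$ level up to all of $[H,H_x]\cap H_x$.
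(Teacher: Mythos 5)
The easy implications are handled correctly, and the overall cyclic plan is sound, but the route you take for the substantive implication (third)$\Rightarrow$(second) has a genuine gap. You propose to pick an arbitrary quasicharacter $\chi$ of $H_x$ extending $\phi$ (possible since $\phi$ is trivial on $[H_x,H_x]\cap H_{x,0+}$) and then ``show $\chi$ is trivial on $[H,H_x]\cap H_x$ by decomposing into depth-zero and positive-depth parts.'' But an \emph{arbitrary} extension need not be $H$-normal: two extensions of $\phi$ differ by a quasicharacter of $H_x/H_{x,0+}$, and unless one knows the inclusion $[H,H_x]\cap H_x \subset [H_x,H_x]\, H_{x,0+}$ (which you never argue and which is not obviously true, since $[H,H_x]$ involves conjugation by elements $h\in H$ that need not lie in $H_x$), there is no reason such a twist would remain trivial on the depth-zero image of $[H,H_x]\cap H_x$. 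Your closing paragraph also misidentifies the hard part as ``controlling the positive-depth part'' --- but that part, $[H,H_x]\cap H_{x,0+}$, is exactly where the hypothesis (third) directly applies; it is the depth-zero image in $H_x/H_{x,0+}$ where the difficulty actually lives. (You also assert incorrectly that $H_x/H_{x,0+}$ is finite; since $H_x$ is compact modulo center and $H_{x,0+}$ is compact, this quotient is in general compact-mod-center but not finite.)

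The cleaner and actually correct route, still short, is the following: set $N = H_{x,0+}\cdot\bigl([H,H_x]\cap H_x\bigr)$; both factors are normal in $H_x$, so $N$ is a normal subgroup. Define $\psi:N\to\C^\times$ by $\psi(h_+ a) = \phi(h_+)$ for $h_+\in H_{x,0+}$, $a\in [H,H_x]\cap H_x$. This is well-defined and multiplicative precisely because of (third), namely $\phi|([H,H_x]\cap H_{x,0+})=1$. By construction $\psi$ is trivial on $[H,H_x]\cap H_x$, and in particular on $[H_x,H_x]\cap N$, so $\psi$ factors through a subgroup of the abelian group $H_x/[H_x,H_x]$ and hence (by divisibility of $\C^\times$) extends to a quasicharacter $\chi$ of $H_x$. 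Then $\chi$ is $H$-normal because $\chi|N = \psi$ already vanishes on $[H,H_x]\cap H_x$, and it extends $\phi$ because $\chi|H_{x,0+}=\psi|H_{x,0+}=\phi$. This sidesteps any need to prove $[H,H_x]\cap H_x\subset [H_x,H_x]H_{x,0+}$.
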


For Yu's tame supercuspidal representations, the conditions in the previous lemma can be replaced by the following more restrictive conditions:
\begin{itemize}
\item There exists a  quasicharacter $\chi$ of $H$ and a depth zero representation $\rho_0$ of $H_x$ such that $\rho = \rho_0\otimes (\chi |H_x)$.
\item $\phi$ extends to a  quasicharacter $\chi$ of $H$.
\item $\phi |([H,H]\cap H_{x,0+})=1$.
\end{itemize}

\bigskip
\begin{lemma}\label{goodptwo}
If $p$ does not divide the order of $\pi_1(\bH_{\rm der})$ then every permissible representation $\rho$ of $H_x$ may be expressed as $\rho_0\otimes (\chi|H_x)$, where $\rho_0$ is a depth zero representation of $H_x$ and $\chi$ is a quasicharacter of $H$.
\end{lemma}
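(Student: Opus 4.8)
The plan is to deduce this from condition (3) of Definition \ref{permissibledef} together with Lemma \ref{goodp} and the standard ``twist by a quasicharacter'' argument recorded just before the statement.

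First I would observe that the hypothesis $p\nmid|\pi_1(\bH_{\rm der})|$ puts us in the case of Lemma \ref{goodp} in which all the displayed inclusions are equalities; in particular
$$H^\flat_{{\rm der},x,0+} = [H,H]\cap H_{x,0+}.$$
Since $\rho$ is permissible, its restriction to $H_{x,0+}$ is a multiple of a character $\phi$ which, by condition (3), is trivial on $H^\flat_{{\rm der},x,0+}$, hence on $[H,H]\cap H_{x,0+}$. As recalled in the discussion after Lemma \ref{goodp}, a character of $H_{x,0+}$ extends to a quasicharacter of $H$ precisely when it is trivial on $[H,H]\cap H_{x,0+}$ (such a $\phi$ factors through the image of $H_{x,0+}$ in the abelian group $H/[H,H]$ and extends, e.g. by Pontryagin duality for that locally compact abelian group). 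So I would choose a quasicharacter $\chi$ of $H$ with $\chi|H_{x,0+} = \phi$.

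Finally I would set $\rho_0 = \rho\otimes(\chi^{-1}|H_x)$. This is again a smooth irreducible representation of $H_x$; its restriction to $H_{x,0+}$ is a multiple of $\phi\cdot\phi^{-1}=1$, so $\rho_0$ has depth zero; and evidently $\rho = \rho_0\otimes(\chi|H_x)$, as required. I expect the only nonroutine point to be the first step---identifying $H^\flat_{{\rm der},x,0+}$ with $[H,H]\cap H_{x,0+}$ under the hypothesis on $p$---which is exactly what Lemma \ref{goodp} supplies; the passage from a character of $H_{x,0+}$ that is trivial on $[H,H]\cap H_{x,0+}$ to a depth zero twist is the same argument underlying the ``more restrictive conditions'' list preceding the lemma.
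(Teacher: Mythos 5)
Your proof is correct and follows exactly the same route as the paper: invoke Lemma \ref{goodp} to identify $H^\flat_{{\rm der},x,0+}$ with $[H,H]\cap H_{x,0+}$ under the hypothesis on $p$, extend $\phi$ to a quasicharacter $\chi$ of $H$, and twist by $\chi^{-1}$ to produce the depth-zero $\rho_0$. The only difference is that you spell out the (standard) extension argument that the paper leaves implicit.
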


\begin{proof}
Under the condition on $p$, Lemma \ref{goodp} implies that $H^\flat_{{\rm der},x,0+} = [H,H]\cap H_{x,0+}$.  Therefore the character $\phi$ of $H_{x,0+}$ associated to $\rho$ extends to a quasicharacter $\chi$ of $H$.  The representation $\rho_0 = \rho\otimes (\chi^{-1}|H_x)$ has depth zero.
\end{proof}

\medskip
\begin{remark}\label{newdequalszeroreps}
If $p$ divides the order of $\pi_1(\bH_{\rm der})$, then the possibility that $H^\flat_{{\rm der},x,0+}$ is strictly contained in $[H,H]\cap H_{x,0+}$
opens up the possibility of new supercuspidal representations  not captured by Yu's construction.
\end{remark}

We now compare our theory with Yu's theory in the case of $\bG = \bH$.  In this case, both theories minimally extend the Moy-Prasad theory of depth zero supercuspidal representations.

Let $\mathfrak{f}$ be the residue field of $F$, viewed as a subfield of the residue field $\mathfrak{F}$ of the maximal unramified extension of $F^{\rm un}$ of $F$ contained in $\overline{F}$.
Let $\mathsf{G}_x$ be the $\mathfrak{F}$-group with $\mathsf{G}_x (\mathfrak{F}) = \bG(F^{\rm un})_{x,0:0+}$ and the natural Galois action.  The group $\mathsf{G}_x (\mathfrak{f})$ is identified with $G_{x,0:0+}$.

Recall from \cite{MR1253198, MR1371680} that if $\rho^\circ$ is a representation of $G_{x,0}$ then
 $(G_{x,0},\rho^\circ)$ is called {\it a depth zero minimal $K$-type} 
 if it is the inflation of an irreducible cuspidal representation  of $G_{x,0:0+} = \mathsf{G}_x (\mathfrak{f})$.
 
 According to \cite[Proposition 6.8]{MR1371680} and \cite[Lemma 3.3]{MR1824988}, every irreducible, depth zero, supercuspidal representation $\pi$ of $G$ has the form ${\rm ind}_{G_{x}}^{G} (\rho_0)$, where $\rho_0$ is a smooth, irreducible representation of $G_{x}$ whose restriction to $G_{x,0+}$ is a multiple of the trivial representation and whose restriction to $G_{x,0}$ contains a depth zero minimal $K$-type.
 
 In fact,  every smooth, irreducible representation of $G_{x}$ that induces $\pi$ must contain a depth zero minimal $K$-type, since any two representations of $G_{x}$ that induce $\pi$ must be conjugate by an element of $G$ that normalizes $G_{x}$.

Yu's tame supercuspidal representations $\pi$ in the special case of $\bG =\bH$  are constructed as follows.    One starts with a pair $(\rho_0  ,\chi)$, where $\rho_0|G_{x,0+}$ is 1-isotypic,  $\rho_0$ induces an irreducible representation of $G$, and $\chi$ is any quasicharacter of $G$ and then one puts $$\pi = \ind_{G_x}^G (\rho_0\otimes (\chi |G_x)).$$
 Consider now the representation $\rho =\rho_0 \otimes (\chi|G_x)$.  In order for  $\rho$ to be  permissible, it must satisfy the following:
\begin{itemize}
\item[(1)] $\rho$ induces an irreducible representation of $G$,
\item[(2)] $\rho|G_{x,0+}$ is a multiple of some character $\phi$, 
\item[(3)] $\phi| H^\flat_{{\rm der},x,0+}=1$.
\end{itemize}

Condition (2) is satisfied with $\phi = \chi|G_{x,0+}$. 
Let $\chi^\sharp$ be the pullback of $\chi$ to a quasicharacter of a $z$-extension $G^\sharp$ of $G$.  Then Lemma 3.5.1 \cite{KalYu} implies that $\chi^\sharp$ is trivial on $G^\sharp_{{\rm der},x,0+}$.  Condition  (3) follows.  But the restriction of $\chi$ to $G_x$ is $G$-normal and hence condition (1) is satisfied by Corollary \ref{corinter}.  So $\rho$ must be permissible.

It is now evident  that when $\bG=\bH$  our framework captures all of Yu's supercuspidal representations for which $d=0$, and perhaps more in view of Remarks \ref{diszerorem} and \ref{newdequalszeroreps}.  

At first glance, it appears that  conditions (2) and (3) in the definition of ``permissible representation'' (Definition \ref{permissibledef}) should be eliminated since  they are either unnecessary or they reduce the number of supercuspidal representations constructed.  On the other hand, using condition (1) alone yields a theory with no content.
When $\bH$ is a proper subgroup of $\bG$, it will be readily apparent how all three conditions are needed to construct supercuspidal representations.

\subsection{Root space decompositions and exponential maps}\label{sec:expsection}

In this section, with $F$ as usual, $\bG$ can be taken as any connected reductive group that splits over $E$.  We also fix a point $x$ in the reduced building $\mathscr{B}_{\rm red}(\bG,F)$.

Let $\mathscr{M}$ consist of the following data:
\begin{itemize}
\item $\bT$ a maximal $F$-torus that splits over $E$ and contains $x$ in its apartment $\mathscr{A}_{\rm red}(\bG,\bT,E)$,
\item a $\Z$-basis $\chi_1,\dots , \chi_n$ for the character group $X^*(\bT)$,
\item a linear ordering of $\Phi = \Phi (\bG,\bT)$.
\end{itemize}


The theory of mock exponential maps was developed by Adler \cite[\S1.5]{MR1653184} and our approach is adapted from his, together with theory from \cite{MR0327923} and \cite{MR2508720}.

For positive $r$ in $$\widetilde\R = \R \cup \{ s+\, :\, s\in \R\}\cup \{ \infty\},$$
the mock exponential map on $\fr{t}(E)_r$,
$$\mathscr{M}\text{-exp}: \fr{t} (E)_{r}\to\bT(E)_{r},$$ is defined  by the condition
$$\chi_i (\mathscr{M}\text{-exp} (X)) = 1+d\chi_i (X),$$ for all $X\in \fr{t}(E)_{r}$ and  $1\le i\le n$.

Next, for each root $a\in \Phi$, we have an exponential map  
$$\text{exp}: \fr{g}_a (E)_{x,r}\to\bU_a(E)_{x,r}.$$

\begin{remark}
Recall that $\bU_a (E)_{x,r}$ is the group (denoted $X_\alpha$ in  \cite[\S1.1]{MR546588}) that Tits associates to the affine function $\alpha$ that has  gradient $a$ and satisfies $\alpha (x) =r$.  Thus $\bU_a(E)_{x,r} = \exp(E_{\alpha (*)}X_a)$, where $*$ is the point in $\mathscr{A}_{\rm red}(\bG,\bT,E)$ that Tits associates to some Chevalley basis that contains the root vector $X_a\in \fr{g}_a$.  Since  $\alpha (*)  = r-\langle x-*,a\rangle$, we have $$\bU_a (E)_{x,r}  = \exp (\fr{g}_a (E)_{x,r}),$$
where $$\fr{g}_a (E)_{x,r} = E_{r-\langle x-*,a\rangle} X_a.$$
\end{remark}

\bigskip
Now fix a function
$$f: \Phi \cup \{ 0\} \to (0,\infty )$$ that is concave in the sense that
$$f\left( \sum_i a_i\right) \le \sum_i f(a_i),$$ where we are summing over a set of elements of $\Phi\cup\{0\}$ whose sum is also in $\Phi \cup\{ 0\}$.

Then Bruhat-Tits consider the multiplication map
$$\mu : \bT(E)_{f(0)}\times \prod_{a\in \Phi}\bU_a(E)_{x,f(a)}\to \bG (E)_{x,f},$$
where the product is taken according some fixed (but arbitrary) ordering of $\Phi$, and $\bG(E)_{x,f}$ is the group generated by $\bT(E)_{f(0)}$ and the $\bU_a(E)_{x,f(a)}$'s.
It is shown in \cite[Proposition 6.4.48]{MR0327923} that $\mu$ is bijective.  (See also \cite[Lemma 5.22]{MR2431235}.)  We will consider the map $\mu$ associated to the ordering of $\Phi$ coming from $\mathscr{M}$.

\medskip
\begin{remark}
We refer the reader to \cite[\S2.25]{MR2508720} for more details on our application of \cite[Proposition 6.4.48]{MR0327923}.  Yu explains that the choice of a valuation of the root datum, in the sense of \cite{MR0327923}, is equivalent to the choice of a point in the reduced building.  In particular, it determines filtrations of the various root groups.  So it should be understood that when we apply the Bruhat-Tits result, we are choosing the valuation of root data corresponding to our given point $x$.
\end{remark}

The corresponding Lie algebra map 
$$\nu : \fr{t}(E)_{f(0)}\times \prod_{a\in \Phi}\fr{g}_a(E)_{x,f(a)}\to \fr{g} (E)_{x,f},$$
is obviously bijective and, in fact, is an $E$-linear isomorphism.

The mock exponential map
$$\mathscr{M}\text{-exp}: \fr{g} (E)_{x,f}\to\bG(E)_{x,f}$$
is defined as the composite
$$\xymatrix{ \fr{g} (E)_{x,f}\ar[r]^{\mathscr{M}\text{-exp}}\ar[d]_{\nu^{-1}}&\bG(E)_{x,f}\\
\fr{t}(E)_{f(0)}\times \prod_{a\in \Phi}\fr{g}_a(E)_{x,f(a)}\ar[r]&
\bT(E)_{f(0)}\times \prod_{a\in \Phi}\bU_a(E)_{x,f(a)}\ar[u]_{\mu}},$$
where the bottom map is obtained via the (mock) exponential maps discussed above on the factors.

\medskip
\begin{remark}
Once $\mathscr{M}$ and $x$ are fixed, the various mock exponential maps we get for different $f$ are all compatible in the sense that they are all restrictions of the map associated to the constant $0+$.
\end{remark}

The functions $f$ of most interest to us are the functions
$$f(a) = 
\begin{cases}
r_0,&\text{if }a\in \Phi (\bG^0,\bT)\cup\{0\},\\
r_i,&\text{if }a\in  \Phi (\bG^{i+1},\bT)- \Phi (\bG^i,\bT)\text{ and }i>0,
\end{cases}$$ associated to tamely ramified twisted Levi sequences $\vec\bG = (\bG^0,\dots ,\bG^d)$ and admissible sequences $\vec r = (r_0,\dots, r_d)$, as in \cite[\S1]{MR1824988}.  In this case, $\bG (E)_{x,f}$ is denoted  $\vec\bG (E)_{x,\vec r}$.  When $\vec r$ and $\vec s$ are admissible sequences such that
$$0<s_i\le r_i\le \min (s_i,\dots , s_d)+\min (s_0,\dots, s_d),$$ for all $i\in \{ 0,\dots , d\}$, then the mock exponential determines an isomorphism
$$\exp : \vec{\fr{g}}(E)_{x,\vec s:\vec r}\to \vec\bG(E)_{x,\vec s:\vec r}$$ of abelian groups that is independent of $\mathscr{M}$.  (See \cite[Lemma 1.3]{MR1824988}.)  The latter isomorphism is ${\rm Gal}(E/F)$-equivariant and yields an isomorphism
$\vec{\fr{g}}_{x,\vec s:\vec r}\cong  \vec G_{x,\vec s:\vec r}$ on $F$-points.  (See \cite[Corollary 2.4]{MR1824988}.)

\bigskip
\begin{remark}
The canonical reference for facts about groups associated to concave functions has historically been \cite{MR0327923}.  More concise and modern treatments of this theory can be found in \cite{MR2431235} and in the work of Yu.  Of particular importance to us are the results on commutators that are discussed in the next section.
\end{remark}

\bigskip
\begin{lemma}\label{HTtransfer}
Suppose $\chi$ is a character of $G_{x,0+}$ that is trivial on $\bG (E)^+\cap G_{x,0+}$.  Then the depth of $\chi$ is identical to the depth of $\chi |T_{0+}$.
\end{lemma}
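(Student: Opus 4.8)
The plan is to reduce the equality of depths to a comparison of two filtrations of $G_{x,0+}$: the Moy--Prasad filtration $\{G_{x,r}\}_{r>0}$ on the ambient group and the filtration $\{T_{r}\}_{r>0}$ on the elliptic torus, and to show that $\chi$ ``sees'' exactly the same jumps on each. By definition, the depth of $\chi$ is the infimum of $r>0$ such that $\chi|G_{x,r+}=1$, and the depth of $\chi|T_{0+}$ is the infimum of $r>0$ such that $\chi|T_{r+}=1$. Since $T_{0+}\subset G_{x,0+}$, one inequality---that $\mathrm{depth}(\chi|T_{0+})\le \mathrm{depth}(\chi)$---is immediate, because $T_{r+}\subset G_{x,r+}$ for every $r$. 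So the real content is the reverse inequality: if $\chi|T_{r+}=1$ then $\chi|G_{x,r+}=1$.

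The key step is to use the root-space decomposition of \S\ref{sec:expsection} together with the hypothesis that $\chi$ is trivial on $\bG(E)^+\cap G_{x,0+}$. First I would pass to the maximal unramified (or the splitting) extension so that $\bT$ and all root groups are available, and invoke the bijective multiplication map $\mu$ from Bruhat--Tits (\cite[Proposition 6.4.48]{MR0327923}, as recalled in \S\ref{sec:expsection}): for the concave function taking value $r+$ everywhere, $\bG(E)_{x,r+}$ is generated by $\bT(E)_{r+}$ and the root groups $\bU_a(E)_{x,r+}$, $a\in\Phi$. Each root group $\bU_a(E)_{x,r+}$ consists of $E$-rational unipotent elements, hence lies in $\bG(E)^+$; intersecting with $G$ and using the intersection formula $\bG(E)_{x,r+}\cap G = G_{x,r+}$, one concludes that $G_{x,r+}$ is generated by $T_{r+}$ together with a subgroup of $\bG(E)^+\cap G_{x,0+}$ on which $\chi$ is trivial by hypothesis. (More precisely, the ``toral-times-unipotent'' factorization descends to $F$-points after a Galois-descent argument, exactly of the type used to build the maps $\exp$ on $F$-points in \S\ref{sec:expsection} and in \cite[Corollary 2.4]{MR1824988}; one uses that $\bT$ is elliptic so that the torus part of an $F$-rational element of $G_{x,r+}$ lies in $T_{r+}$, not merely in $\bT(E)_{r+}$.) Therefore $\chi|G_{x,r+}$ is determined by $\chi|T_{r+}$, and if the latter is trivial so is the former.

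Assembling these: if $r>\mathrm{depth}(\chi|T_{0+})$ then $\chi|T_{r+}=1$, hence by the previous paragraph $\chi|G_{x,r+}=1$, hence $r\ge \mathrm{depth}(\chi)$; letting $r$ decrease to $\mathrm{depth}(\chi|T_{0+})$ gives $\mathrm{depth}(\chi)\le \mathrm{depth}(\chi|T_{0+})$, which combined with the trivial inequality yields equality. (One should also handle the degenerate case $\chi|T_{0+}=1$, where the argument shows $\chi$ itself is trivial on $G_{x,0+}$, consistent with both depths being $0$ in the convention where the trivial character has depth $0$.) I expect the main obstacle to be the Galois-descent bookkeeping needed to control the torus component of an $F$-rational element of $G_{x,r+}$: over $E$ the factorization is clean, but one must verify that the $\bT(E)_{r+}$-component of an element of $G_{x,r+}$ actually lands in $T_{r+}=\bT(F)_{r+}$. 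Ellipticity of $\bT$ in $\bG$, which is built into our running hypotheses, is precisely what makes this work, since it forces the relevant cohomological obstruction to vanish; alternatively one can argue componentwise over the $\Gamma$-orbits $\mathscr{O}$ in $\Phi$ using the norm-map description $T_{\mathscr{O},r}=N_{E/F}(\bT_{\mathscr{O}}(E)_r)$ recorded after Lemma \ref{painintheneck}.
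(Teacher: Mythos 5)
Your overall strategy is the same one the paper uses: exploit the Bruhat--Tits product decomposition (Proposition 6.4.48 of \cite{MR0327923}) of $\bG(E)_{x,r+}$ into a toral factor and root-group factors, identify the root-group factors with a piece of $\bG(E)^+$ on which $\chi$ is trivial by hypothesis, and conclude that $\chi$ only ``sees'' the toral piece. But where you work, the claim you need is stronger than what the decomposition gives you, and this is precisely the point you flag but do not resolve. You assert that $G_{x,r+}$ is generated by $T_{r+}$ together with a subgroup of $\bG(E)^+\cap G_{x,0+}$. The obstruction is that when you intersect the $E$-level factorization $g=tu$ ($t\in\bT(E)_{r+}$, $u$ a product of root group elements) with $G$, the toral component $t$ is only Galois-fixed up to a $1$-cocycle valued in $\bT(E)$, whose splitting correction $s$ again lies in $\bT(E)$, not in $\bT(F)$; after rewriting $g=(ts^{-1})(su)$ you get a genuine element $ts^{-1}\in T_{r+}$, but $su$ is a torus element times a unipotent product and is not evidently in $\bG(E)^+$. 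Ellipticity of $\bT$ is not what rescues this (it controls anisotropy, not this cocycle). What rescues it --- and what the paper does --- is to descend one level of precision: pass to the abelian graded quotients $G_{x,r:r+}$. There the commutator corrections that produce the toral cocycle die, the Bruhat--Tits/Moy--Prasad isomorphism is $\Gamma$-equivariant on graded pieces, tame descent gives $\bT(E)_{r:r+}^\Gamma = T_{r:r+}$, and one cleanly obtains $\mu^{-1}(G_{x,r:r+})=T_{r:r+}\times S$ with $\mu(S)$ contained in the image of $\bG(E)^+\cap G_{x,r}$. Restricting $\chi$ to each graded piece then gives both inequalities between depths. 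So the fix is not extra cocycle bookkeeping at the group level but replacing ``$G_{x,r+}$ is generated by $\dots$'' with the corresponding statement about $G_{x,r:r+}$; with that substitution your argument becomes the paper's proof.
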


\begin{proof}
Suppose $r>0$.  Then we have a commutative diagram
$$\xymatrix{ \fr{g} (E)_{x,r:r+}\ar[r]^{\mathscr{M}\text{-exp}}\ar[d]_{\nu^{-1}}&\bG(E)_{x,r:r+}\\
\fr{t}(E)_{r:r+}\times \prod_{a\in \Phi}\fr{g}_a(E)_{x,r:r+}\ar[r]&
\bT(E)_{r:r+}\times \prod_{a\in \Phi}\bU_a(E)_{x,r:r+}\ar[u]_{\mu}}$$
of isomorphisms.
We are especially interested in $\mu$, but its properties may be deduced from the properties of the other maps, which are studied in \cite{MR1824988}.
It is easy to see that $\mu^{-1}$ maps $G_{x,r:r+}$ to a set of the form $T_{r:r+}\times S$, where $S$ is a subgroup of $\prod_{a\in \Phi}\bU_a(E)_{x,r:r+}$ such that $\mu (S)$ is the image in $G_{x,r:r+}$ of a subset of $\bG(E)^+\cap G_{x,r}$.

Suppose $\chi$ is a character of $G_{x,0+}$ that is trivial on $G_{x,r+}$.  Then $\chi |G_{x,r}$ determines a character of $G_{x,r:r+}$ that factors through $\mu^{-1}$  to a character of $T_{r:r+}\times S$ that is the product of $\chi |T_{r:r+}$ with the trivial character of $S$.  It follows that if $\chi$ has depth $r$ then its restriction to $T_{0+}$ also has depth $r$.

Similarly, if $\chi$ is a character of $G_{x,0+}$ that is trivial on $T_{r+}$ and $\bG(E)^+\cap G_{x,0+}$ then $\chi$  determines a character of $G_{x,r:r+}$ and a corresponding character of $T_{r:r+}\times S$ that is trivial on $S$.
It follows that if $\chi|T_{0+}$ has depth $r$ then $\chi$ also has depth $r$.
\end{proof}

\subsection{The norm map and the filtrations of the $\bT_\mathscr{O}$'s}

Fix   an orbit $\mathscr{O}$ of $\Gamma = {\rm Gal} (\overline{F}/F)$ in $\Phi = \Phi (\bG,\bT)$,  and recall that we have let $\bT_{\mathscr{O}}$ denote the $F$-torus generated by the $E$-tori
$\bT_a = \text{image}(\check a)$ for $a\in \mathscr{O}$.

In this section, we study the Moy-Prasad filtration of the $T_{\mathscr{O}}$ using 
the norm map \begin{eqnarray*}
N_{E/F}: \bT_{\mathscr{O}}(E)&\to& T_{\mathscr{O}} \\
t&\mapsto& \prod_{\gamma\in {\rm Gal}(E/F)} \gamma (t).
\end{eqnarray*}

%
%
%


Our main result is:

\begin{lemma}\label{painintheneck}
If $a\in \mathscr{O}$ and $r>0$ then $$T_{\mathscr{O},r} =N_{E/F}(\check a (E_r^\times))= N_{E/F}(\bT_a(E)_r)= N_{E/F}(\bT_{\mathscr{O}}(E)_r)$$ for all $a\in\mathscr{O}$.
\end{lemma}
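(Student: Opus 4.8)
The plan is to reduce everything to the rank-one subtori $\bT_a$ and then to a cohomological statement about the norm map. First I would analyze $\bT_a$ itself. Write $\check a = m\lambda_0$, where $\lambda_0$ is the primitive cocharacter in $X_*(\bT)$ proportional to $\check a$; since $\langle a,\check a\rangle = 2$ we get $m\langle a,\lambda_0\rangle = 2$, hence $m\mid 2$, so $m\in\{1,2\}$. The cocharacter $\lambda_0$ identifies $\mathbb{G}_{m,E}$ with $\bT_a$, so $\bT_a(E)_r = \lambda_0(E_r^\times)$ for $r>0$, while $\check a(E_r^\times) = \lambda_0\bigl((E_r^\times)^m\bigr)$. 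Because $p\neq 2$ and $E_r^\times = 1+E_r$ is a pro-$p$ group for $r>0$, the $m$-th power map is an automorphism of $E_r^\times$; therefore $\check a(E_r^\times) = \bT_a(E)_r$, which already gives the equality $N_{E/F}(\check a(E_r^\times)) = N_{E/F}(\bT_a(E)_r)$.

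Next I would pass from $\bT_a$ to $\bT_{\mathscr O}$. For $\gamma\in{\rm Gal}(E/F)$ one has $\bT_{\gamma a} = \gamma(\bT_a)$, and since $v_F$ is ${\rm Gal}(E/F)$-invariant, $\gamma$ preserves Moy--Prasad filtrations, so $\bT_{\gamma a}(E)_r = \gamma(\bT_a(E)_r)$; as $N_{E/F}(\gamma t)=N_{E/F}(t)$, this gives $N_{E/F}(\bT_b(E)_r) = N_{E/F}(\bT_a(E)_r)$ for every $b\in\mathscr O$. Now $\bT_{\mathscr O}$ is $E$-split with $X_*(\bT_{\mathscr O}) = X_*(\bT)\cap\mathbb{Q}\!\cdot\!\{\check b : b\in\mathscr O\}$, and I would invoke the running hypothesis $p\nmid|\pi_1(\bG_{\rm der})|$ (in force throughout the description of the construction) to see, as in the proof of Lemma \ref{goodp} and \cite[Lemma 3.1.1]{KalYu}, that the sublattice $\sum_{b\in\mathscr O}\Z\check b$ has index prime to $p$ in $X_*(\bT_{\mathscr O})$. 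Writing $S = \langle \bT_b(E)_r : b\in\mathscr O\rangle$, the quotient $\bT_{\mathscr O}(E)_r/S$ embeds in $\bigl(X_*(\bT_{\mathscr O})/\sum_b\Z\check b\bigr)\otimes_{\Z}(1+E_r)$, which is a finite group of order prime to $p$ tensored with a pro-$p$ group, hence trivial; so $S = \bT_{\mathscr O}(E)_r$. Since $N_{E/F}$ is a homomorphism on the abelian group $\bT_{\mathscr O}(E)$, we conclude $N_{E/F}(\bT_{\mathscr O}(E)_r) = \langle N_{E/F}(\bT_b(E)_r): b\in\mathscr O\rangle = N_{E/F}(\bT_a(E)_r)$.

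Finally I would identify $T_{\mathscr O,r}$ with $N_{E/F}(\bT_{\mathscr O}(E)_r)$. The inclusion $N_{E/F}(\bT_{\mathscr O}(E)_r)\subseteq T_{\mathscr O,r}$ is immediate: $N_{E/F}$ carries $\bT_{\mathscr O}(E)$ into $\bT_{\mathscr O}(F) = T_{\mathscr O}$, and ${\rm Gal}(E/F)$-invariance of $v_F$ forces $N_{E/F}(\bT_{\mathscr O}(E)_r)\subseteq \bT_{\mathscr O}(E)_r\cap T_{\mathscr O} = T_{\mathscr O,r}$ by the intersection formula. For the reverse inclusion it suffices that the norm $\bT_{\mathscr O}(E)_r\to T_{\mathscr O,r}$ be surjective, i.e.\ that $\hat H^0({\rm Gal}(E/F),\bT_{\mathscr O}(E)_r)=0$. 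As a ${\rm Gal}(E/F)$-module, $\bT_{\mathscr O}(E)_r \cong X_*(\bT_{\mathscr O})\otimes_{\Z}(1+E_r)$; since $E/F$ is tamely ramified, $1+E_r$ is a cohomologically trivial $\Z[{\rm Gal}(E/F)]$-module (the graded pieces of its filtration are cohomologically trivial for tame $E/F$), and applying $X_*(\bT_{\mathscr O})\otimes_{\Z}(-)$ to a length-one projective resolution of $1+E_r$ preserves cohomological triviality, because $X_*(\bT_{\mathscr O})\otimes_{\Z}\Z[{\rm Gal}(E/F)]$ is $\Z[{\rm Gal}(E/F)]$-free. Hence $\hat H^0 = 0$, the norm is surjective, and combining the three steps yields all of the asserted equalities uniformly in $a\in\mathscr O$.

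The step I expect to be the main obstacle is the passage from $\bT_a$ to $\bT_{\mathscr O}$: one must know that the coroots of a single Galois orbit already generate $\bT_{\mathscr O}(E)_r$ after passing to the $r$-th filtration. This genuinely requires $p\nmid|\pi_1(\bG_{\rm der})|$ --- without it the index $[X_*(\bT_{\mathscr O}):\sum_{b\in\mathscr O}\Z\check b]$ can be divisible by $p$ (for instance $\bG = {\rm PGL}_3$, $p=3$, and a Coxeter-type torus), and there $N_{E/F}(\check a(E_r^\times))$ is a proper subgroup of $T_{\mathscr O,r}$ even though $T_{\mathscr O,r} = N_{E/F}(\bT_{\mathscr O}(E)_r)$ still holds. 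The cohomological vanishing used in the last step is standard, but it too leans essentially on the tameness of $E/F$.
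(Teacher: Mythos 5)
Your proposal follows the same three-part architecture as the paper's proof: (a) identify $\check a(E_r^\times)$ with $\bT_a(E)_r$, (b) show $N_{E/F}(\bT_{\mathscr O}(E)_r) = T_{\mathscr O,r}$, and (c) pass from a single $\bT_a$ to $\bT_{\mathscr O}$ via the generation claim. For (a) you and the paper use the same device: the kernel of $\check a$ has order dividing $2$, and $p\neq 2$ makes the $m$-th power map an automorphism of the pro-$p$ group $E_r^\times$. For (b) your route is genuinely different: you argue by Tate cohomology, exploiting cohomological triviality of $1+E_r$ over $\mathrm{Gal}(E/F)$ (for tame $E/F$) and the fact that tensoring with the finite free $\Z$-module $X_*(\bT_{\mathscr O})$ preserves cohomological triviality; the paper instead cites Lemma~3.1.1 of Kaletha directly, noting only that the kernel of the norm $R_{E/F}(\bT_{\mathscr O})\to\bT_{\mathscr O}$ is a torus. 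The paper's route is more economical, but both are correct, and yours makes the mechanism visible.

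The more substantive point is your handling of (c). The paper writes ``Since $\bT_{\mathscr O}(E)_r$ is generated by the groups $\bT_b(E)_r$, $b\in\mathscr O$'' without justification or hypothesis. As you observe, this is exactly where a condition on $p$ enters: the quotient $\bT_{\mathscr O}(E)_r/\prod_b\bT_b(E)_r$ is (by flatness of $1+E_r$) isomorphic to $\bigl(X_*(\bT_{\mathscr O})/\sum_b\Z\check b\bigr)\otimes_\Z(1+E_r)$, which vanishes precisely when the index $[X_*(\bT_{\mathscr O}):\sum_b\Z\check b]$ is prime to $p$. Your $\mathrm{PGL}_3$, $p=3$, Coxeter-torus example is a valid counterexample: there $\bT_{\mathscr O}=\bT$, $\sum_b\Z\check b$ is the coroot lattice of index $3$ in $X_*(\bT)$, and since both $X_*(\bT)\otimes\mathfrak O_E$ and $Q^\vee\otimes\mathfrak O_E$ are cohomologically trivial (unramified cubic $E/F$), the map $\bT(E)_r^\Gamma\to(\Z/3)^\Gamma$ is onto and $N_{E/F}(\check a(E_r^\times))$ is strictly contained in $T_{\mathscr O,r}$. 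So this is a real gap in the proof as the paper writes it, and the lemma silently depends on the running hypothesis $p\nmid|\pi_1(\bG_{\mathrm der})|$ stated later in \S2.4. You should be slightly careful about one sub-claim: you assert that $p\nmid|\pi_1(\bG_{\mathrm der})|$ implies $p\nmid[X_*(\bT_{\mathscr O}):\sum_b\Z\check b]$, which requires knowing that $\sum_b\Z\check b$ is saturated in the coroot lattice (equivalently, that the only torsion in $X_*(\bT_{\mathscr O})/\sum_b\Z\check b$ comes from $\pi_1(\bG_{\mathrm der})$); a short argument or a reference for that would tighten the step, but the core observation stands and is a worthwhile amendment to the paper's lemma.
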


\begin{proof} 
Given a surjective homomorphism
$$\bA\to \bB$$ of $F$-tori that split over $E$
the associated homomorphism
$$A_r\to B_r$$
is surjective for all $r>0$, so long as the kernel of the original map is connected or finite with order not divisible by $p$.  (See Lemma 3.1.1 \cite{KalYu}.)

We can apply this to the norm map
$$N_{E/F}:R_{E/F}(\bT_{\mathscr{O}})\to \bT_{\mathscr{O}},$$ since it is surjective and its kernel is a torus.  (See the remarks below, following the proof.)
We deduce that
$$N_{E/F}(\bT_{\mathscr{O}}(E)_r) = T_{\mathscr{O},r}$$

Similarly, the surjection
$$\check a :R_{E/F}(\mathbb{G}_m)\to\bT_a$$ yields surjections
$$\bT_a (E)_r =\check a(E_r^\times )$$
for all $r>0$.
(If the kernel is nontrivial then it must have order two, in which case we invoke our assumption that $p\ne 2$.)

Since $\bT_{\mathscr{O}}(E)_r$ is generated by the groups $\bT_b (E)_r$ with $b\in \mathscr{O}$, it follows that $$N_{E/F}(\bT_a (E)_r) = N_{E/F}(\bT_{\mathscr{O}}(E)_r).$$
Our assertions now follow.
\end{proof}

\begin{remark}
The latter result asserts that, in a certain sense, the norm map ``preserves depth,''  but it does not  say that $N_{E/F}$ preserves the depths of individual elements.
This is similar to the fact that, since $E/F$ is tamely ramified,  
$\tr_{E/F}(E_r) = F_r$ for all real numbers $r$, even though the trace clearly does not preserve depths of individual elements.
\end{remark}

\medskip
\begin{remark}
It is not true in general that $\bT_a(E) = \check a (E^\times)$ for all roots $a$ that are defined over $E$.
Consider, for example, $\bG = {\rm PGL}_2$ and the subgroup $\bT$ consisting of cosets $g\bZ$, where $g\in \GL_2$ and $\bZ$ is the center of $\GL_2$.  Then $\check a(E^\times)$ consists of cosets 
$$\begin{pmatrix}u&0\\ 0&u^{-1}\end{pmatrix}\bZ = \begin{pmatrix}u^2&0\\ 0&1\end{pmatrix}\bZ,$$ with $u\in E^\times$, while $\bT_a(E)$ consists of cosets $$\begin{pmatrix}u&0\\ 0&1\end{pmatrix}\bZ,$$ with $u\in E^\times$.
\end{remark}

\medskip
\begin{remark}
Let $\bS$ be an $F$-torus.  If $L$ is a field containing $E$ then the norm map
$$N_{E/F}:R_{E/F}(\bS) \to \bS$$ over $L$ is equivalent to the multiplication map
$$\bS^n = \bS\times\cdots \times \bS\to \bS.$$  Accordingly, the norm map must be surjective and its kernel must be a torus.  If $\bS$ is $E$-split then the choice of an $E$-diagonalization of $\bS$ yields a realization of $\bS$ as a product of $R_{E/F} (\mathbb{G}_m)$ factors. Thus the norm gives an explicit realization of every $F$-torus as a quotient of an induced torus.  Similarly, the inclusion of $\bS$ in $R_{E/F}(\bS)$ explicitly shows that every $F$-torus is contained in an induced torus.
(See Propositions 2.1 and 2.2 \cite{MR1278263}.)
\end{remark}

\subsection{Yu's genericity conditions}\label{sec:GE2}

This section is a recapitulation of facts from \cite{MR1824988} that are, implicitly or explicitly, based on results from  \cite{MR0354892}.

Fix $i\in \{ 0,\dots , d-1\}$ and suppose $X^*_i$ lies in the dual coset $(\phi |Z^{i,i+1}_{r_i})^*\in \fr{z}^{i,i+1,*}_{-r_i: (-r_i)+}$.

For each root $a\in \Phi^{i+1} = \Phi (\bG^{i+1},\bT)$, let $H_a = d\check a(1)$  be the associated coroot in the Lie algebra $\fr{g}^{i+1}$.

In Lemma \ref{GEonelemma}, we will show that $X^*_i$ necessarily satisfies Yu's condition (\cite[p.~596]{MR1824988}):

\bigskip
{\bf GE1.}   $v_F(X^*_i(H_a))=-r_i,\quad \forall a\in \Phi^{i+1}-\Phi^i$.

\bigskip
For the rest of this section, we assume {\bf GE1} is satisfied.

Now let $\fr{F}$ be the residue field of the algebraic closure $\overline{F}$ of $F$ or, equivalently, the algebraic closure of the residue field $\fr{f}$.

Choose $\varpi_{r_i}\in \overline{F}$ of depth $r_i$ and note that $\varpi_{r_i}X^*_i$ has depth 0.

Given $\chi\in X^*(\bT)$, we have an associated linear form $d\chi :\bfr{t}\to \mathbb{G}_a$, and the map $\chi\mapsto d\chi$ extends to an isomorphism $$X^*(\bT)\otimes_\Z \overline{F} \cong \bfr{t}^*\otimes \overline{F}.$$
Under the latter isomorphism, the element $\varpi_{r_i}X^*_i$ corresponds to an element of $X^*(\bT)\otimes_\Z \fr{O}_{\overline F}$.

Let $\widetilde{X}^*_i$ be the residue class of $\varpi_{r_i}X^*_i$ modulo
$X^*(\bT)\otimes_\Z \fr{P}_{\overline F}$.
Thus $$\widetilde{X}^*_i\in X^*(\bT)\otimes_\Z \fr{F} .$$
Though $\widetilde X^*_i$ is only well-defined up to multiplication by a scalar in $\fr{F}^\times$ (depending on the choice of $\varpi_{r_i}$), it is easy to see that this scalar does not affect our discussion.  (In other words, one could treat $\widetilde{X}^*_i$ as a projective point.)

Let $$\Phi^{i+1}_{\widetilde{X}_i^*}= \left\{ a\in \Phi^{i+1}\ :\ \widetilde X_i(H_a)=0\right\}.$$

\medskip
\begin{lemma}
$\Phi^{i+1}_{\widetilde{X}_i^*}= \Phi^i$.
\end{lemma}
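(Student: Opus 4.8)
The plan is to establish the two inclusions $\Phi^i\subseteq \Phi^{i+1}_{\widetilde X_i^*}$ and $\Phi^{i+1}_{\widetilde X_i^*}\subseteq \Phi^i$ separately, translating the condition $\widetilde X_i^*(H_a)=0$ back into a statement about the depth of $X_i^*(H_a)$ or equivalently about the restriction of $\phi$ to appropriate subtori of $\bT$. The key observation is that $\widetilde X_i^*(H_a)=0$ in $\fr{F}$ means precisely that $v_F(X_i^*(H_a))>-r_i$, since $\widetilde X_i^*$ is the reduction of the depth-zero element $\varpi_{r_i}X_i^*$; so the lemma is equivalent to the assertion that $v_F(X_i^*(H_a))>-r_i$ if and only if $a\in\Phi^i$. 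Combined with \textbf{GE1} (which we are assuming throughout this section and which gives $v_F(X_i^*(H_a))=-r_i$ for $a\in\Phi^{i+1}-\Phi^i$), the ``only if'' direction of that equivalence is immediate: if $a\in\Phi^{i+1}-\Phi^i$ then $v_F(X_i^*(H_a))=-r_i$, so $\widetilde X_i^*(H_a)\ne 0$, hence $a\notin\Phi^{i+1}_{\widetilde X_i^*}$. Thus $\Phi^{i+1}_{\widetilde X_i^*}\subseteq\Phi^i$ is essentially just a restatement of \textbf{GE1}.

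For the reverse inclusion $\Phi^i\subseteq\Phi^{i+1}_{\widetilde X_i^*}$, I would use the characterization of $\bG^i$ from \S\ref{sec:recovery}: $\bG^i$ is the $E$-Levi subgroup with $\Phi(\bG^i,\bT)=\Phi^i=\bigcup_{\mathscr O:\ \phi|T_{\mathscr O,r_i}=1}\mathscr O$ (the direct description in the remark after the construction of $\vec r$ and $\vec\bG$), together with the fact that $\phi|H^{i-1}_{x,r_i}=1$, equivalently $\phi|T^{i-1}_{r_i}=1$. Fix $a\in\Phi^i$, lying in a $\Gamma$-orbit $\mathscr O$ with $\phi|T_{\mathscr O,r_i}=1$. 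Using Lemma \ref{painintheneck}, $T_{\mathscr O,r_i}=N_{E/F}(\check a(E_{r_i}^\times))$, and passing through the Moy-Prasad isomorphism $\exp:\fr{z}^{i,i+1}_{r_i:r_i+}\cong Z^{i,i+1}_{r_i:r_i+}$ (and the analogous isomorphisms for the $\fr t_{\mathscr O}$'s) together with Definition \ref{dualcosetdef}, the vanishing of $\phi$ on $T_{\mathscr O,r_i}$ translates into the statement that $\psi(X_i^*(\tr_{E/F}(\fr{O}_E\cdot H_a\cdot\varpi_{r_i}^{-1})))=1$, i.e.\ that $\tr_{E/F}(\fr{O}_E\, X_i^*(H_a)\,\varpi_{r_i}^{-1})\subseteq\fr{P}_F$, which by tameness of $E/F$ forces $v_F(X_i^*(H_a)\varpi_{r_i}^{-1})>0$, i.e.\ $v_F(X_i^*(H_a))>-r_i$, i.e.\ $\widetilde X_i^*(H_a)=0$. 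One has to handle separately the roots $a$ with $\mathscr O$ already in $\Phi(\bG^0,\bT)$ versus those appearing at intermediate levels, but the argument is uniform: membership of $\mathscr O$ in $\Phi^i$ is exactly the condition $\phi|T_{\mathscr O,r_i}=1$, which is exactly $v_F(X_i^*(H_a))>-r_i$.

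The main obstacle I anticipate is purely bookkeeping: correctly matching up the coroot $H_a=d\check a(1)$ (an element of $\bfr g^{i+1}$, not obviously inside $\fr z^{i,i+1}$) with the torus $\bT_{\mathscr O}$ and with the decomposition $\fr h^i=\fr z^{i,i+1}\oplus\fr h^{i-1}$, keeping track of the normalizations $\varpi_{r_i}$ and the choice of additive character $\psi$, and ensuring that the pairing $X_i^*(H_a)$ is being evaluated through the right identifications (in particular that $X_i^*\in\fr z^{i,i+1,*}_{-r_i}$ can be paired with $H_a$ even though $H_a$ need not lie in $\fr z^{i,i+1}$ — this is where one uses that $X_i^*$ is extended by zero on $\fr h^{i-1}$, and that \textbf{GE1} is stated precisely for all $a\in\Phi^{i+1}$). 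I would structure the proof around the single clean equivalence
$$a\in\Phi^i\iff \phi|T_{\mathscr O_a,r_i}=1\iff v_F(X_i^*(H_a))>-r_i\iff \widetilde X_i^*(H_a)=0,$$
proving the middle equivalence via Lemma \ref{painintheneck}, Definition \ref{dualcosetdef}, and the tameness fact $\tr_{E/F}(E_r)=F_r$, and then invoking \textbf{GE1} only to get that the inequality is in fact an equality off $\Phi^i$ (which is not strictly needed for the lemma but makes the dichotomy transparent).
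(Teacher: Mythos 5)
Your argument for the inclusion $\Phi^{i+1}_{\widetilde X^*_i}\subseteq\Phi^i$ (via \textbf{GE1}) is exactly the paper's. For the reverse inclusion, however, you take a genuinely different and much longer route than the paper, and you in fact walk right past the shortcut. The paper's proof of $\Phi^i\subseteq\Phi^{i+1}_{\widetilde X^*_i}$ is one line: the representative $X^*_i$, regarded as an element of $\fr{z}^{i,*}$, is extended by zero across the decomposition $\fr{g}^i=\fr{z}^i\oplus\fr{g}^i_{\rm der}$, and for $a\in\Phi^i$ the coroot $H_a=d\check a(1)$ lies in $\fr{g}^i_{\rm der}$, so $X^*_i(H_a)=0$ on the nose — hence $\widetilde X^*_i(H_a)=0$ with no depth estimate needed at all. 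You actually mention this fact in your ``bookkeeping'' discussion (that $X^*_i$ ``is extended by zero on $\fr{h}^{i-1}$''), but then do not notice that it already closes the argument, and instead you route the claim through the alternate characterization $\Phi^i=\bigcup_{\mathscr O:\,\phi|T_{\mathscr O,r_i}=1}\mathscr O$, Lemma \ref{painintheneck}, the norm map, the Moy--Prasad exponential, and the tameness fact $\tr_{E/F}(E_r)=F_r$.

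Your route is correct, but two caveats. First, the chain of identities you gesture at — converting $\phi|T_{\mathscr O,r_i}=1$ into $\psi(\tr_{E/F}(uX^*_i(H_a)))=1$ and thence into $v_F(X^*_i(H_a))>-r_i$ — is precisely the computation later carried out in the proof of Lemma \ref{GEonelemma}; it has not been established at the point where the present lemma appears, so you would be re-deriving a later lemma inside this one rather than invoking it. Second, your ``iff'' $a\in\Phi^i\iff v_F(X^*_i(H_a))>-r_i$ is both heavier machinery and a weaker conclusion than what's actually available: for $a\in\Phi^i$ the pairing $X^*_i(H_a)$ is not merely of high depth, it is literally zero. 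When you see that $X^*_i$ is by convention extended by zero on the complement of $\fr{z}^i$, the reverse inclusion should be immediate, with no appeal to norms, traces, or the depth-$r_i$ characterization of $\Phi^i$.
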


\begin{proof}
{\bf GE1} implies that $\widetilde{X}^*_i (H_a) \ne 0$ for all $a\in \Phi^{i+1}-\Phi^i$.  Since $X^*_i\in \fr{z}^{i,*}$ and $H_a\in \fr{g}_{\rm der}^i$ for $a\in \Phi^i$, we must have $\widetilde{X}^*_i (H_a)=0$ for all $a\in \Phi^i$.  
\end{proof}

We now observe that the Weyl group 
$$W(\Phi^{i+1}) = \langle r_a\ :\ a\in \Phi^{i+1}\rangle$$ acts on
$X^*(\bT)\otimes_\Z\fr{F}$ and we let $Z_{W(\Phi^{i+1})}(\widetilde X^*_i)$ denote the stabilizer of $\widetilde X^*_i$.

Yu's second genericity condition (\cite[p.~596]{MR1824988}) is:

\bigskip
{\bf GE2.}   $Z_{W(\Phi^{i+1})}(\widetilde X^*_i) =W (\Phi^i) = W\!\left(\Phi^{i+1}_{\widetilde X_i}\right)$.

\bigskip
\begin{lemma}\label{Steinberglemma}
$W(\Phi^i) = W\!\left(\Phi^{i+1}_{\widetilde X_i}\right)$ is identical to the subgroup of
$Z_{W(\Phi^{i+1})}(\widetilde X^*_i)$ generated by the reflections in $Z_{W(\Phi^{i+1})}(\widetilde X^*_i)$.
\end{lemma}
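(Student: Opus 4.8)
The claim is that the reflection subgroup of $Z_{W(\Phi^{i+1})}(\widetilde X^*_i)$ — the subgroup generated by those reflections $r_a$, $a\in\Phi^{i+1}$, that fix $\widetilde X^*_i$ — coincides with $W(\Phi^i)=W(\Phi^{i+1}_{\widetilde X_i})$. The natural framework is Steinberg's analysis of centralizers in Weyl groups: for a semisimple element (or, dually, a point in a vector space over the residue field), the stabilizer in a Weyl group need not be a reflection group, but the subgroup generated by the reflections it contains is always the Weyl group of the sub-root-system of roots vanishing on that point. This is precisely the content of Lemma 2.5 and Corollary 1.13 of \cite{MR0354892} (cited already in Remark \ref{conditionfourrem}), applied to the root system $\Phi^{i+1}$ and the linear functional $\widetilde X^*_i \in X^*(\bT)\otimes_\Z \fr F$.

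\textbf{Key steps.} First I would observe that a reflection $r_a\in W(\Phi^{i+1})$ fixes $\widetilde X^*_i$ if and only if $\widetilde X^*_i(H_a)=0$, i.e.\ if and only if $a\in \Phi^{i+1}_{\widetilde X_i^*}$; this is immediate from the reflection formula $r_a(\lambda)=\lambda - \lambda(H_a)\,a$ acting on $X^*(\bT)\otimes\fr F$, together with the nondegeneracy that makes $a\ne 0$ in this space. Hence the reflections lying in $Z_{W(\Phi^{i+1})}(\widetilde X^*_i)$ are exactly the $r_a$ with $a\in\Phi^{i+1}_{\widetilde X_i^*}$, and the subgroup they generate is by definition $W(\Phi^{i+1}_{\widetilde X_i^*})$. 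Second, by the preceding lemma we already know $\Phi^{i+1}_{\widetilde X_i^*}=\Phi^i$, so $W(\Phi^{i+1}_{\widetilde X_i^*})=W(\Phi^i)$, giving the two displayed equalities in the statement. What remains — and this is where Steinberg enters — is to confirm that $\Phi^{i+1}_{\widetilde X_i^*}$ is itself a (closed, symmetric) subsystem of $\Phi^{i+1}$, so that ``the subgroup generated by the reflections in the stabilizer'' is a genuine Weyl group rather than something smaller; closure under addition follows because if $a,b\in\Phi^{i+1}$ both kill $\widetilde X^*_i$ and $a+b\in\Phi^{i+1}$, then $(a+b)(H_{?})$-type relations force $a+b$ to kill it too, this being exactly the elementary part of \cite[\S1]{MR0354892}.

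\textbf{The main obstacle.} The only real point of substance is that $\Phi^i$ is the \emph{full} sub-root-system of roots of $\Phi^{i+1}$ vanishing on $\widetilde X^*_i$, with no ``extra'' roots creeping in — this is guaranteed by the previous lemma — and, conversely, that the reflection subgroup of the stabilizer cannot be larger than $W(\Phi^i)$, which is exactly Steinberg's Corollary 1.13 / Lemma 2.5: the reflections in $Z_{W(\Phi^{i+1})}(\widetilde X^*_i)$ are precisely those fixing the fixed hyperplane arrangement determined by $\Phi^i$. I would not expect to have to reprove Steinberg's results; I would simply cite Lemma 2.5 and Corollary 1.13 of \cite{MR0354892} for the statement that, for any root system $\Psi$ and any vector $v$ in the relevant ambient space, the reflection subgroup of $Z_{W(\Psi)}(v)$ equals $W(\{a\in\Psi : a(v)=0\})$, then specialize $\Psi=\Phi^{i+1}$, $v=\widetilde X^*_i$, and invoke the preceding lemma. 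The genericity hypothesis \textbf{GE1} is what ensures $\widetilde X^*_i$ actually distinguishes $\Phi^i$ from the rest of $\Phi^{i+1}$, so it is used (via the previous lemma) but nothing beyond it is needed here.
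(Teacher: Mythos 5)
Your ``Key steps'' paragraph reproduces the paper's proof exactly: the reflection formula $r_a(\widetilde X^*_i)=\widetilde X^*_i-\widetilde X^*_i(H_a)\,a$ shows $r_a$ fixes $\widetilde X^*_i$ iff $a\in\Phi^{i+1}_{\widetilde X^*_i}$, every reflection in $W(\Phi^{i+1})$ is an $r_a$, and $\Phi^{i+1}_{\widetilde X^*_i}=\Phi^i$ by the preceding lemma, so the reflection subgroup of the stabilizer is $W(\Phi^i)$ by definition. Your ``main obstacle'' paragraph is, however, a misread of where the difficulty lies: the elementary argument you just gave is already complete and self-contained, there is no step where Steinberg's Corollary 1.13 or Lemma 2.5 is needed and the paper's proof of this lemma cites nothing from Steinberg (those results enter only in the \emph{following} lemma, where $Z_{W(\Phi^{i+1})}(\widetilde X^*_i)/W(\Phi^i)$ is compared with the torsion of $X^*(\bT)/\Z\Phi^{i+1}$); moreover, the worry about whether $\Phi^{i+1}_{\widetilde X^*_i}$ is a closed subsystem is moot, since the previous lemma identifies it with $\Phi^i$, the root system of the Levi $\bG^i$, which is automatically closed.
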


\begin{proof}
If $a\in \Phi^{i+1}$ then
$$r_a\left(\widetilde{X}^*_i \right) = \widetilde X^*_i - \widetilde X^*_i (H_a)\, a,$$
 and thus $r_a \in Z_{W(\Phi^{i+1})}(\widetilde X^*_i)$ precisely when $a\in \Phi^{i+1}_{\widetilde{X}^*_i} = \Phi^i$.
Since every reflection in $W(\Phi^{i+1})$ has the form $r_a$ for some $a\in \Phi^{i+1}$, our claim follows.
\end{proof}

\begin{lemma}
$W(\Phi^i)$ is a normal subgroup of $Z_{W(\Phi^{i+1})}(\widetilde X^*_i)$ and the quotient group
$Z_{W(\Phi^{i+1})}(\widetilde X^*_i)/W(\Phi^i)$ is isomorphic to a subgroup of the torsion component of $X^*(\bT)/\Z\Phi^{i+1}$, where $\Z\Phi^{i+1}$ is the lattice generated by $\Phi^{i+1}$. 
\end{lemma}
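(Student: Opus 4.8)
The plan is to prove the two assertions separately, using throughout Lemma \ref{Steinberglemma}, which (via the already-established equality $\Phi^{i+1}_{\widetilde X^*_i}=\Phi^i$) identifies $W(\Phi^i)$ with the subgroup of $Z:=Z_{W(\Phi^{i+1})}(\widetilde X^*_i)$ generated by the reflections lying in $Z$. For normality, I would show that conjugation by any $w\in Z$ permutes $\Phi^i$, hence normalizes $W(\Phi^i)=\langle r_a\ :\ a\in\Phi^i\rangle$. Indeed, if $a\in\Phi^i$ then $r_a\in Z$, and the identity $wr_aw^{-1}=r_{w(a)}$ (with $w(a)\in\Phi^{i+1}$ since $w\in W(\Phi^{i+1})$) exhibits $r_{w(a)}$ as an element of $Z$, so $w(a)\in\Phi^{i+1}_{\widetilde X^*_i}=\Phi^i$. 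As $w$ was arbitrary, $W(\Phi^i)\trianglelefteq Z$.

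For the quotient, the idea is to compare the characteristic-$p$ stabilizer $Z$ with a characteristic-zero stabilizer, the discrepancy being accounted for by torsion. Using $X^*(\bT)\otimes_\Z\overline F\cong\bfr{t}^*\otimes\overline F$, set $Y=\varpi_{r_i}X^*_i$; it lies in $X^*(\bT)\otimes_\Z\fr{O}_{\overline F}$, has depth $0$, and reduces modulo $X^*(\bT)\otimes_\Z\fr{P}_{\overline F}$ to $\widetilde X^*_i$. For $a\in\Phi^i$ one has $r_aY-Y=-Y(H_a)\,a=0$, because $X^*_i\in\fr{z}^{i,*}$ while $H_a\in\fr{g}_{\rm der}^i$; for $a\in\Phi^{i+1}-\Phi^i$ condition {\bf GE1} gives $v_F(Y(H_a))=0$, hence $r_aY\ne Y$. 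By Steinberg's fixed-point theorem (valid in characteristic zero), the stabilizer of $Y$ in $W(\Phi^{i+1})$ is generated by the reflections it contains, so it equals $W(\Phi^i)$. Thus $W(\Phi^i)=\mathrm{Stab}_{W(\Phi^{i+1})}(Y)\subseteq\mathrm{Stab}_{W(\Phi^{i+1})}(\widetilde X^*_i)=Z$, and $Z/W(\Phi^i)$ is exactly the extra stabilizer acquired upon reduction modulo $\fr{P}_{\overline F}$.

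It then remains to bound this extra stabilizer by the torsion subgroup $T$ of $X^*(\bT)/\Z\Phi^{i+1}$. The place where $T$ enters is the identity $\big(\Z\Phi^{i+1}\otimes_\Z\overline F\big)\cap\big(X^*(\bT)\otimes_\Z\fr{O}_{\overline F}\big)=M'\otimes_\Z\fr{O}_{\overline F}$, where $M'$ denotes the preimage of $T$ in $X^*(\bT)$, so that $M'/\Z\Phi^{i+1}=T$ and $M'\otimes\overline F=\Z\Phi^{i+1}\otimes\overline F$. For $w\in Z$ one has $wY-Y\in X^*(\bT)\otimes\fr{P}_{\overline F}$ (since $w$ fixes $\widetilde X^*_i$) and $wY-Y\in\Z\Phi^{i+1}\otimes\overline F$ (since $(w-1)X^*(\bT)\subseteq\Z\Phi^{i+1}$), so $w\mapsto wY-Y$ carries $Z$ into $M'\otimes\fr{P}_{\overline F}$ and factors through a set-theoretic injection $Z/W(\Phi^i)\hookrightarrow M'\otimes\fr{P}_{\overline F}$, because $wY=w'Y$ iff $w^{-1}w'\in\mathrm{Stab}(Y)=W(\Phi^i)$.

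The remaining, and main, difficulty is to promote this set-theoretic injection to an \emph{injective homomorphism} onto a subgroup of $T$: one must divide $wY-Y$ by $\varpi_{r_i}$, use the depth-$0$ normalization of $Y$ (equivalently the genericity of $X^*_i$) to read off a well-defined class in $M'/\Z\Phi^{i+1}=T$, and verify additivity. This is precisely Steinberg's analysis of torsion primes, and I would carry out the bookkeeping following — or simply cite — \cite[Lemma 2.5 and Corollary 1.13]{MR0354892}, exactly as the companion statements in \S\ref{sec:GE2} do (cf. Remark \ref{conditionfourrem}); it is the step I expect to be the main obstacle.
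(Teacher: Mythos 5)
Your normality argument is correct and more elementary than the paper's treatment: because $r_a\in Z:=Z_{W(\Phi^{i+1})}(\widetilde X^*_i)$ precisely when $a\in\Phi^i$ (the content of Lemma \ref{Steinberglemma}), and $wr_aw^{-1}=r_{w(a)}\in Z$ for $w\in Z$, every $w\in Z$ permutes $\Phi^i$ and hence normalizes $W(\Phi^i)$. The characteristic-zero setup is also sound: $Y=\varpi_{r_i}X^*_i$ satisfies $Y(H_a)=0$ for $a\in\Phi^i$ and $Y(H_a)\neq 0$ for $a\in\Phi^{i+1}-\Phi^i$ by \textbf{GE1}, so the classical fixed-point theorem for finite reflection groups over a characteristic-zero field gives $\mathrm{Stab}_{W(\Phi^{i+1})}(Y)=W(\Phi^i)$, and the lattice identity $(\Z\Phi^{i+1}\otimes\overline F)\cap(X^*(\bT)\otimes\fr{O}_{\overline F})=M'\otimes\fr{O}_{\overline F}$ holds because $X^*(\bT)/M'$ is free. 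Up to this point everything is a valid reconstruction of the preliminary steps in Steinberg's argument.

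The proposal then stops at exactly the step that carries the content of the lemma: turning the set-theoretic injection $w\mapsto wY-Y$ into an injective \emph{group homomorphism} of $Z/W(\Phi^i)$ into the torsion subgroup of $X^*(\bT)/\Z\Phi^{i+1}$. Your sketch of this step (divide by $\varpi_{r_i}$ and ``read off a class'') does not address the central difficulty that $w\mapsto wY-Y$ is a $1$-cocycle rather than a homomorphism; the usual fix is to reduce modulo $\Z\Phi^{i+1}$, where $W(\Phi^{i+1})$ acts trivially (since $(w-1)X^*(\bT)\subseteq\Z\Phi^{i+1}$), but one must still show the resulting homomorphism is injective and lands in the torsion subgroup itself rather than some auxiliary $p$-group, which is precisely Steinberg's torsion-prime analysis. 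You acknowledge this as ``the main obstacle'' and do not close it, and the reference you offer is also the wrong one: Lemma 2.5 and Corollary 1.13 of \cite{MR0354892} are used in Remark \ref{conditionfourrem} only to enumerate torsion primes, whereas the statement asked for is exactly Theorem 4.2(a) of \cite{MR0354892}, and the paper's proof consists solely of citing that theorem with the dictionary $(H,A,L,{}^AL,\Sigma^*,W,Z_W(H))\mapsto(\widetilde X^*_i,\fr{F},X^*(\bT),X^*(\bT)\otimes_\Z\fr{F},\Phi^{i+1},W(\Phi^{i+1}),Z)$. In short, you have redone the easy half of Steinberg's proof and correctly located, but not filled, the hard half.
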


\begin{proof}
The desired result follows directly from Theorem 4.2(a)  in \cite{MR0354892} upon  equating Steinberg's objects
$$(H,A,L,{}^AL,\Sigma^*,W, Z_W(H))$$
with the objects 
$$(\widetilde X^*_i, \fr{F}, X^*(\bT), X^*(\bT)\otimes_\Z \fr{F},\Phi^{i+1}, W(\Phi^{i+1}),Z_{W (\Phi^{i+1})} (\widetilde X^*_i)),$$
respectively.
Note that $Z_W(H)^0$ in \cite{MR0354892} denotes the subgroup of $Z_W(H)$ generated by the reflections in $Z_W(H)$.\end{proof}

\medskip
\begin{corollary} {\rm (\cite[Lemma 8.1]{MR1824988})}
If $p$ is not a torsion prime for the dual of the root datum of $\bG^{i+1}$, that is, $(X_*(\bT),\check\Phi^{i+1}, X^*(\bT),\Phi^{i+1})$ then   {\bf GE1} implies {\bf GE2}.  
\end{corollary}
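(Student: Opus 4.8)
The goal is to derive Yu's implication {\bf GE1} $\Rightarrow$ {\bf GE2} from the structure theory assembled above, under the hypothesis that $p$ is not a torsion prime for the dual root datum $(X_*(\bT),\check\Phi^{i+1},X^*(\bT),\Phi^{i+1})$. By the lemmas already proven in \S\ref{sec:GE2}, essentially all the work is done; what remains is to unwind the definition of ``torsion prime'' and combine two facts. First, the preceding lemma exhibits $Z_{W(\Phi^{i+1})}(\widetilde X^*_i)/W(\Phi^i)$ as (isomorphic to) a subgroup of the torsion component of $X^*(\bT)/\Z\Phi^{i+1}$. Second, Lemma \ref{Steinberglemma} identifies $W(\Phi^i) = W(\Phi^{i+1}_{\widetilde X_i})$ with the subgroup of $Z_{W(\Phi^{i+1})}(\widetilde X^*_i)$ generated by the reflections it contains. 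So {\bf GE2} — the assertion that $Z_{W(\Phi^{i+1})}(\widetilde X^*_i) = W(\Phi^i)$ — is precisely the statement that this quotient is trivial.

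\medskip
First I would recall Steinberg's definition (\cite[\S2]{MR0354892}): a prime $p$ is a \emph{torsion prime} for a root datum when the fundamental group of the corresponding dual object (equivalently, the relevant quotient of character lattice by root lattice, or one of the subsystem-related quotients) has $p$-torsion; conversely, if $p$ is not a torsion prime then for \emph{every} closed subsystem $\Phi' \subseteq \Phi^{i+1}$ the torsion of $X^*(\bT)/\Z\Phi'$ has order prime to $p$ — and in the context relevant here, Theorem 4.2 and Corollary 1.13 of \cite{MR0354892} guarantee that the torsion component in question has \emph{no $p$-torsion at all}. Next I would observe the key arithmetic constraint: by {\bf GE1}, the element $\widetilde X^*_i \in X^*(\bT)\otimes_\Z \fr{F}$ is defined over the algebraic closure $\fr{F}$ of the finite field $\fr{f}$, which has characteristic $p$; hence the group $Z_{W(\Phi^{i+1})}(\widetilde X^*_i)$, being a finite subgroup of the general linear group acting on this $\fr{F}$-vector space, and the quotient $Z_{W(\Phi^{i+1})}(\widetilde X^*_i)/W(\Phi^i)$, which embeds into the \emph{torsion} component of $X^*(\bT)/\Z\Phi^{i+1}$ — a fixed finite abelian group independent of $p$ — must have order dividing that fixed torsion order. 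Combining: this quotient has order prime to $p$.

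\medskip
Now I would close the argument. A finite group that is generated by reflections and whose abelianization-by-reflection-subgroup quotient is forced to be trivial must equal that reflection subgroup — but I do not yet have that $Z_{W(\Phi^{i+1})}(\widetilde X^*_i)$ is generated by reflections; that is exactly what needs proof. Instead, the clean route is: by Lemma \ref{Steinberglemma} the reflection subgroup of $Z := Z_{W(\Phi^{i+1})}(\widetilde X^*_i)$ is $W(\Phi^i)$, so $Z/W(\Phi^i)$ is a finite group containing \emph{no} reflections (every reflection of $W(\Phi^{i+1})$ lying in $Z$ already lies in $W(\Phi^i)$). Steinberg's Theorem 4.2(a), as invoked in the lemma above, identifies $Z/W(\Phi^i)$ with a subgroup of the torsion component $t\bigl(X^*(\bT)/\Z\Phi^{i+1}\bigr)$; since $p$ is not a torsion prime for the dual root datum, Steinberg (\cite[Cor.~1.13, Thm.~2.2ff]{MR0354892}) tells us this torsion component is trivial (or at least has order prime to $p$, which by the previous paragraph's divisibility forces it to be trivial here). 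Therefore $Z = W(\Phi^i)$, and since we already know $W(\Phi^i) = W(\Phi^{i+1}_{\widetilde X_i})$ from Lemma \ref{Steinberglemma}, all three groups in the statement of {\bf GE2} coincide. $\square$

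\medskip
\textbf{The main obstacle.} The substantive point — and the only place one could go wrong — is matching the notion of ``torsion prime for the dual root datum $(X_*(\bT),\check\Phi^{i+1},X^*(\bT),\Phi^{i+1})$'' as stated in the corollary with the precise quantity Steinberg controls in Theorem 4.2, namely the torsion of $X^*(\bT)/\Z\Phi^{i+1}$ and its subsystem variants. This is a bookkeeping issue about which lattice sits on which side of the duality (the dual root datum swaps roots with coroots and $X^*$ with $X_*$), and one must check that Steinberg's ``$p$ not a torsion prime'' hypothesis, applied to the correct datum, indeed kills the torsion component appearing as $Z/W(\Phi^i)$. Once that identification is nailed down — which is exactly the content Yu attributes to \cite[Lemma 8.1]{MR1824988} and traces back to \cite{MR0354892} — the rest is immediate from the lemmas already established.
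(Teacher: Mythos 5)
Your overall strategy is the right one — combine the two lemmas preceding the corollary in \S\ref{sec:GE2} with Steinberg's characterization of torsion primes — but there is a genuine logical gap at the final step, and it is not cosmetic. You correctly observe that the quotient $Z := Z_{W(\Phi^{i+1})}(\widetilde X^*_i)$ modulo its reflection subgroup $W(\Phi^i)$ injects into the torsion component of $X^*(\bT)/\Z\Phi^{i+1}$, and that when $p$ is not a torsion prime this torsion group has order prime to $p$. From this you conclude that $Z/W(\Phi^i)$ has order prime to $p$, which is correct. But the last sentence — that this ``forces it to be trivial'' — does not follow. A finite group of order prime to $p$ can certainly be nontrivial, and your fallback phrase ``Steinberg tells us this torsion component is trivial'' is false in general: for $\bG^{i+1}=\SL_n$ the torsion component of $X^*(\bT)/\Z\Phi^{i+1}$ is $\Z/n$, which is nontrivial, and yet $p$ fails to be a torsion prime for the dual root datum precisely when $p\nmid n$. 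So you have shown only that $Z/W(\Phi^i)$ embeds in a group of order prime to $p$, which is exactly the situation in which, without further input, a nontrivial quotient could survive.

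The missing ingredient is that $Z/W(\Phi^i)$ is in fact a $p$\emph{-group}, and this is where the characteristic of $\fr{F}$ actually does work (you gesture at it with ``acting on this $\fr{F}$-vector space,'' but that remark carries no weight as written). Concretely: for $\sigma\in Z$ one has $\sigma(\widetilde X^*_i)=\widetilde X^*_i$ in $X^*(\bT)\otimes\fr{F}$, but the displacement $\sigma$ induces on a chosen lift of $\widetilde X^*_i$ lies in (the image of) $(X^*(\bT)/\Z\Phi^{i+1})\otimes\fr{F}$, a group of exponent $p$. One checks this displacement $\sigma\mapsto c(\sigma)$ is a homomorphism on $Z$ with kernel $W(\Phi^i)$, so $Z/W(\Phi^i)$ embeds into an $\F_p$-vector space and is therefore an elementary abelian $p$-group. (This is, in effect, the content of Steinberg's Theorem 4.2 applied to an element $H$ of ${}^AL = L\otimes A$ with $A$ of characteristic $p$: the component group lands in the $p$-primary part, not merely in the abstract torsion subgroup, which is a sharper statement than the one you quote.) Once you have ``$p$-group'' in hand, your ``order prime to $p$'' conclusion finishes the argument: a $p$-group of order prime to $p$ is trivial, so $Z = W(\Phi^i) = W(\Phi^{i+1}_{\widetilde X_i})$, which is exactly {\bf GE2}. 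Without that step, the proof as written does not close.
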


\subsection{Genericity of the dual coset}

This section involves the dual cosets defined in Definition \ref{dualcosetdef}.
The  result we prove is inspired by Lemma 3.7.5 \cite{KalYu}.

\begin{lemma}\label{GEonelemma}
If  $i\in \{ 0,\dots , d-1\}$ then every  element $X^*_i$ in the dual coset $(\phi |Z^{i,i+1}_{r_i})^*$ consists of $\bG^{i+1}$-generic elements of depth $-r_i$.\end{lemma}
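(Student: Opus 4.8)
The plan is to reduce the statement to the conjunction of Yu's two genericity conditions \textbf{GE1} and \textbf{GE2} for the elements $X^*_i$, since $\bG^{i+1}$-genericity of depth $-r_i$ is by definition exactly the assertion that \textbf{GE1} and \textbf{GE2} hold (relative to the twisted Levi pair $\bG^i \subset \bG^{i+1}$) for an element of $\fr{z}^{i,i+1,*}$ of depth $-r_i$. The fact that elements of the dual coset have depth $-r_i$ is essentially built into Definition \ref{dualcosetdef}: the coset lives in $\fr{z}^{i,i+1,*}_{-r_i:(-r_i)+}$, and the representative condition $\phi(\exp(Y + \fr{z}^{i,i+1}_{r_i+})) = \psi(X^*_i(Y))$ forces $X^*_i$ to have depth exactly $-r_i$ (rather than something deeper), because $\phi|Z^{i,i+1}_{r_i}$ is nontrivial — this nontriviality is exactly how $r_i$ was characterized in \S\ref{sec:recovery} as $\text{depth}(\phi|T^i_{0+})$, combined with the decomposition $\fr{t}^i = \fr{z}^{i,i+1}\oplus\fr{t}^{i-1}$ and the fact that $\phi|T^{i-1}_{r_i} = 1$.

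Next I would establish \textbf{GE1}, i.e. $v_F(X^*_i(H_a)) = -r_i$ for all $a \in \Phi^{i+1} - \Phi^i$. This is the substantive point and the one that genuinely uses the recursive construction of $\vec\bG$ and $\vec r$. The inequality $v_F(X^*_i(H_a)) \ge -r_i$ is automatic since $X^*_i$ has depth $-r_i$. For the reverse inequality, suppose for contradiction that $v_F(X^*_i(H_a)) > -r_i$ for some $a \in \Phi^{i+1}-\Phi^i$; equivalently $\widetilde{X}^*_i(H_a) = 0$, so $a \in \Phi^{i+1}_{\widetilde X^*_i}$. Using the Moy-Prasad isomorphism and the description of the filtration of $T_{\mathscr{O}}$ via the norm map (Lemma \ref{painintheneck}), together with the characterization $\phi|H^{i-1}_{x,r_i} = 1$ but $\phi$ nontrivial of depth $r_i$ on the relevant root-generated subtori, one sees that $a$ would have to lie in $\Phi^i$ after all — because $\bG^i$ was constructed precisely as the maximal $E$-Levi containing $\bH$ on which the "$r_i$-part" of $\phi$ dies, i.e. $\Phi^i = \bigcup_{\mathscr{O}:\ \phi|T_{\mathscr{O},r_i}=1}\mathscr{O}$ (the direct description in the remark in \S\ref{sec:recovery}). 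This contradicts $a \in \Phi^{i+1}-\Phi^i$. I would phrase this carefully in terms of $d\check a$ applied to $X^*_i$ and the pairing with $\phi|T_{\mathscr{O},r_i}$, invoking Lemma \ref{HTtransfer} to transfer between the $H$-side and the $T$-side, and the compatibility of the mock exponential maps.

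Finally, given \textbf{GE1}, I would invoke the machinery of \S\ref{sec:GE2}: by the Lemma there, $\Phi^{i+1}_{\widetilde X^*_i} = \Phi^i$, and by Lemma \ref{Steinberglemma} together with condition (4) of Definition \ref{permissibledef} — which guarantees the dual coset contains an element satisfying \textbf{GE2}, and by \cite[Lemma 8.1]{MR1824988} this is automatic outside the bad-prime cases — one concludes $Z_{W(\Phi^{i+1})}(\widetilde X^*_i) = W(\Phi^i)$, which is \textbf{GE2}. One subtlety to address: condition (4) asserts \textbf{GE2} for \emph{some} element of the dual coset, whereas the statement claims it for \emph{every} element; but all elements of the coset differ by something of depth $> -r_i$, hence have the same reduction $\widetilde X^*_i$ (the reduction only depends on the coset modulo $\fr{z}^{i,i+1,*}_{(-r_i)+}$, since $\widetilde X^*_i$ is the image of $\varpi_{r_i}X^*_i$ in $X^*(\bT)\otimes \fr{F}$), so \textbf{GE1} and \textbf{GE2} are properties of the coset, not the individual representative. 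The main obstacle is the \textbf{GE1} step: making precise the claim that a vanishing of $\widetilde X^*_i(H_a)$ for $a\notin\Phi^i$ contradicts the maximality in the definition of $\bG^i$, which requires carefully unwinding the Moy-Prasad isomorphism, the decomposition $\fr{h}^i = \fr{z}^{i,i+1}\oplus\fr{h}^{i-1}$, and Lemma \ref{painintheneck} in tandem.
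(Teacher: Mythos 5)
Your proposal is correct and follows essentially the same route as the paper: reduce to \textbf{GE1} and \textbf{GE2}, establish \textbf{GE1} by pairing $X^*_i$ against the coroots via Lemma \ref{painintheneck} and the characterization of $\Phi^i$ as the union of orbits $\mathscr{O}$ with $\phi|T_{\mathscr{O},r_i}=1$, and then invoke condition (4) of Definition \ref{permissibledef} together with \cite[Lemma~8.1]{MR1824988} for \textbf{GE2}. The paper carries out the \textbf{GE1} step directly via the chain $\psi(\tr_{E/F}(uX^*_i(H_a)))=\phi(N_{E/F}(\check a(1+u)))$ rather than by contradiction, and it does not need to pass through Lemma \ref{HTtransfer} as you suggest, but these are only cosmetic differences; your observation that \textbf{GE1}/\textbf{GE2} are invariants of the coset is correct and is implicitly used by the paper.
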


\begin{proof}
Given $a\in \Phi^{i+1}-\Phi^i$, let $H_a= d\check a (1)$.
Let $\mathscr{O}$ be the $\Gamma$-orbit of $a$.
With the obvious notations, we have the Lie algebra analogue
of Lemma \ref{painintheneck}:
\begin{eqnarray*}
\fr{t}_{\mathscr{O},r_i}&=& \tr_{E/F}(d\check a (E_{r_i})) = \tr_{E/F}(E_{r_i}H_a)\\ &=& \tr_{E/F}(\fr{t}_a(E)_{r_i})= \tr_{E/F}(\fr{t}_{\mathscr{O}}(E)_{r_i}).\end{eqnarray*}
(The proof is analogous to that of Lemma \ref{painintheneck}.)

For $u\in E_{r_i}$, we therefore have 
$\tr_{E/F}(uH_a) \in \fr{t}_{\mathscr{O},r_i}\subset \fr{t}^i_{r_i}\subset 
\fr{h}^{i}_{x, r_i}$ and
\begin{eqnarray*}
\psi(\tr_{E/F} (u X^*_i(H_a)))
&=&\psi(X^*_i(\tr_{E/F} (uH_a)))\\
&=&\phi (\exp (\tr_{E/F}(uH_a))) \\
&=&\phi (N_{E/F}(\exp (uH_a))) \\
&=&\phi (N_{E/F}(\exp (d\check a(u)))) \\
&=&\phi (N_{E/F}(\check a(1+u))) .
\end{eqnarray*}
Here, $\exp$ is the Moy-Prasad isomorphism $\fr{h}^{i}_{x, r_i:r_i+}\cong H^{i}_{x, r_i:r_i+}$, and we observe that $\exp$ restricts to the Moy-Prasad isomorphism $\fr{t}^i_{r_i:r_i+}\cong T^{i}_{r_i:r_i+}$.

We now observe  that $\phi |T_{\mathscr{O},r_i}\ne 1$ for all $\mathscr{O}\in \Gamma\bs \Phi^{i+1}$ and, according to Lemma \ref{painintheneck}, we also know that if $\mathscr{O}$ is the orbit of $a$ then $N_{E/F}(\check a (E^\times_{r_i})) = T_{\mathscr{O},r_i}$.  Therefore, there exists $u\in E_{r_i}$ such that
$\psi(\tr_{E/F} (u X^*_i(H_a)))\ne 1$.  
This implies $v_F (X^*_i (H_a))=-r_i$, which shows that condition {\bf GE1} of \cite{MR1824988} is satisfied.

Now we invoke Lemma 8.1 \cite{MR1824988} or condition (4) in Definition \ref{permissibledef} to conclude that Yu's condition {\bf GE2} is also satisfied.
\end{proof}

\subsection{Heisenberg $p$-groups and finite Weil representations}\label{sec:HeisWeil}

Fix $i\in \{ 0,\dots ,d-1\}$ and let $\mu_p$ be the group of complex $p$-th roots of unity.
We have a surjective homomorphism $$\zeta_i : J^{i+1}_+ \to \mu_p$$ which we have used to define a symplectic form on the space
$$W_i= J^{i+1}/J^{i+1}_+$$
by $$\langle uJ^{i+1}_+,vJ^{i+1}_+\rangle = \zeta_i (uvu^{-1}v^{-1}).$$
 We have also defined 
a (multiplicative) Heisenberg $p$-group structure on 
$$\mathscr{H}_i = W_i\times \mu_p$$
using the multiplication rule
$$(w_1,z_1)(w_2,z_2) = (w_1w_2, z_1z_2 \langle w_1,w_2\rangle^{(p+1)/2}).$$
The center of $\mathscr{H}_i$ is  $$\mathscr{Z}_i = 1\times \mu_p$$  and we use $z\mapsto (1,z)$ to identify $\mu_p$ with $\mathscr{Z}_i$.

We let $H_{x}$ act on $\mathscr{H}_i$ by
$$h\cdot (jJ_+^{i+1},z) = (hjh^{-1}J_+^{i+1},z).$$

We are interested in (central) group extensions of $W_i$ by $\mu_p$ that are isomorphic to $\mathscr{H}_i$ (as group extensions).  To say that an extension
$$1\to\mu_p\to \widetilde W_i\to W_i\to 1$$
of $W_i$ by $\mu_p$ is isomorphic to $\mathscr{H}_i$ as a group extension means that there exists an isomorphism $\iota  :\widetilde{W}_i\to \mathscr{H}_i$ such that the diagram 
$$\xymatrix{1\ar@{=}[d]\ar[r]&\mu_p\ar[r]\ar@{=}[d]&\widetilde{W}_i\ar[r]\ar[d]^\iota&W_i\ar[r]\ar@{=}[d]&1\ar@{=}[d]\\
1\ar[r]&\mathscr{Z}_i\ar[r]&\mathscr{H}_i\ar[r]&W_i\ar[r]&1}$$
commutes.  Such an isomorphism $\iota$ is called a {\it special isomorphism}.

(Note that, by the Five Lemma, if we merely assume $\iota$ is a homomorphism making the latter diagram commute then it is automatically an isomorphism.)

Recall that the cohomology classes in $H^2(W_i,\mu_p)$ correspond to group extensions 
$ \widetilde W_i$
of $W_i$ by $\mu_p$.
Given $\widetilde W_i$, we get a 2-cocycle  $$c: W_i\times W_i\to \mu_p$$ by choosing a section $s:W_i\to \widetilde W_i$ and defining $c$ by
$$s(w_1)s(w_2) = c(w_1,w_2)\, s(w_1w_2).$$
To say that $c$ is a 2-cocycle means that
$$c(w_1,w_2)\ c(w_1w_2,w_3) = (s(w_1)c(w_2,w_3)s(w_1)^{-1})\, c(w_1,w_2w_3).$$
Indeed, both sides of the latter equation equal
$$s(w_1)s(w_2)s(w_3)s(w_1w_2w_3)^{-1}.$$

Changing the section $s$ has the effect of modifying $c$ by a 2-coboundary.
Indeed, any other section $s'$ must be related to $s$ by $$s' (w) = f(w)s(w)$$ where $f$ is a function $f:W_i\to \mu_p$, and then $c' = c\cdot df$, where
$$(df)(w_1,w_2) =f(w_1) f(w_2)f(w_1w_2)^{-1}.$$

On the other hand, given a 2-cocycle $c$ we get an extension  $\widetilde{W}_i$ of $W_i$ by $\mu_p$ by taking $\widetilde{W}_i = W_i\times\mu_p$ with multiplication defined by 
$$(w_1,z_1)(w_2,z_2) = (w_1w_2, c (w_1,w_2) \, z_1z_2)$$ for some coycle
$c : W_i\times W_i\to \mu_p$.  
Associativity of multiplication is equivalent to the cocycle condition.

The cocycle
$$c(w_1,w_2) = \langle w_1,w_2\rangle^{(p+1)/2}$$
yields the Heisenberg $p$-group $\mathscr{H}_i$ and we are only interested in cocycles that are cohomologous to this cocycle.
In fact, we are only interested in special isomorphisms that come from  homomorphisms $\nu_i :J^{i+1}\to \mathscr{H}_i$.

\begin{definition}\label{spechom}
A {\it special homomorphism} is an $H_{x}$-equivariant homomorphism $\nu_i : J^{i+1}  \to \mathscr{H}_i$  that factors to a special isomorphism $J^{i+1}/\ker\zeta_i \to \mathscr{H}_i$.
\end{definition}

We observe that when $\nu_i$ is a  special homomorphism then
$\ker\nu_i = \ker\zeta_i$ and
the diagram 
$$\xymatrix{1\ar@{=}[d]\ar[r]&J^{i+1}_+/\ker\zeta_i\ar[r]\ar[d]^{\zeta_i}&J^{i+1}/\ker\zeta_i\ar[r]\ar[d]^{\nu_i}&W_i\ar[r]\ar@{=}[d]&1\ar@{=}[d]\\
1\ar[r]&\mathscr{Z}_i\ar[r]&\mathscr{H}_i\ar[r]&W_i\ar[r]&1}$$
commutes.

\bigskip\begin{remark}
Our notion of special homomorphism is a variant of the notion of ``relevant special isomorphism'' in \cite[Definition 3.17]{MR2431732} which, in turn, is derived from Yu's definition of ``special isomorphism''  \cite[\S10]{MR1824988}.  The existence of special homomorphisms follows from the existence of relevant special isomorphisms.  (See \cite[Lemma 3.22, Proposition 3.24]{MR2431732}.)
\end{remark}

\bigskip\begin{remark}
Both $\mathscr{H}_i$ and $J^{i+1}/\ker \zeta_i$ are Heisenberg $p$-groups that are canonically associated to $\rho$.   Special homomorphisms yield isomorphisms between these two Heisenberg $p$-groups, but the existence of automorphisms of the Heisenberg $p$-groups leads to the lack of uniqueness of special isomorphisms.  One can choose canonical special homomorphisms, however, there is no one canonical choice that is most convenient for all applications.  (These matters are discussed in \cite[\S3.3]{MR2431732}.)
\end{remark}

\bigskip
We have chosen a Heisenberg representation $(\tau_i ,V_i)$ of $\mathscr{H}_i$ and we have pulled back $\tau_i$ to a representation $(\tau_{\nu_i},V_i)$ of $J^{i+1}$.  Thus $\tau_{\nu_i} =\tau_i\circ \nu_i$.
We have extended $\tau_i$ to a representation $\hat\tau_i$ of $\mathscr{S}_i\ltimes \mathscr{H}_i$ on $V_i$ and we have let $\omega_i (h) = \hat\tau_i ({\rm Int}(h),1)$, for all $h\in H_{x}$.

The next lemma follows routinely from the definitions, but we include a proof to make evident where the $H_{x}$-equivariance of $\nu_i$ is used.

\begin{lemma}\label{taunuiidentity}
$$\tau_{\nu_i}(h^{-1}jh) = \omega_i(h)^{-1}\tau_{\nu_i}(j)\, \omega_i (h),$$ whenever $j\in J^{i+1}$ and $h\in H_{x}$.
\end{lemma}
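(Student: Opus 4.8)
The plan is to prove the identity by transporting a conjugation formula from the finite group $\mathscr{S}_i \ltimes \mathscr{H}_i$ to $\GL(V_i)$ via $\hat\tau_i$, and then to feed in the $H_x$-equivariance of $\nu_i$ at the very end.

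First I would record some elementary bookkeeping inside $\mathscr{S}_i \ltimes \mathscr{H}_i$, all immediate from the multiplication rule $(s_1,h_1)(s_2,h_2) = (s_1s_2,(s_2^{-1}h_1)h_2)$: the maps $w \mapsto (1,w)$ and $s \mapsto (s,1)$ are homomorphisms from $\mathscr{H}_i$ and $\mathscr{S}_i$ respectively; one has $(s,1)^{-1} = (s^{-1},1)$; and, crucially,
$$(s,1)\,(1,w)\,(s,1)^{-1} = (1,\,s\cdot w), \qquad s \in \mathscr{S}_i,\ w \in \mathscr{H}_i.$$
In particular $h \mapsto ({\rm Int}(h),1)$ is a homomorphism $H_x \to \mathscr{S}_i \ltimes \mathscr{H}_i$, so $\omega_i = \hat\tau_i \circ ({\rm Int}(\cdot),1)$ is a homomorphism $H_x \to \GL(V_i)$, whence $\omega_i(h)^{-1} = \omega_i(h^{-1})$. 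Also, since $\hat\tau_i$ extends $\tau_i$, we have $\hat\tau_i(1,w) = \tau_i(w)$ for $w \in \mathscr{H}_i$.

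Next I would apply $\hat\tau_i$ to the displayed conjugation identity with $s = {\rm Int}(h)$, which yields
$$\omega_i(h)\,\tau_i(w)\,\omega_i(h)^{-1} = \tau_i({\rm Int}(h)\cdot w), \qquad h \in H_x,\ w \in \mathscr{H}_i.$$
Now I specialize $w = \nu_i(j)$ for $j \in J^{i+1}$. Since $\nu_i$ is a special homomorphism, it is $H_x$-equivariant, so ${\rm Int}(h)\cdot \nu_i(j) = \nu_i(hjh^{-1})$; combining this with $\tau_{\nu_i} = \tau_i \circ \nu_i$ gives
$$\omega_i(h)\,\tau_{\nu_i}(j)\,\omega_i(h)^{-1} = \tau_{\nu_i}(hjh^{-1}).$$
Replacing $h$ by $h^{-1}$ and using $\omega_i(h^{-1}) = \omega_i(h)^{-1}$ produces the asserted formula $\tau_{\nu_i}(h^{-1}jh) = \omega_i(h)^{-1}\tau_{\nu_i}(j)\,\omega_i(h)$.

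I do not expect a genuine obstacle here: the argument is a routine unwinding of definitions. The only points requiring care are the choice of convention for the semidirect product (so that the twist appears on the correct side in the conjugation formula) and the verification that the automorphism ${\rm Int}(h)$ of $\mathscr{H}_i$ used to define $\omega_i$ is exactly the one induced on $W_i = J^{i+1}/J^{i+1}_+$ by conjugation in $J^{i+1}$, so that the equality $\nu_i(hjh^{-1}) = {\rm Int}(h)\cdot \nu_i(j)$ applies verbatim. As the lemma's own remark indicates, the whole content of the statement is precisely this use of the $H_x$-equivariance of $\nu_i$.
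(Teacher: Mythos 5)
Your proof is correct and is essentially the paper's argument, just unpacked: the paper's three-line computation silently combines the semidirect-product conjugation identity $\hat\tau_i(1,s\cdot w)=\hat\tau_i(s,1)\hat\tau_i(1,w)\hat\tau_i(s,1)^{-1}$ with the $H_x$-equivariance of $\nu_i$, which are precisely the two ingredients you isolate and apply in sequence. Both proofs hinge on the same single nontrivial input, the equivariance of the special homomorphism; the rest is bookkeeping in $\mathscr{S}_i\ltimes\mathscr{H}_i$, which you verify explicitly and the paper leaves implicit.
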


\begin{proof} Given $j\in J^{i+1}$ and $h\in H_{x}$, our assertion follows from the computation
\begin{eqnarray*}
\tau_{\nu_i}(h^{-1}jh)
&=&\hat\tau_i (1,\nu_i (h^{-1}jh))\\
&=&\hat\tau_i (1,{\rm Int}(h)^{-1}\nu_i(j))\\
&=& \omega_i(h)^{-1}\tau_{\nu_i}(j)\, \omega_i(h).
\end{eqnarray*}
\end{proof}

\subsection{Verifying that $\hat\phi$ is well defined}\label{sec:hatphiwelldef}

Recall from \S\ref{sec:construction} the following definitions:
\begin{eqnarray*}
 L^{i+1}_+&=& G^{i+1}_{\rm der}\cap J^{i+1}_{++},\\
L_+&=& (\ker \phi)L^1_+\cdots L^d_+,\\
K_+&=& H_{x,0+}L_+,\\
\bH^{i} &=& \bG^{i+1}_{\rm der}\cap \bH.
\end{eqnarray*}
We have also stated a provisional definition for the character $\hat\phi$ of $K_+$, namely, $\hat\phi = {\rm inf}_{H_{x,0+}}^{K_+}\kern-.3em (\phi )$  or, equivalently, $\hat\phi (h\ell)= \phi( h)$, when $h\in H_{x,0+}$ and $\ell\in L_+$.  

The  purpose of this section is to verify  that the definition of $\hat\phi$ make sense.

\begin{lemma}\label{Kplusidentity}
\begin{itemize}
\item[{\rm (1)}] $L_+$ is a group that is normalized by $H_{x,0+}$.
\item[{\rm (2)}] $K_+ = \vec{G}_{x,(0+,s_0+,\dots, s_{d-1}+)}= H_{x,0+}J_+$.
\item[{\rm (3)}] $H \cap L_+ = \ker \phi$.
\item[{\rm (4)}] $\hat\phi$ is a well-defined character of $K_+$.
\item[{\rm (5)}] $\hat\phi (h) =\zeta_i (h)$, when $h\in G^{i+1}_{{\rm der}}\cap J^{i+1}_+$ and $i\in \{ 0,\dots , d-1\}$.
\end{itemize}
\end{lemma}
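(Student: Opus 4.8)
The plan is to prove the five assertions roughly in the order stated, since the later ones depend on the earlier ones. For (1), I would first observe that each $L^{i+1}_+ = \bG^{i+1}_{\rm der}\cap J^{i+1}_{++}$ is a group: $J^{i+1}_{++} = (G^i,G^{i+1})_{x,(r_i+,s_i+)}$ is a group associated to an admissible sequence, and intersecting with the $F$-points of the algebraic subgroup $\bG^{i+1}_{\rm der}$ preserves this. The fact that $L_+ = (\ker\phi)L^1_+\cdots L^d_+$ is a group, and is normalized by $H_{x,0+}$, I would get from Lemma \ref{AdlerSpice} (together with the remark following it that ``$H_1$ normalizes $H_2$'' iff $[H_1,H_2]\subset H_2$): one computes the relevant commutators $[J^{i+1}_{++},J^{j+1}_{++}]$, $[H_{x,0+},J^{i+1}_{++}]$, and $[H_{x,0+},\ker\phi]$ using the admissible-sequence bookkeeping, checking each lands back inside $L_+$. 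Here one uses $s_i = r_i/2$ and $0\le r_0<\cdots<r_{d-1}\le r_d$ so the $c_j$-formula of Lemma \ref{AdlerSpice} gives depths large enough to stay inside the various $J^{k+1}_{++}$'s or inside $\ker\phi$. The intersection with $\bG^{i+1}_{\rm der}$ is compatible with all these commutator inclusions because commutators of elements of a subgroup stay in that subgroup.

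For (2), the identification $K_+ = \vec G_{x,(0+,s_0+,\dots,s_{d-1}+)} = H_{x,0+}J_+$ should come from the Bruhat--Tits/Yu factorization machinery of \S\ref{sec:expsection}: the group associated to the admissible sequence $(0+,s_0+,\dots,s_{d-1}+)$ factors (via the mock exponential and the bijective multiplication map $\mu$, or via Yu's Lemma 1.3 / Corollary 2.4 of \cite{MR1824988}) as $H_{x,0+}$ times the product of the $J^{i+1}_+$'s. The subtlety is that $K_+$ was defined as $H_{x,0+}L_+$ with $L_+$ built from the $L^{i+1}_+ = \bG^{i+1}_{\rm der}\cap J^{i+1}_{++}$ and $\ker\phi$, not directly from the $J^{i+1}_+$. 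So this step requires showing $H_{x,0+}(\ker\phi)L^1_+\cdots L^d_+ = H_{x,0+}J^1_+\cdots J^d_+$; the key point is Lemma \ref{goodp} applied with $\bH$ replaced by $\bG^{i+1}$ (as in the factorization remark in \S\ref{sec:recovery}), which identifies $[G^{i+1},G^{i+1}]\cap$ (the relevant Moy--Prasad piece) with $G^{i+1}_{\rm der,x,\cdot}$, letting one trade the ``der'' pieces $L^{i+1}_+$ and the group $\ker\phi$ (on which $\phi$ is trivial, and which together with $H_{x,0+}$ accounts for all of $H_{x,0+}J^{i+1}_+\cap H$) for the full $J^{i+1}_+$'s, up to the slack between $J^{i+1}_+$ and $J^{i+1}_{++}$ which is absorbed. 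I would run the argument inductively on $i$.

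For (3), $H\cap L_+ = \ker\phi$: the inclusion $\supseteq$ is clear since $\ker\phi\subset H$. For $\subseteq$, an element of $H\cap L_+$ lies in $H\cap (\ker\phi)L^1_+\cdots L^d_+$; since each $L^{i+1}_+\subset J^{i+1}_{++}$ and (by the factorization in (2) and the directness of the Bruhat--Tits decomposition) $H\cap J^1_{++}\cdots J^d_{++}$ meets $H$ only inside $H_{x,r_{\rm something}+}\subset\ker\phi$ — more precisely, $H\cap J_{++}$ is contained in $\ker\phi$ because $\phi$ has depth $r_d\ge r_i$ and the relevant $H$-part of $J^{i+1}_{++}$ lives at depth $>s_i$ but the ``$H^{i-1}$-direction'' is killed and the ``$\bZ^{i,i+1}$-direction'' only contributes at depths where $\phi$ restricted to $H^{i-1}$ already vanishes. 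I would make this precise using the decomposition $\fr h^i = \fr z^{i,i+1}\oplus\fr h^{i-1}$ from \S\ref{sec:dualcoset} and the fact that $\phi|H^{i-1}_{x,r_i}=1$ together with $\phi|H^\flat_{{\rm der},x,0+}=1$. This is the step I expect to be the main obstacle, because it requires carefully tracking which root directions in the admissible-sequence factorization of $J_{++}$ actually meet $H$ and showing $\phi$ kills all of them; the cleanest route is probably to reduce via (2) to showing $H\cap H_{x,0+}J_{++}\subseteq H_{x,0+}$-part-on-which-$\phi$-is-trivial, then invoke directness of the $\mu$-factorization.

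Given (1)--(3), assertion (4) is formal: $\hat\phi(h\ell)=\phi(h)$ is well-defined as a function on $K_+ = H_{x,0+}L_+$ precisely because if $h\ell = h'\ell'$ with $h,h'\in H_{x,0+}$, $\ell,\ell'\in L_+$, then $h^{-1}h' = \ell(\ell')^{-1}\in H\cap L_+ = \ker\phi$ by (3), so $\phi(h)=\phi(h')$; and it is a homomorphism because $L_+$ is normalized by $H_{x,0+}$ by (1) and $\phi$ is trivial on $L_+\cap H$, a standard inflation-of-character verification (one checks $\hat\phi(h_1\ell_1 h_2\ell_2) = \hat\phi(h_1 h_2\cdot (h_2^{-1}\ell_1 h_2)\ell_2) = \phi(h_1h_2) = \phi(h_1)\phi(h_2)$ using $(h_2^{-1}\ell_1h_2)\ell_2\in L_+$). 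Finally (5): for $h\in G^{i+1}_{\rm der}\cap J^{i+1}_+ = L^{i+1}_+$'s ambient-at-level-$J^{i+1}_+$ analogue — wait, more carefully, $h\in G^{i+1}_{\rm der}\cap J^{i+1}_+$, write $h = 1\cdot h$ as an element of $H_{x,0+}L_+$ (it lies in $L_+$ once we check $G^{i+1}_{\rm der}\cap J^{i+1}_+\subset L_+$; this holds modulo $\ker\phi$ and the $L^j_+$'s, using $J^{i+1}_+\subset$ product of $J^{j+1}_{++}$'s up to $\ker\phi$ by the slack argument again). Then $\hat\phi(h)=\phi(1)=1$?? — no: the point is that $h$ need not be decomposable with trivial $H_{x,0+}$-part; rather one writes $h = h_0\ell$ with $h_0\in H_{x,0+}$, and $\hat\phi(h)=\phi(h_0)$, and one must identify this with $\zeta_i(h)$. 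I would instead argue: by the definition of $\zeta_i$ in \S\ref{sec:construction} as the character of $J^{i+1}_+$ represented by $X^*_i\in(\phi|Z^{i,i+1}_{r_i})^*$, and by the compatibility between $\hat\phi$ and $\phi$ and the dual-coset description of $\phi|H^i_{x,r_i:r_i+}$ from \S\ref{sec:dualcoset}, the restriction of $\hat\phi$ to $G^{i+1}_{\rm der}\cap J^{i+1}_+$ and $\zeta_i$ are both the inflation to $J^{i+1}_+$ of the same $\bG^{i+1}$-generic character of $G^i_{x,r_i:r_i+}$ pulled back along the appropriate quotient map; so they agree. This last step is essentially bookkeeping once the definitions in \S\ref{sec:dualcoset} and \S\ref{sec:construction} are unwound, so I would keep it brief.
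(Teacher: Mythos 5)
Your outline for (1) matches the paper's argument, and your (4) is the same standard inflation verification the paper gives once (1) and (3) are in hand. But there are real gaps in (2), (3), and (5).

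For (2), you invoke Lemma \ref{goodp} with $\bH$ replaced by $\bG^{i+1}$ and worry about ``trading the der pieces $L^{i+1}_+$ and $\ker\phi$ for the full $J^{i+1}_+$'s.'' That is a detour: the factor $\ker\phi$ sits inside $H_{x,0+}$, so $H_{x,0+}L_+ = H_{x,0+}\prod_i L^{i+1}_+$ trivially, and no fundamental-group hypothesis enters. What the paper actually uses is that the root groups $\bU_a$ with $a\in\Phi^{i+1}-\Phi^i$ automatically lie in $\bG^{i+1}_{\rm der}$, giving
$\bG^{i,i+1}(E)_{x,s_i+}\subseteq\bG^{i+1}_{\rm der}(E)\cap(\bG^i,\bG^{i+1})(E)_{x,(r_i+,s_i+)}$, and then Yu's Lemma~1.4 together with \cite[Proposition 6.4.48]{MR0327923} to regroup the root-group factors by successive $\Phi^{i+1}-\Phi^i$, yielding $\vec G_{x,(0+,s_0+,\dots,s_{d-1}+)}=H_{x,0+}\prod_i L^{i+1}_+$. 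The other identity $\vec G_{x,(0+,\dots)}=H_{x,0+}J_+$ is quoted directly from Yu.

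For (3), which you flag as ``the main obstacle'' requiring careful tracking of which root directions meet $H$: this is not needed, and as stated your plan would be hard to carry out cleanly. The intersection $H\cap L^{i+1}_+$ is computed directly from the definition of $J^{i+1}_{++}=(G^i,G^{i+1})_{x,(r_i+,s_i+)}$: the $G^i$-part sits at depth $r_i+$, so $H\cap L^{i+1}_+ = G^{i+1}_{\rm der}\cap H_{x,r_i+} = H^i_{x,r_i+}$, and this lies in $\ker\phi$ precisely because $r_i$ was \emph{defined} as the depth of $\phi|H^i_{x,0+}$. One line, no appeal to the decomposition $\fr h^i=\fr z^{i,i+1}\oplus\fr h^{i-1}$ and no directness argument about the $\mu$-factorization.

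For (5), after backtracking from $h=1\cdot h$, you gesture at dual-coset bookkeeping, but you never produce a factorization of $G^{i+1}_{\rm der}\cap J^{i+1}_+$ on which to compare. The paper's proof establishes $G^{i+1}_{\rm der}\cap J^{i+1}_+ = H^i_{x,r_i}\,L^1_+\cdots L^{i+1}_+$ by the same Bruhat--Tits regrouping as in (2), then checks $\hat\phi=\zeta_i$ factor by factor: on $H^i_{x,r_i}$ both are represented by any $X^*_i$ in the dual coset; on $L^{j+1}_+$ for $j<i$, $\hat\phi$ is trivial by construction while $\zeta_i$ is trivial because $(\phi|Z^{i,i+1}_{r_i})^*\subset(\fr z^{i,*})_{-r_i}$ is orthogonal to $\fr g^i_{{\rm der},x,r_i}$ and $L^{j+1}_+\subset G^i_{\rm der}\cap G^i_{x,r_i}$; on $L^{i+1}_+$ both are trivial. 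Without that intermediate factorization your comparison has nothing to hang on.
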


\bigskip
\begin{remark}
Our definition of $K_+$ is different than the definition  in \cite{MR1824988} and \cite{MR2431732}, however, Lemma \ref{Kplusidentity} shows that the different definitions coincide.  The notation $\vec{G}_{x,(0+,s_0+,\dots, s_{d-1}+)}$ is a special case of  the notation from \S\ref{sec:expsection} for groups associated to admissible sequences.
\end{remark}

\begin{proof}[Proof of Lemma \ref{Kplusidentity}]
By definition,  $L_+ =(\ker \phi)L^1_+\cdots L^d_+$ and since $H_{x,0+}$ normalizes each factor $L^j_+$, so does $\ker \phi$.
Moreover, $L^{j_1}_+$ normalizes $L^{j_2}_+$ whenever $j_1\le j_2$.  (See Remark \ref{AdlerSpice}.)
It follows from induction on $i$ that the sets $$(\ker \phi)L^1_+\cdots L^i_+$$ are groups, for $i = 1,\dots ,d$.
This yields (1).

Now suppose $i\in \{ 0,\dots , d-1\}$ and let $$\bG^{i,i+1}(E)_{x,s_i+}=\prod_{a\in \Phi^{i+1}-\Phi^i} {\bf U}_a (E)_{x,s_i+},$$ where ${\bf U}_a$ is the root group associated to the root $a$. In the latter definition, we assume we have fixed an ordering of the set $\Phi^{i+1} -\Phi^i$ and we use this ordering to determine the order of multiplication in the product.  
The resulting set depends on this ordering.


We observe that
$$\bG^{i,i+1}(E)_{x,s_i+}\subseteq \bG^{i+1}_{\rm der}(E)\cap (\bG^i,\bG^{i+1})(E)_{x,(r_i+,s_i+)}\subseteq \bG^{i+1}(E)_{x,s_i+}$$
and, according to \cite[Lemma 1.4]{MR1824988},
$$\vec{\bG}(E)_{x,(0+,s_0+,\dots, s_{d-1}+)}
= \bH (E)_{x,0+}\bG (E)^{1}_{x,s_0+}\cdots \bG (E)^{d}_{x,s_{d-1}+},$$ 
The latter identity may be sharpened as
$$\vec{\bG}(E)_{x,(0+,s_0+,\dots, s_{d-1}+)}
= \bH (E)_{x,0+}\bG (E)^{0,1}_{x,s_0+}\cdots \bG (E)^{d-1,d}_{x,s_{d-1}+},$$ where we are using \cite[Lemma 1.4]{MR1824988}
and 
 \cite[Proposition 6.4.48]{MR0327923}.
 
 More precisely, the Bruhat-Tits result can be used to show that one can rearrange the various products so that the contributions of the individual root groups occur in any  specified order.  In particular, one can first group together the roots from $\Phi^0$, then the roots from $\Phi^1 - \Phi^0$, and so forth.

 It follows that
 $$\vec{\bG}(E)_{x,(0+,s_0+,\dots, s_{d-1}+)}\!\!
=\! \bH (E)_{x,0+}\!\prod_{i=0}^{d-1}\!\! \left(\! \bG^{i+1}_{\rm der}(E)\! \cap \! (\bG^i,\bG^{i+1})(E)_{x,(r_i+,s_i+)}\right)\! .$$

Using the Bruhat-Tits result or an argument as in the proof of \cite[Lemma 2.10]{MR1824988}, we have:
\begin{eqnarray*}
\vec{G}_{x,(0+,s_0+,\dots, s_{d-1}+)}
&=& H_{x,0+}\!\prod_{i=0}^{d-1} L^{i+1}_+\\
&=& H_{x,0+}L_+ = K_+.
\end{eqnarray*}
The identity $$\vec{G}_{x,(0+,s_0+,\dots, s_{d-1}+)}= H_{x,0+}J_+$$ is established in \cite{MR1824988}.  Thus we have proven (2).

Assertion (3) follows from:
$$H\cap L^{i+1}_+ = G^{i+1}_{\rm der}\cap H_{x,r_i+} = H^i_{x,r_i+}\subset H^i_{x,r_{i+1}}\subset \ker \phi .$$

We have defined $\hat\phi$ by $\hat\phi (h\ell) = \phi (h)$, for $h\in H_{x,0+}$ and $\ell\in L_+$.  The fact  that this is a well-defined function on $K_+$ is a consequence of the fact that $H_{x,0+}\cap L_+ =\ker \phi$.
We now use (1) and the computation
\begin{eqnarray*}
\hat\phi (h_1\ell_1h_2\ell_2)&=&
\hat\phi (h_1h_2(h_2^{-1}\ell_1 h_2)\ell_2)\\
&=&\phi(h_1h_2)\\
&=&\hat\phi (h_1\ell_1)\hat\phi(h_2\ell_2),
\end{eqnarray*}
for $h_1,h_2\in H_{x,0+}$ and $\ell_1,\ell_2\in L_+$
to deduce that $\hat\phi$ is a character and prove (4).

Finally, we prove (5).  The methods used earlier in this proof can be used (with essentially no modification) to show that 
$$G^{i+1}_{\rm der}\cap J^{i+1}_+ = H^i_{x,r_i} L^1_+\cdots L^{i+1}_+.$$
We will show that $\hat\phi$ and $\zeta_i$ agree on each of the factors on the right hand side of the latter identity.

On the factor $H^i_{x,r_i}$, the characters $\hat\phi$ and $\zeta_i$ coincide since both are represented by any element ${X}^*_i$ in the dual coset $(\phi |Z^{i,i+1}_{r_i})^*$ defined in \S\ref{sec:dualcoset}.

Now consider a factor $L^{j+1}_+$, with $j\in \{ 0,\dots , i-1\}$.  By definition,  $\hat\phi$ is trivial on such a factor.  The fact that $\zeta_i$ is also trivial on $L^{j+1}_+$ follows from the fact that the dual coset $(\phi |Z^{i,i+1}_{r_i})^*$ is contained in $(\mathfrak{z}^{i,*})_{-r_i}$ which is orthogonal to $\mathfrak{g}^i_{{\rm der}, x,r_i}$.  Here, we are using the fact that $L^{j+1}_+ \subset G^i_{\rm der}\cap G^i_{x,r_i}$.  We also know that both $\hat\phi$ and $\zeta_i$ are trivial on the factor $L^{i+1}_+$.  This completes our proof.
\end{proof}

\subsection{Intertwining theory for Heisenberg $p$-groups}

The following section was prompted by an email message from Loren Spice and it was developed in the course of conversations with him.  It addresses an error in the proofs of Proposition 14.1 and Theorem 14.2 of \cite{MR1824988} that can be fixed using ingredients already present in \cite{MR1824988}.  

Roughly speaking, our point of view on intertwining is that two representations of overlapping groups intertwine when they can be glued together to form a larger representation of a larger group.  This approach reveals more of the inherent geometric structure present in Yu's theory.

As usual, $\bG$  will be a connected reductive $F$-group, but now $\bG'$ will be an $F$-subgroup  of $\bG$ that is a Levi subgroup over $E$.  In other words, $(\bG',\bG)$ is a twisted Levi sequence in $\bG$.  Let $x$ be a point in $\mathscr{B}_{\rm red}(\bG',F)$.

As in \cite[\S9]{MR1824988}, we put $J= (G',G)_{x,(r,s)}$, $J_+= (G',G)_{x,(r,s+)}$
and we let $\phi$ be a $\bG$-generic character of $G'_{x,r}$ of depth $r$.  We let $\zeta$ denote the  character   of $J_+$ associated to $\phi$.  (In other words, $\zeta$ is the character denoted by $\hat\phi |J_+$ in \cite{MR1824988}.)

We remark that the quotient $W= J/J_+$ is canonically isomorphic to the corresponding Lie algebra quotient and thus our discussion could be carried out on the Lie algebra.

For subquotients $S$ of $G$, let ${}^g S$ denote the subquotient obtained by conjugating by $g$.
For a representation $\pi$ of a subquotient $S$ of $G$, let ${}^g\pi$ be the representation of ${}^g S$ given by ${}^g \pi (s) = \pi (g^{-1}sg)$.

Let $\Phi = \Phi (\bG,\bT)$, $\Phi' = \Phi (\bG',\bT)$ and 
let $\Phi_0$ be the set of roots $a\in \Phi$ such that $a(gx-x)=0$.

We define groups $J_0$ and $J_{0,+}$ by imitating the definitions of $J$ and $J_+$ except that we only use roots that lie in $\Phi_0$.

More precisely, let $J_0 (E)$ be the group generated by $\bT(E)_r$ and the groups $\bU_a(E)_{x,r}$, with $a\in \Phi'\cap \Phi_0$, and the groups $\bU_a(E)_{x,s}$, with $a\in (\Phi -\Phi')\cap \Phi_0$.  Define $J(E)_{0,+}$ similarly, but replace $s$ by $s+$.  Now let $J_0 = J(E)_0\cap G$ and $J_{0,+}= J(E)_{0,+}\cap G$.

Let $\mu_p$ be the group of complex $p$-th roots of unity.  Note that all of our symplectic forms are multiplicative and take values in $\mu_p$.  We let $I_p$ denote the identity map on $\mu_p$ viewed as a (nontrivial) character of $\mu_p$.

Define $W_0$ to be the space $J_0/J_{0,+}$, viewed as a (multiplicative) $\F_p$-vector space with $\mu_p$-valued symplectic form given by 
$$\langle uJ_{0,+},vJ_{0,+}\rangle = \zeta (uvu^{-1}v^{-1} )= {}^g \zeta (uvu^{-1}v^{-1} ).$$
(The latter equality follows from \cite[Lemma 9.3]{MR1824988}.)

The embedding $J_0\hookrightarrow J$ yields an embedding $W_0\hookrightarrow W$ such that the symplectic form on $W$, given by
$$\langle uJ_{+},vJ_{+}\rangle = \zeta (uvu^{-1}v^{-1} ),$$
 restricts to the symplectic form on $W_0$.
 
Similarly, the embedding ${}^g J_0\hookrightarrow {}^g J$ yields an embedding $W_0\hookrightarrow {}^gW$ such that the symplectic form on ${}^gW$, given by
$$\langle u\ {}^gJ_{+},v\ {}^g J_{+}\rangle = {}^g\zeta (uvu^{-1}v^{-1} ),$$
 restricts to the symplectic form on $W_0$.
 
 We therefore have a fibered sum (a.k.a., pushout) diagram 
$$\xymatrix{
W_0\ar[r]\ar[d]&{}^gW\ar[d]\\
W\ar[r]&W^\star .
}$$
The fibered sum $$W^\star = W \sqcup_{W_0} {}^gW$$ is
the quotient of $W\times {}^g W$ by the subgroup of elements $(w_0,w_0^{-1})$, where $w_0$ varies over $W_0$.

Let $W_1$ denote the image of ${}^gJ_+\cap J$ in $W$.  Then, according to \cite[Remark 12.7]{MR1824988}, the space $W_1$ is totally isotropic and its  orthogonal complement $W_1^\perp$ in $W$ is identical to the image of ${}^gJ\cap J$ in $W$.  We have $W_1^\perp = W_1\oplus W_0$.

As with $W_0$, Yu (canonically) defines a subgroup $J_1$ such that $W_1 = J_1J_+/J_+$, except that he uses the roots $a$ such that $a(x-gx)<0$.
He also defines a subgroup $J_3$ by using the roots $a$ such that $a(x-gx)>0$.  Taking $W_3 = J_3J_+/J_+$, gives a vector space such that we have an orthogonal direct sum decomposition $W = W_{13}\oplus W_0$, where the nondegenerate space $W_{13}$ has polarization $W_1\oplus W_3$.

We also have canonical identifications $$W= W_{13}\oplus W_0$$  and $${}^gW = W_0\oplus {}^g W_{13}.$$   (See Lemmas 12.8 and 13.6 \cite{MR1824988}.)
Since the symplectic forms that $W_0$ inherits from $W$ and ${}^gW$ coincide, the space $W^\star$ inherits a nondegenerate symplectic structure such that 
$$W^\star = W_{13}\oplus W_0\oplus {}^gW_{13}$$ is an orthogonal direct sum of nondegenerate symplectic spaces.

%
%


Let $$\mathscr{H}^\star = W^\star \times \mu_p$$ with the multiplication
$$(w_1,z_1)(w_2,z_2) = (w_1w_2, z_1z_2\langle w_1,w_2\rangle^{(p-1)/2}).$$
Let $(\tau^\star , V_{\tau^\star})$ be a Heisenberg representation of $\mathscr{H}^\star$ whose central character is the identity map $I_p$ on $\mu_p$.

For concreteness, one can realize $\tau^\star$ as follows.
Fix a polarization $$W_0 = W_2\oplus W_4$$ of $W_0$ and define a polarization $$W^\star = W^{\star +}\oplus W^{\star -}$$ by taking
$$W^{\star +} = W_1\oplus W_2 \oplus {}^gW_1$$ and
$$W^{\star -} = W_3\oplus W_4 \oplus {}^gW_3.$$
Now take $$\tau^\star = {\rm Ind}_{W^{\star +}\times\mu_p}^{\mathscr{H}^\star}(1\times I_p ).$$
Restriction of functions from $\mathscr{H}^\star$ to $W^{\star -} = W^{\star -}\times 0$ identifies the space $V_{\tau^\star}$ of $\tau^\star$ with the space $\C [W^{\star -}]$ of complex-valued functions on $W^{\star -}$.

Let $\mathscr{S}^\star = {\rm Sp}(W^\star)$ be the symplectic group of $W^\star$ and let $$\hat\tau^\star :\mathscr{S}^\star \ltimes \mathscr{H}^\star \to \GL (V_{\tau^\star})$$  be the unique extension of $\tau^\star$.  We call this the ``Weil-Heisenberg representation'' of $\mathscr{S}^\star \ltimes \mathscr{H}^\star$.  Its restriction
$$\omega^\star :\mathscr{S}^\star  \to \GL (V_{\tau^\star})$$
to $\mathscr{S}^\star$ is called the ``Weil representation of $\mathscr{S}^\star$.''



Let $\mathscr{H} = W\times \mu_p$ be the Heisenberg group associated to $W$, and embed $\mathscr{H}$ in $\mathscr{H}^\star$ in the obvious way.

In our above model for the Heisenberg representation $\tau^\star$, the space $$V_\tau = V_{\tau^\star}^{{}^gW_{1}} $$ of ${}^g W_1$-fixed vectors corresponds to the space $\C [W^- ]$ of functions in $\C [W^{\star -}]$ that are supported in the totally isotropic space $$W^-  = W_3\oplus W_4.$$
It is an irreducible summand in the decomposition of $\tau^\star |\mathscr{H}$, and restriction of functions from $\mathscr{H}^\star$ to $\mathscr{H}$ identifies $V_\tau$ with the space of the Heisenberg representation
$$\tau = {\rm Ind}_{W^+\times \mu_p}^{\mathscr{H}}(1\times I_p),$$ where $$W^+ = W_1\oplus W_2.$$

Similarly, we take 
$$V_{{}^g \tau} = V_{\tau^\star}^{W_{1}} $$ and observe that this corresponds to the space $\C [{}^gW^-]$.  Note that
$${}^g W^- = W_4\oplus {}^g W_3$$ since ${}^g W_4$ and $W_4$ are identified in $W^\star$.
Restriction of functions from $\mathscr{H}^\star$ to ${}^g\mathscr{H}$ identifies $V_{{}^g\tau}$ with the space of the Heisenberg representation
$${}^g\tau = {\rm Ind}_{{}^gW^+\times \mu_p}^{{}^g\mathscr{H}}(1\times I_p),$$ where $${}^g W^+ = {}^g W_1\oplus W_2.$$


Let $\mathscr{H}_0$ be the subgroup $$\mathscr{H}_0 =\mathscr{H}\cap {}^g\mathscr{H}= W_0\times \mu_p$$ of $\mathscr{H}^\star$.  This is the Heisenberg group associated to $W_0$.
One may view $\mathscr{H}^\star$ as a fibered sum $$\mathscr{H}^\star = \mathscr{H}\sqcup_{\mathscr{H}_0} {}^g\mathscr{H}.$$

Let $$V_{\tau_0} = V_\tau \cap V_{{}^g\tau}= V_{\tau^\star}^{W_{1}\times {}^gW_{1}}.$$
This is a common irreducible summand in the decompositions of $\tau^\star |\mathscr{H}_0$, $\tau |\mathscr{H}_0$ and ${}^g\tau|\mathscr{H}_0$ into irreducibles.
Restriction of functions from $\mathscr{H}^\star$ to $\mathscr{H}_0$ identifies $V_{\tau_0}$ with the space of 
$$\tau_0 = {\rm Ind}_{W_2\times \mu_p}^{\mathscr{H}_0}(1\times I_p).$$
Restricting functions to $W_4 = W_4\times 0$ identifies $V_{\tau_0}$ with $\C [W_4]$.

At this point, we have embedded the representations $\tau$ and ${}^g\tau$ within the representation $\tau^\star$ and shown that they are identical on the overlap of their domains $\mathscr{H}_0 =\mathscr{H}\cap {}^g\mathscr{H}$.
The explicitly exhibits that $\tau$ and ${}^g\tau$ intertwine or, in other words, that the space
${\rm Hom}_{\mathscr{H}_0}({}^g\tau ,\tau)$ is nonzero.  In fact, Yu shows that the latter space has dimension one, and hence $\tau_0$ occurs uniquely in $\tau$ and in ${}^g\tau$.

We turn now to the intertwining of the Heisenberg-Weil and Weil representations.

The symplectic group $\mathscr{S} = {\rm Sp}(W)$ embeds in $\mathscr{S}^\star$ in an obvious way.  The Weil-Heisenberg representation $\hat\tau^\star$ restricts to the Weil-Heisenberg representation $$\hat\tau :\mathscr{S}\ltimes \mathscr{H}\to \GL( V_\tau)$$ that extends $\tau$.  Let $\omega$ denote the resulting Weil representation of $\mathscr{S}$.

Similar remarks apply with the roles of $W$ and ${}^gW$ interchanged and we use the natural notations in this context.

The symplectic group $\mathscr{S}_0 = {\rm Sp}(W_0)$ has a natural embedding in $\mathscr{S}^\star$ as  a proper subgroup of $\mathscr{S}\cap {}^g\mathscr{S}$.
We obtain the Weil-Heisenberg representation $$\hat\tau_0 :\mathscr{S}_0 \ltimes \mathscr{H}_0\to \GL(V_{\tau_0})$$ by restricting any of the Weil-Heisenberg representations $\hat\tau^\star$, $\hat\tau$ or ${}^g\hat\tau$.
Let $\omega_0$ be the associated Weil representation of $\mathscr{S}_0$.

At this point, it is easy to deduce that $\omega$ and ${}^g\omega$ intertwine, however, we caution that it is not enough to simply observe 
 that $\omega$ and ${}^g\omega$ both restrict to $\omega_0$ on $V_{\tau_0}$, since $\mathscr{S}_0$ does not coincide with $\mathscr{S}\cap {}^g\mathscr{S}$.
 
 Instead, we simply observe that
the group $\mathscr{S}\cap {}^g\mathscr{S}$ stabilizes $V_{\tau_0}$ and thus
 $V_{\tau_0}$ may be viewed as a common $(\mathscr{S}\cap {}^g\mathscr{S})\ltimes \mathscr{H}_0$-submodule of $\hat\tau$ and ${}^g\hat\tau$.
 We have now shown:
 
 \begin{lemma}\label{pintertwine}
 The intertwining space ${\rm Hom}_{\mathscr{H}_0}({}^g\tau,\tau)$ is 1-dimensional and identical to
 ${\rm Hom}_{({}^g \mathscr{S}\cap \mathscr{S})\ltimes \mathscr{H}_0}({}^g \hat\tau ,\hat\tau)$.  One can associate to each complex number $c$ an intertwining operator $\mathscr{I}_c: V_{{}^g\tau}\to V_\tau$  by taking
 \begin{itemize}
\item  $\mathscr{I}_c(v) = cv$ for all  $v\in V_{\tau_0}$,
\item  $\mathscr{I}_c \equiv 0$ on all  irreducible $\mathscr{H}_0$-modules other  than $V_{\tau_0}$ occurring  $V_{{}^g\tau}$.
\end{itemize}
 \end{lemma}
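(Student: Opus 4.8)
The plan is to assemble the lemma from the explicit model built in the preceding paragraphs; once one grants, as recorded there (following Yu), that ${\rm Hom}_{\mathscr{H}_0}({}^g\tau,\tau)$ is one-dimensional and that $V_{\tau_0}$ occurs with multiplicity one in each of $\tau|\mathscr{H}_0$ and ${}^g\tau|\mathscr{H}_0$, most of what remains is bookkeeping.

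First I would check that the operators $\mathscr{I}_c$ are well defined. Because $V_{\tau_0}$ is an $\mathscr{H}_0$-submodule of $V_{{}^g\tau}$ occurring with multiplicity one, the sum of the remaining $\mathscr{H}_0$-isotypic components of $V_{{}^g\tau}$ is a canonical $\mathscr{H}_0$-stable complement of $V_{\tau_0}$, so the prescription ``$c$ on $V_{\tau_0}$, zero on that complement'' unambiguously defines a linear map $\mathscr{I}_c:V_{{}^g\tau}\to V_\tau$ (using $V_{\tau_0}\subseteq V_\tau$). Since $V_{\tau_0}$ is an $\mathscr{H}_0$-submodule of both sides and the complement is $\mathscr{H}_0$-stable, $\mathscr{I}_c\in{\rm Hom}_{\mathscr{H}_0}({}^g\tau,\tau)$, and as $\mathscr{I}_1\ne 0$ while this Hom-space is one-dimensional, the $\mathscr{I}_c$ exhaust it.

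Next I would upgrade the equivariance of $\mathscr{I}_c$ to the group $({}^g\mathscr{S}\cap\mathscr{S})\ltimes\mathscr{H}_0$. The group $\mathscr{S}\cap{}^g\mathscr{S}$ stabilizes $V_{\tau_0}$ (noted just before the lemma), and since it normalizes $\mathscr{H}_0$ it permutes the $\mathscr{H}_0$-isotypic components of $V_{{}^g\tau}$ while fixing the distinguished one $V_{\tau_0}$; hence it also stabilizes the canonical complement, and likewise inside $V_\tau$. Moreover $\hat\tau$ and ${}^g\hat\tau$ restrict to the same representation of $({}^g\mathscr{S}\cap\mathscr{S})\ltimes\mathscr{H}_0$ on $V_{\tau_0}$: on $\mathscr{H}_0$ this is Stone--von Neumann, and on $\mathscr{S}\cap{}^g\mathscr{S}$ it follows from the identifications $W=W_{13}\oplus W_0$ and ${}^gW=W_0\oplus{}^gW_{13}$ of \cite[Lemmas 12.8, 13.6]{MR1824988}, which exhibit both restrictions as the Weil representation $\omega_0$ of $\mathscr{S}_0$ extended compatibly to $\mathscr{S}\cap{}^g\mathscr{S}$. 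As $\mathscr{I}_c$ is the scalar $c$ on $V_{\tau_0}$ and zero on a $({}^g\mathscr{S}\cap\mathscr{S})\ltimes\mathscr{H}_0$-stable complement, it commutes with $({}^g\mathscr{S}\cap\mathscr{S})\ltimes\mathscr{H}_0$, so $\mathscr{I}_c\in{\rm Hom}_{({}^g\mathscr{S}\cap\mathscr{S})\ltimes\mathscr{H}_0}({}^g\hat\tau,\hat\tau)$. The two Hom-spaces then coincide: ${\rm Hom}_{({}^g\mathscr{S}\cap\mathscr{S})\ltimes\mathscr{H}_0}({}^g\hat\tau,\hat\tau)\subseteq{\rm Hom}_{\mathscr{H}_0}({}^g\tau,\tau)$ trivially, and both contain every $\mathscr{I}_c$; since the right-hand space is already one-dimensional, both equal $\{\mathscr{I}_c:c\in\C\}$.

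I expect the genuine obstacle to lie not in any of these steps but in the structural input they rest on: that $W^\star$ is an orthogonal direct sum of nondegenerate symplectic spaces containing $W$, ${}^gW$ and $W_0$ in the stated way, so that $\mathscr{S}\cap{}^g\mathscr{S}$ acts on $V_{\tau_0}$ by the same operators from the $\hat\tau$ side and the ${}^g\hat\tau$ side. This is exactly the ingredient absent from the argument in \cite[\S14]{MR1824988}, and verifying it amounts to checking the quoted assertions of \cite[Lemma 9.3]{MR1824988} (that $W_0$ inherits a single symplectic form from $W$ and from ${}^gW$), \cite[Remark 12.7]{MR1824988} (that $W_1$ is totally isotropic with $W_1^\perp=W_1\oplus W_0$), and \cite[Lemmas 12.8, 13.6]{MR1824988} (that the $W_1$- and ${}^gW_1$-fixed vectors in $V_{\tau^\star}$ realize $\tau$, ${}^g\tau$ and, at their intersection, $\tau_0$, together with the asserted actions of the symplectic groups). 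Once those are in hand, the lemma is formal.
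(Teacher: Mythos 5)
Your argument has a genuine gap, and it sits at precisely the point this lemma was written to repair in Yu's original treatment. You assert that $\hat\tau$ and ${}^g\hat\tau$ ``restrict to the same representation of $({}^g\mathscr{S}\cap\mathscr{S})\ltimes\mathscr{H}_0$ on $V_{\tau_0}$,'' and for the $\mathscr{S}\cap{}^g\mathscr{S}$ part you claim this ``follows from the identifications $W = W_{13}\oplus W_0$ and ${}^gW = W_0\oplus{}^gW_{13}$, which exhibit both restrictions as the Weil representation $\omega_0$ of $\mathscr{S}_0$ extended compatibly to $\mathscr{S}\cap{}^g\mathscr{S}$.'' That is exactly the inference the paper explicitly cautions against immediately before the lemma: agreeing with $\omega_0$ on $\mathscr{S}_0$ tells you nothing on the strictly larger group $\mathscr{S}\cap{}^g\mathscr{S}$, and the decompositions of $W$ and ${}^gW$ are not even stable under $\mathscr{S}\cap{}^g\mathscr{S}$ (an element need only preserve $W$, ${}^gW$, hence $W_0$, not the chosen complements $W_{13}$, ${}^gW_{13}$). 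Indeed the remark following Corollary \ref{Yufix} spells out that the restriction of $\omega$ (or ${}^g\omega$, or $\omega^\star$) to $\mathscr{S}\cap{}^g\mathscr{S}$ acting on $V_{\tau_0}$ differs from the inflation of $\omega_0$ via $\mathscr{S}\cap{}^g\mathscr{S}\to\mathscr{S}_0$ by a character $\chi$ that need not be trivial; this is the very mistake in the proof of Lemma 14.6 of \cite{MR1824988} (``$N_0$ is contained in the commutator subgroup of $P_0$'') that the present section corrects. So ``extended compatibly'' is the thing you needed to prove, not something you may assume.

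The correct and much simpler step --- and the one the paper uses --- is that $\hat\tau$, ${}^g\hat\tau$, and $\omega_0$ never need to be compared through $\omega_0$ at all, because all three are literal restrictions of one fixed representation $\hat\tau^\star$ of $\mathscr{S}^\star\ltimes\mathscr{H}^\star$ on the one fixed space $V_{\tau^\star}$. The group $({}^g\mathscr{S}\cap\mathscr{S})\ltimes\mathscr{H}_0$ lies in both $\mathscr{S}\ltimes\mathscr{H}$ and ${}^g\mathscr{S}\ltimes{}^g\mathscr{H}$ inside $\mathscr{S}^\star\ltimes\mathscr{H}^\star$, and it stabilizes $V_{\tau_0} = V_{\tau^\star}^{W_1\times{}^gW_1}\subset V_\tau\cap V_{{}^g\tau}$; hence its action on $V_{\tau_0}$ via $\hat\tau$ and via ${}^g\hat\tau$ is given by \emph{identical operators} (each is $\hat\tau^\star$ restricted), making $V_{\tau_0}$ a common $({}^g\mathscr{S}\cap\mathscr{S})\ltimes\mathscr{H}_0$-submodule with no compatibility left to check. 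The rest of your argument (well-definedness of $\mathscr{I}_c$ via multiplicity one, stability of the complement) is fine; replace the $\omega_0$-compatibility paragraph with the observation just stated and the proof matches the paper's.
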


\medskip
\begin{corollary}\label{Yufix}
Proposition 14.1 and Theorem 14.2 of \cite{MR1824988} are valid.
\end{corollary}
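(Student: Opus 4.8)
The plan is to go through Yu's proofs of \cite[Proposition 14.1]{MR1824988} and \cite[Theorem 14.2]{MR1824988}, isolate the one step that is not rigorous, and observe that Lemma \ref{pintertwine} supplies exactly the missing ingredient, so that the remainder of Yu's argument survives unchanged. In \cite[\S14]{MR1824988} the irreducibility of the induced representation is obtained by bounding, for each $g\in G$, the dimension of an intertwining space for the inducing representation $\kappa$; this space factors according to the tensor decomposition of $\kappa$ into a depth-zero part, a character part built from $\phi$, and the Heisenberg--Weil parts $\hat\tau$, and the contribution of each Heisenberg--Weil factor is governed by ${\rm Hom}_{\mathscr{H}_0}({}^g\tau,\tau)$ together with the demand that the chosen operator be equivariant for those symplectic automorphisms that are induced by the part of the intertwining group acting on $W$. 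The gap is precisely that \cite{MR1824988} verifies this equivariance only for $\mathscr{S}_0 = {\rm Sp}(W_0)$, whereas the group that actually has to be accommodated is (a quotient of) $\mathscr{S}\cap{}^g\mathscr{S}$, which strictly contains $\mathscr{S}_0$.

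First I would record which assertions are affected: only the claim that the (up to scalar) unique operator in ${\rm Hom}_{\mathscr{H}_0}({}^g\tau,\tau)$ is automatically equivariant for the relevant larger group, and hence that the Heisenberg--Weil contribution to the intertwining dimension equals one whenever it is nonzero. Every other ingredient of \cite[\S14]{MR1824988} --- the reduction to this computation, the identification of the support of the intertwining set with a double coset condition, and the passage from a one-dimensional intertwining space to irreducibility of the induced representation --- uses only the conclusion of this computation, not its flawed justification. I would then invoke Lemma \ref{pintertwine}, whose content is exactly that ${\rm Hom}_{\mathscr{H}_0}({}^g\tau,\tau)$ is one-dimensional and coincides with ${\rm Hom}_{({}^g\mathscr{S}\cap\mathscr{S})\ltimes\mathscr{H}_0}({}^g\hat\tau,\hat\tau)$. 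The key geometric input behind this, established via the orthogonal decomposition $W^\star = W_{13}\oplus W_0\oplus{}^gW_{13}$ and the realization $V_{\tau_0} = V_{\tau^\star}^{W_1\times{}^gW_1}$, is that $\mathscr{S}\cap{}^g\mathscr{S}$ stabilizes the line $V_{\tau_0}$ inside both $V_\tau$ and $V_{{}^g\tau}$; since $\mathscr{S}_0\subset\mathscr{S}\cap{}^g\mathscr{S}$ no intertwining operator is lost, and since the space remains one-dimensional Yu's dimension count is unchanged. Substituting this corrected statement for the erroneous step then yields \cite[Proposition 14.1]{MR1824988} and \cite[Theorem 14.2]{MR1824988} verbatim.

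The main obstacle has, in effect, already been dealt with: it is the verification that $\mathscr{S}\cap{}^g\mathscr{S}$, and not merely $\mathscr{S}_0$, preserves the relevant isotypic line, which is the heart of Lemma \ref{pintertwine}. Granting that lemma, the proof of the corollary is bookkeeping --- one simply checks that the false identity $\mathscr{S}_0 = \mathscr{S}\cap{}^g\mathscr{S}$ is used nowhere else in \cite[\S14]{MR1824988}, so that replacing that single step by Lemma \ref{pintertwine} repairs both results.
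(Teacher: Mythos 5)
Your proposal is correct and follows essentially the same route as the paper: repair the flawed step in Yu's \S14 by substituting Lemma \ref{pintertwine}, which upgrades the one-dimensional intertwining space ${\rm Hom}_{\mathscr{H}_0}({}^g\tau,\tau)$ to ${\rm Hom}_{(\mathscr{S}\cap{}^g\mathscr{S})\ltimes\mathscr{H}_0}({}^g\hat\tau,\hat\tau)$ with no loss of dimension. The paper's proof is just more terse, and makes one mechanical point you left implicit: one must pull the representations back through a special homomorphism and check that the image of ${}^gK\cap K$ in $\mathscr{S}^\star$ does in fact land inside $\mathscr{S}\cap{}^g\mathscr{S}$ before Lemma \ref{pintertwine} applies.
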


\begin{proof}
Our claim follows from Lemma \ref{pintertwine} upon pulling back our representations via a special homomorphism and observing that the image of ${}^gK\cap K$ in $\mathscr{S}^\star$ lies in ${}^g\mathscr{S}\cap \mathscr{S}$.
\end{proof}

\medskip
\begin{remark}
Note that there is a  representation of $\mathscr{S}\cap {}^g\mathscr{S}$ on $V_{\tau_0}$ defined by using the natural projection $\mathscr{S}\cap {}^g\mathscr{S} \to \mathscr{S}_0$ and then composing with $\omega_0$.
Let $\omega_0^\sharp$ denote this representation.
Let $\hat\omega_0$ be the representation of $\mathscr{S}\cap {}^g\mathscr{S}$ on $V_{\tau_0}$ given by restricting $\omega^\star$ (or $\omega$ or ${}^g\omega$).  There must be a character $\chi$ of $\mathscr{S}\cap {}^g\mathscr{S}$ such that $\hat\omega_0 = \omega_0^\sharp \otimes \chi$, but this character $\chi$ need not be trivial.  In the proof of Lemma 14.6 \cite{MR1824988}, it is incorrectly stated that ``$N_0$ is contained in the commutator subgroup of $P_0$.''  This leads to problems in the statements of Lemma 14.6 and Proposition 14.7 in \cite{MR1824988}.  These problems involve the fact that $\chi$ need not be trivial, however, as we have indicated, the nontriviality of $\chi$ is ultimately is irrelevant for the required intertwining results.
\end{remark}

\subsection{The construction of $\kappa$}

Fix a permissible representation $\rho$ of $H_x$.  Theorem \ref{ourmainresult}  states, in part,  that, 
up to isomorphism, there is a unique representation $\kappa = \kappa (\rho)$ of $K$ such that:
\begin{itemize}
\item[{\rm (1)}]  The character of $\kappa$ has support in $H_xK_+$.
\item[{\rm (2)}]  $\kappa |K_+$ is a multiple of $\hat\phi$.
\item[{\rm (3)}]  $\kappa | H_x = \rho \otimes \omega_0\otimes\cdots \otimes \omega_{d-1}$.
\end{itemize}

In this section, we construct such a representation $\kappa$.  Since Conditions (1) -- (3) completely determine the character of $\kappa$, once we have proven the necessary existence result, the uniqueness, up to isomorphism, will follow.

Since $K = H_xJ^1\cdots J^d$, and since $\kappa$ is determined on $H_x$ by Condition (3), it suffices to define $\kappa |J^{i+1}$, for all $i\in \{ 0,\dots , d-1\}$.

Now suppose $i\in \{ 0,\dots , d-1\}$.    Fix a  special homomorphism  $\nu_i :J^{i+1}\to \mathscr{H}_i$, in the sense of Definition \ref{spechom}, and use it to pull back $\tau_i$ to a representation $\tau_{\nu_i}$ of $J^{i+1}$.



 
%
%
%
%
%
%
%

Let
\begin{eqnarray*}
J^{i+1}_\flat &=& G^{i+1}_{\rm der} \cap J^{i+1}\\
J^{i+1}_{\flat ,+} &=& G^{i+1}_{\rm der} \cap J^{i+1}_+.
\end{eqnarray*}
Since $[J^{i+1},J^{i+1}]\subset J^{i+1}_\flat$, the quotient $J^{i+1}/J^{i+1}_\flat$ is a compact abelian group with subgroup $J^{i+1}_+/J^{i+1}_{\flat ,+}$.

Choose a character $\chi_{i}$ of $J^{i+1}$ that occurs as an irreducible component of the induced representation
$${\rm Ind}_{J^{i+1}_+/J^{i+1}_{\flat,+}}^{J^{i+1}/J^{i+1}_\flat} \left(  \zeta_i^{-1}(\hat\phi |J^{i+1}_+)\right).$$
Here, we are using Lemma \ref{Kplusidentity}(5).

Note that Frobenius Reciprocity implies
$$\hat\phi |J^{i+1}_+ =  (\chi_i |J^{i+1}_+)\cdot \zeta_i.$$

Define $\kappa$ on $J^{i+1}$ by
$$\kappa (j) = \chi_{i}(j) (1_\rho\otimes 1_0\otimes \cdots \otimes 1_{i-1}\otimes \tau_{\nu_i} (j)\otimes 1_{i+1}\otimes\cdots\otimes 1_{d-1}),$$
where $1_\ell$ denotes the identity map on $V_\ell$.

\begin{lemma}\label{welldefined}
The definitions of $\kappa |H_{x}$, $\kappa |J^1$, $\dots$, $\kappa |J^d$ are compatible and define a representation $\kappa : K\to \GL (V)$ such that $\kappa |K_+$ is a multiple of $\hat\phi$.
\end{lemma}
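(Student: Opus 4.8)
The plan is to verify that the pieces $\kappa|H_x$ and $\kappa|J^{i+1}$ glue into a single well-defined representation of $K = H_xJ^1\cdots J^d$. First I would record the structural facts I need: by construction $\kappa|H_x = \rho\otimes\omega_0\otimes\cdots\otimes\omega_{d-1}$ acts on $V = V_\rho\otimes V_0\otimes\cdots\otimes V_{d-1}$, while $\kappa|J^{i+1}$ acts on $V$ through $\chi_i$ (a scalar on $V$) times $\tau_{\nu_i}$ on the $i$-th tensor factor $V_i$. Since each $J^{i+1}$ normalizes each $J^{j+1}$ for $i\le j$ (Remark after Lemma \ref{AdlerSpice}) and $H_x$ normalizes all $J^{i+1}$, the product $K = H_xJ^1\cdots J^d$ is a group and every element has the form $h j_1\cdots j_d$ with $h\in H_x$, $j_{i+1}\in J^{i+1}$. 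The first step is to check that $\kappa$ as a function of such a product is well defined: the ambiguity in writing $g\in K$ this way is governed by the intersections $H_x\cap J^{i+1}$ and $J^{i+1}\cap(J^1\cdots J^i)$, and one must check that the formula for $\kappa$ is insensitive to these. On $H_x\cap J^{i+1}$ one sees directly that both descriptions of $\kappa$ produce the same operator because $\tau_{\nu_i}$ is pulled back via the $H_x$-equivariant special homomorphism $\nu_i$; on the overlaps $J^{i+1}\cap(J^1\cdots J^i)$ one uses that $\chi_i$ restricted to such overlaps agrees with the relevant Heisenberg data, which is exactly what Lemma \ref{Kplusidentity}(5) and the Frobenius-reciprocity identity $\hat\phi|J^{i+1}_+ = (\chi_i|J^{i+1}_+)\cdot\zeta_i$ are set up to guarantee.

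Next I would verify the homomorphism property. The key computation is that $\kappa(j)\kappa(j') = \kappa(jj')$ when $j\in J^{i+1}$ and $j'\in J^{j+1}$ with $i\le j$, and that $\kappa(h)\kappa(j)\kappa(h)^{-1} = \kappa(hjh^{-1})$ for $h\in H_x$, $j\in J^{i+1}$. The latter is precisely Lemma \ref{taunuiidentity} combined with the fact that $\chi_i$, being a character of $J^{i+1}$ pulled up to a scalar on $V$, is compatible with conjugation by $H_x$ through the action ${\rm Int}(h)$ on $J^{i+1}$. For products of $J$-factors, the commutator $[J^{i+1},J^{j+1}]$ lands (by Lemma \ref{AdlerSpice}) in a deeper group on which the characters $\chi_i,\chi_j$ and the Heisenberg representations behave predictably; the cross terms on distinct tensor factors $V_i$ and $V_j$ commute automatically, so only the scalar cocycle contributions need to be tracked, and these vanish by the same genericity and depth considerations. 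The genuinely delicate point — and the step I expect to be the main obstacle — is that a single $J^{i+1}$ need not be abelian and the restriction $\kappa|J^{i+1} = \chi_i\otimes\tau_{\nu_i}$ must itself be an honest representation: this is where $\tau_{\nu_i} = \tau_i\circ\nu_i$ being a genuine (not projective) representation is used, together with the fact that $\chi_i$ was chosen so that $\chi_i|J^{i+1}_{\flat,+}$ matches $\zeta_i^{-1}(\hat\phi|J^{i+1}_+)$ on the right subgroup, making the twist $\chi_i\cdot\tau_{\nu_i}$ consistent.

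Finally I would verify the last assertion, that $\kappa|K_+$ is a multiple of $\hat\phi$. Using Lemma \ref{Kplusidentity}(2), $K_+ = H_{x,0+}J_+ = H_{x,0+}J^1_+\cdots J^d_+$, so it suffices to check the claim on each factor. On $H_{x,0+}$ the representation $\rho\otimes\omega_0\otimes\cdots\otimes\omega_{d-1}$ restricts to a multiple of $\phi = \hat\phi|H_{x,0+}$, since $\rho|H_{x,0+}$ is $\phi$-isotypic (permissibility, condition (2)) and each $\omega_i$ is trivial on $H_{x,0+}$ because $H_{x,0+}$ acts trivially on the relevant symplectic space $W_i$. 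On each $J^{i+1}_+$, the formula for $\kappa$ gives $\chi_i\cdot\tau_{\nu_i}$; but $\tau_{\nu_i}$ restricted to $J^{i+1}_+$ is a multiple of the central character $\zeta_i$ (by the defining property of the Heisenberg representation $\tau_i$ and the special homomorphism $\nu_i$), so $\kappa|J^{i+1}_+$ is a multiple of $(\chi_i|J^{i+1}_+)\cdot\zeta_i = \hat\phi|J^{i+1}_+$ by the Frobenius-reciprocity identity noted above. Assembling these across the factors of $K_+$, and using that $\hat\phi$ is a character (Lemma \ref{Kplusidentity}(4)) so that these isotypic conditions multiply consistently, yields that $\kappa|K_+$ is a multiple of $\hat\phi$.
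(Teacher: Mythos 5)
Your proposal follows the same general architecture as the paper's proof---decompose $K$ as $H_xJ^1\cdots J^d$, check compatibility on intersections, verify the homomorphism property via Lemma~\ref{taunuiidentity}, then check the $K_+$-restriction factor by factor using Lemma~\ref{Kplusidentity}---and the final step (that $\kappa|K_+$ is $\hat\phi$-isotypic) is handled essentially as the paper does. But you are imprecise exactly at the point where the argument is nontrivial, and so the proposal does not actually close.

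The paper reduces the homomorphism property to the single identity $\kappa(k'^{-1}jk') = \kappa(k')^{-1}\kappa(j)\kappa(k')$ for $k'\in K^i = H_xJ^1\cdots J^i$ and $j\in J^{i+1}$, and after Lemma~\ref{taunuiidentity} disposes of the Heisenberg tensor factor this reduces further to $\chi_i(k'^{-1}jk') = \chi_i(j)$, i.e.\ $\chi_i|[K^i,J^{i+1}] = 1$. This holds for two concrete reasons, neither of which is ``genericity'': first, $\chi_i$ is by construction an irreducible constituent of a representation that is inflated from $J^{i+1}/J^{i+1}_\flat$, so $\chi_i$ is \emph{trivial on} $J^{i+1}_\flat = G^{i+1}_{\rm der}\cap J^{i+1}$; second, $[K^i,J^{i+1}]\subset J^{i+1}_\flat$, because both $K^i$ and $J^{i+1}$ sit inside $G^{i+1}$ (so commutators land in $G^{i+1}_{\rm der}$) and the commutator estimates of Lemma~\ref{AdlerSpice} put them inside $J^{i+1}$. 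Your proposal asserts that the ``scalar cocycle contributions \dots vanish by the same genericity and depth considerations,'' and instead flags as ``the genuinely delicate point'' the fact that $\tau_{\nu_i}$ is a genuine and not a projective representation---which is immediate from $\nu_i$ being an honest group homomorphism and is not where the difficulty lies. Without explicitly identifying $\chi_i|J^{i+1}_\flat = 1$ and $[K^i,J^{i+1}]\subset J^{i+1}_\flat$, the compatibility-across-factors step is not proven.

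A secondary, presentational difference: the paper does a single induction on $i$ for well-definedness and the homomorphism property together, computing $K^i\cap J^{i+1} = G^i_{x,r_i}$ once and checking there that both restrictions are multiples of $\hat\phi$, whereas you split into $H_x\cap J^{i+1}$ and $J^{j+1}\cap J^{i+1}$ and justify the first via the $H_x$-equivariance of $\nu_i$; that equivariance is what makes Lemma~\ref{taunuiidentity} true but is not what makes the two definitions of $\kappa$ agree on the intersection---for that you need exactly the $\hat\phi$-isotypy argument the paper gives. This part is fixable; the gap in the previous paragraph is the substantive one.
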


\begin{proof}
There are three things to prove:
\begin{itemize}
\item[(i)] The given definitions of $\kappa$ on  $H_x$, $J^1$, $\dots$, $J^d$ are compatible and hence yield a well-defined mapping $\kappa : K\to \GL (V)$.
\item[(ii)] The  mapping $\kappa$ is a homomorphism.
\item[(iii)] $\kappa |K_+$ is a multiple of $\hat\phi$.
\end{itemize}

It is convenient to start by showing that the definitions of $\kappa$ on  $H_x$, $J^1$, $\dots$, $J^d$ are compatible with (iii).

First, we show that the restriction of $\kappa |H_x$ to $H_x\cap K_+ =H_{x,0+}$ is a multiple of $\hat\phi |H_{x,0+} = \phi$.
By definition, $$\kappa | H_x = \rho \otimes \omega_0\otimes\cdots \otimes \omega_{d-1}$$ and $\rho | H_{x,0+}$ is a multiple of $\phi$.
So it suffices to show that each factor $\omega_i$ when restricted to $H_{x,0+}$ is a multiple of the trivial representation.  
The latter statement follows directly from the fact that $H_{x,0+}$ acts trivially by conjugation on $W_i = J^{i+1}/J^{i+1}_+$.  

(In fact, Lemma 4.2 \cite{MR1824988} implies that $[H_{x,0+},J^{i+1}]\subset J^{i+1}_{++}\subset J^{i+1}_+$ which, in turn, implies that $H_{x,0+}$ acts trivially by conjugation on $W_i = J^{i+1}/J^{i+1}_+$.  See also Remark \ref{AdlerSpice} below for more general information about commutators.)

Now fix $i\in \{ 0,\dots , d-1\}$.  The restriction of $\kappa |J^{i+1}$ to $J^{i+1}\cap K_+ =J^{i+1}_+$ is a multiple of $(\chi_i |J^{i+1}_+)\cdot \zeta_i$.   
But since $\hat\phi |J^{i+1}_+ =  (\chi_i |J^{i+1}_+)\cdot \zeta_i$, we see that the restriction of $\kappa |J^{i+1}$ to $J^{i+1}_+$ is compatible with (iii).

Next, we observe that
$$K = H_{x,0+}J^1_+\cdots J^d_+ = (H_x\cap K_+)(J^1\cap K_+)\cdots (J^d\cap K_+).$$  Therefore, if (i) and (ii) hold then (iii) must also hold, according to the previoous discussion about compatibility with (iii) of the various restrictions of $\kappa$.

It remains to prove (i) and (ii).  We prove both things using a single induction.

Suppose we are given $i\in \{ 0,\dots , d-1\}$ such that the definitions of $\kappa$ on $H_{x},J^1,\cdots , J^i$ are compatible and yield a representation $\kappa |K^i$  of $K^i = H_{x}J^1\cdots J^i$.  (When $i=0$, this amounts to the trivial assumption that $\kappa |H_{x}$ is well defined.)

To show that we have a well-defined representation $\kappa : K\to \GL (V)$, it suffices to show that the definitions of $\kappa|K^i$ and $\kappa |J^{i+1}$ are compatible, and that the resulting function $\kappa |K^{i+1}$ is a homomorphism $K^{i+1}\to \GL (V)$.

Compatibility is equivalent to the condition that $\kappa |K^i$ and $\kappa|J^{i+1}$ agree on $K^i\cap J^{i+1} = G^i_{x,r_i}$.
Suppose $i\ge 1$.  Then since $G^i_{x,r_i} = J^i\cap J^{i+1}$, compatibility reduces to the condition that $\kappa |J^i$ and $\kappa |J^{i+1}$ agree on $G^i_{x,r_i}$.
The latter condition indeed holds since both $\kappa |J^i$ and $\kappa |J^{i+1}$ restrict to a multiple of $\hat\phi |G^i_{x,r_i}$.  A  similar argument may be used when $i=0$.

It follows that we have a well-defined mapping $$\kappa |K^{i+1}: K^{i+1}\to \GL(V).$$ We now verify that this mapping is a homomorphism.

Suppose $k,k'\in K^i$ and $j,j'\in J^{i+1}$.  It is easy to see that the condition
$$\kappa (kj)\kappa (k'j') = \kappa (kjk'j')$$ is equivalent to the condition
$$\kappa (k'^{-1}jk') = \kappa (k')^{-1}\kappa (j)\kappa (k').$$
Now write $$k' = hj_1\cdots j_i,$$ with $h\in H_{x}$, $j_1\in J^1$, $\dots$, $j_i\in J^i$ and substitute for $\kappa (k')$ the product
$\kappa (h)\kappa(j_1)\cdots \kappa (j_i)$.  Then we obtain the condition
$$\chi_i (k'^{-1}jk') \, \tau_{\nu_i}(h^{-1}jh) = \chi_i (j)\, \tau_{\nu_i}(j).$$
In light of Lemma \ref{taunuiidentity}, this reduces to
$$\chi_i (k'^{-1}jk')   = \chi_i (j)$$ or, equivalently,
$$\chi_i |[K^i,J^{i+1}]=1.$$
But, by construction, $\chi_i$ is trivial on $J^{i+1}_\flat = G^{i+1}_{\rm der}\cap J^{i+1}$.
Thus our claim  follows from the fact that $[K^i,J^{i+1}]\subset J^{i+1}_\flat$.
\end{proof}

\begin{lemma}\label{kappainvariance}
The character of $\kappa$ has support in $H_xK_+$.
Consequently, up to isomorphism, the  representation $\kappa$ is independent of the choices of the $\nu_i$'s and $\chi_i$'s.  
\end{lemma}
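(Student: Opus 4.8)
The plan is to prove the character-support statement first and then read off choice-independence. For the latter: $\rho$ is finite-dimensional, being an irreducible smooth representation of a group compact modulo its center, and the $\tau_{\nu_i}$ and $\omega_i$ are finite-dimensional, so $\kappa$ is finite-dimensional and its isomorphism class is determined by its character $\Theta_\kappa$. By Lemma~\ref{welldefined} the restriction $\kappa|K_+$ is a multiple of the one-dimensional character $\hat\phi$, so $\kappa(k)=\hat\phi(k)\,1_V$ for $k\in K_+$, while $\kappa|H_x=\rho\otimes\omega_0\otimes\cdots\otimes\omega_{d-1}$ by construction. Granting that $\Theta_\kappa$ vanishes off $H_xK_+$, these facts force
$$\Theta_\kappa(hk)=\hat\phi(k)\,\operatorname{tr}(\rho(h))\prod_{i=0}^{d-1}\operatorname{tr}(\omega_i(h))\qquad(h\in H_x,\ k\in K_+),$$
and $\Theta_\kappa\equiv0$ elsewhere; consistency on the overlap $H_x\cap K_+=H_{x,0+}$ follows from $\kappa|H_{x,0+}=\phi\,1_V$ together with Lemma~\ref{Kplusidentity}(3). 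Since the right-hand side involves none of the $\nu_i$ or $\chi_i$, the character $\Theta_\kappa$, and hence $\kappa$, is independent of these choices.

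For the support statement I would compute $\Theta_\kappa$ on $K=H_xJ^1\cdots J^d$ directly. Writing $g=hj_1\cdots j_d$ with $h\in H_x$ and $j_{i+1}\in J^{i+1}$, the operators $\tau_{\nu_i}(j_{i+1})$ act on pairwise distinct tensor factors of $V=V_\rho\otimes V_0\otimes\cdots\otimes V_{d-1}$, and since $\omega_i(h)\tau_{\nu_i}(j_{i+1})=\hat\tau_i({\rm Int}(h),1)\,\hat\tau_i(1,\nu_i(j_{i+1}))=\hat\tau_i({\rm Int}(h),\nu_i(j_{i+1}))$ one obtains
$$\Theta_\kappa(g)=\Big(\prod_{i=0}^{d-1}\chi_i(j_{i+1})\Big)\operatorname{tr}(\rho(h))\prod_{i=0}^{d-1}\operatorname{tr}\!\big(\hat\tau_i({\rm Int}(h),\nu_i(j_{i+1}))\big).$$
The key input is a vanishing property of finite Heisenberg--Weil characters: for $s\in\mathscr{S}_i$ and $(w,z)\in\mathscr{H}_i$ one has $\operatorname{tr}(\hat\tau_i(s,(w,z)))=0$ unless $w\in(1-s)W_i$. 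This I would prove in a few lines: conjugating $(s,(w,z))$ in $\mathscr{S}_i\ltimes\mathscr{H}_i$ by $(1,(u,1))$ with $u$ in the fixed subspace $W_i^{\,s}$ leaves the $W_i$-component of the $\mathscr{H}_i$-part unchanged and multiplies its $\mu_p$-component by $\langle w,u\rangle$; as the central character of $\tau_i$ is the identity on $\mu_p$, the trace gets multiplied by $\langle w,u\rangle$, and this forces the trace to vanish whenever $w\not\perp W_i^{\,s}$, i.e.\ (since $s^{*}=s^{-1}$ in $\mathscr{S}_i$, so $(W_i^{\,s})^{\perp}=(1-s)W_i$) whenever $w\notin(1-s)W_i$.

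Applying this with $s={\rm Int}(h)$ acting on $W_i=J^{i+1}/J^{i+1}_+$, and noting that $(1-{\rm Int}(h))W_i$ is exactly the set of commutators $\{[\bar v,h]:\bar v\in W_i\}$, I conclude: if $\Theta_\kappa(g)\neq0$ then $\operatorname{tr}(\rho(h))\neq0$ and, for each $i$, the image of $j_{i+1}$ in $W_i$ is a commutator $[\bar v_{i+1},h]$, so $j_{i+1}\in[v_{i+1},h]\,J^{i+1}_+$ for some $v_{i+1}\in J^{i+1}$. The remaining step is to turn these constraints into $g\in H_xK_+=H_xJ_+$. I would argue by induction on $d$, descending through the factors $J^d,\dots,J^1$: when the deepest factor $j_d$ already lies in $J^d_+$ one has $\kappa(j_d)=\hat\phi(j_d)\,1_V$ by Lemma~\ref{Kplusidentity}(5), so $\Theta_\kappa(g)=\hat\phi(j_d)\,\Theta_\kappa(hj_1\cdots j_{d-1})$ with $hj_1\cdots j_{d-1}\in K^{d-1}=H_xJ^1\cdots J^{d-1}$, and the inductive hypothesis, applied to the construction for the shorter twisted Levi sequence, places $hj_1\cdots j_{d-1}$ — hence $g$ — in $H_xK_+$; when $j_d\notin J^d_+$ one uses that $\Theta_\kappa$ is a class function and conjugates $g$ by a lift $v_d\in J^d$ of the $\bar v_d$ with $j_dJ^d_+=[\bar v_d,h]J^d_+$, which moves the deepest factor into $J^d_+$ and perturbs the lower factors only inside groups that — by the commutator estimates of Lemma~\ref{AdlerSpice} and the factorizations of Lemma~\ref{Kplusidentity} — are absorbed into $H_xJ_+$, returning us to the previous case. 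This last bookkeeping inside the twisted-Levi filtration is the real obstacle; it is the analogue of the character-support arguments of \cite[\S15]{MR1824988} and \cite[\S4]{MR2431732}, which one could also invoke directly.
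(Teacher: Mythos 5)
Your proposal follows the same skeleton as the paper's proof: reduce the choice-independence to the support statement, write the character as the product $\bigl(\prod_i\chi_i(j_{i+1})\bigr)\operatorname{tr}\rho(h)\prod_i\operatorname{tr}\bigl(\omega_i(h)\,\tau_{\nu_i}(j_{i+1})\bigr)$, and then invoke a support property of finite Heisenberg--Weil characters. The paper dispatches this last step by citing Howe and G\'erardin; you instead supply a self-contained proof of the key fact $\operatorname{tr}\hat\tau_i(s,(w,z))=0$ unless $w\in(1-s)W_i$, by conjugating by $(1,(u,1))$ with $u\in W_i^s$ and using $(W_i^s)^\perp=(1-s)W_i$. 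That argument is correct, and it is a pleasant and elementary replacement for the citation; it makes transparent exactly which feature of the Heisenberg--Weil extension is used.

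The real divergence comes in the final inference, which you rightly flag as ``the real obstacle.'' You attempt an induction in which, when $j_d\notin J^d_+$, you conjugate $g$ by a lift $v_d\in J^d$ to move the top factor into $J^d_+$. Notice, however, that conjugation by $v_d$ does not preserve $H_xK_+$: what the reduction actually yields, if carried through, is that $g$ is $K$-conjugate to an element of $H_xK_+$, not that $g$ itself lies in $H_xK_+$. This gap is not cosmetic --- the Heisenberg--Weil character is genuinely nonzero at pairs $(s,(w,z))$ with $0\neq w\in(1-s)W_i$, so whenever $\operatorname{tr}\rho(h)\neq 0$ and $\operatorname{Int}(h)$ acts nontrivially on some $W_i$, the product formula is nonzero at points $hj_1\cdots j_d$ with $\bar j_{i+1}\neq 0$. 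Such points do not belong to $H_xK_+$, but they are $K$-conjugate to its elements (your $d=1$ computation is exactly the rewriting $hj_1=u(h\,u^{-1}j'_1u)u^{-1}$ with $u=hv_1h^{-1}\in J^1$, $j'_1\in J^1_+$). So the version of the statement your sketch can deliver --- support contained in the $K$-invariant set generated by $H_xK_+$ --- is the one that the Heisenberg--Weil vanishing actually supports, and it is also all that is needed to determine $\chi_\kappa$ as a class function and hence to conclude choice-independence; but as written you should be explicit that the induction closes only up to $K$-conjugacy.

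In short: your approach matches the paper's, your substitute proof of the Heisenberg--Weil support lemma is a genuine and correct contribution, and the one step you identify as incomplete is precisely the step the paper hides inside its citation. When you finish the bookkeeping, state and prove the conjugacy-invariant form of the support claim rather than the literal one.
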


\begin{proof}
The second assertion follows from the first one, 
since conditions (2) and (3) in Theorem \ref{ourmainresult} are independent of the choices of the $\nu_i$'s and $\chi_i$'s.

So  it suffices to prove that the character $\chi_\kappa$ of $\kappa$ has support in $H_{x}K_+$.

By definition,
$$\kappa (hj_1\cdots j_d) = \left(\prod_{i=0}^{d-1} \chi_{i}(j_{i+1})\right) \rho(h)\otimes \left( \bigotimes_{i=0}^{d-1} \omega_i(h)\, \tau_{\nu_i}(j_{i+1})\right),$$
where $h\in H_{x}$, $j_1\in J^1,\dots , j_d\in J^d$.
Taking traces of the latter finite-dimensional operators gives
$$\chi_\kappa (hj_1\cdots j_d) = \left(\prod_{i=0}^{d-1} \chi_{i}(j_{i+1})\right) \tr\rho(h)\otimes \left( \bigotimes_{i=0}^{d-1} \tr \left(\omega_i(h)\, \tau_{\nu_i}(j_{i+1})\right)\right).$$
The desired fact about the support of $\chi_\kappa$ now follows from Howe's computation of the support of the finite Heisenberg/Weil representations. (See \cite[Proposition 2]{MR0316633} and \cite[Theorem 4.4]{MR0460477}.)
\end{proof}

\bigskip
\begin{remark}
The previous proof is similar to the proof of Proposition 4.24 in \cite{MR2431732}.
\end{remark}

\bigskip
\begin{remark}
Recall that $\kappa |J^{i+1}$ is the product of a (noncanonical) character $\chi_i$ of $J^{i+1}$ with the pullback $\tau_{\nu_i}$ of a Heisenberg representation $\tau_i$ via a special homomorphism $\nu_i : J^{i+1}\to \mathscr{H}_i$.  The character $\chi_i$ is chosen in such a way that $\kappa |J^{i+1}_+$ is a multiple of $\hat\phi |J^{i+1}_+$.  
  A consequence of Lemma \ref{kappainvariance} is that the equivalence classes of the representations $\kappa |J^{i+1}$ are invariants that are canonically associated to $\rho$.   {\it A priori}, knowing the equivalence classes of $\kappa|H_x$ and the $\kappa |J^{i+1}$'s is not enough to determine the equivalence class of $\kappa$.\end{remark}

 \subsection{Irreducibility of $\pi$}
  
 We begin by recapitulating a basic fact from \cite[\S2.1]{MR2431732}.

 Suppose $\mathscr{G}$ is a totally disconnected group with center $\mathscr{Z}$, and suppose $\mathscr{K}$ is an open subgroup of $\mathscr{G}$ such that $\mathscr{K}$ contains $\mathscr{Z}$ and the quotient $\mathscr{K}/\mathscr{Z}$ is compact.
 
 Assume $(\mu,V_\mu)$ is an irreducible, smooth, complex representation of $\mathscr{K}$.  When we write ${\rm ind}_{\mathscr{K}}^{\mathscr{G}}(\mu)$, we are referring to the space of functions $$f:\mathscr{G}\to V_\mu$$ such that:
\begin{itemize}
\item $f(kg) = \mu (k)\, f(g)$, for all $k\in \mathscr{K}$, $g\in\mathscr{G}$,
\item $f$ has compact support modulo $\mathscr{Z}$ or, equivalently, the support of $f$ has finite image in $\mathscr{K}\bs \mathscr{G}$,
\item $f$ is fixed by right translations by some open subgroup $\mathscr{K}_f$ of $\mathscr{G}$.
 \end{itemize}
 We view ${\rm ind}_{\mathscr{K}}^{\mathscr{G}}(\mu)$ as a $\mathscr{G}$-module, where $\mathscr{G}$ acts on functions by right translations.
 
 A well known and fundamental fact, due to Mackey, is that the representation ${\rm ind}_{\mathscr{K}}^{\mathscr{G}}(\mu)$
 is irreducible precisely when 
$I_g(\mu)\ne 0$ implies $g\in \mathscr{K}$, where
$$I_g(\mu) = {\rm Hom}_{g \mathscr{K}g^{-1}\cap \mathscr{K}}({}^g\mu,\mu),$$
for $g\in \mathscr{G}$.
We use this  repeatedly in this section.

Now suppose $\kappa$ is a representation of $K$ as in the statement of Theorem \ref{ourmainresult} and let $\pi = {\rm ind}_K^G(\kappa)$.

\begin{lemma}\label{irred}
The representation $\pi$ is irreducible.
\end{lemma}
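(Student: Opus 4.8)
The plan is to apply Mackey's irreducibility criterion recalled just above: it suffices to show that if $g \in G$ satisfies $I_g(\kappa) \ne 0$ then $g \in K$. The strategy follows the classical Yu-type argument, adapted to the present framework, and proceeds by peeling off contributions layer by layer from the highest depth $r_{d-1}$ down to depth zero, then handling the depth-zero part via the permissibility hypothesis that $\rho$ induces irreducibly from $H_x$ to $H$.

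First I would reduce the support of potential intertwining elements. Since the character of $\kappa$ has support in $H_x K_+$ (Lemma \ref{kappainvariance}), and $\kappa|K_+$ is a multiple of $\hat\phi$ with $\kappa|H_x = \rho\otimes\omega_0\otimes\cdots\otimes\omega_{d-1}$, any $g$ intertwining $\kappa$ must in particular intertwine the restrictions to the various subgroups $J^{i+1}$ and to $H_x$. The key input here is the intertwining theory for Heisenberg $p$-groups developed in Lemma \ref{pintertwine} and Corollary \ref{Yufix}: for each $i$, the generic character $\zeta_i$ of $G^i_{x,r_i}$ (inflated to $J^{i+1}_+$) forces, via the genericity established in Lemma \ref{GEonelemma} (condition \textbf{GE1}, and \textbf{GE2} from condition (4) of Definition \ref{permissibledef}), that an element $g$ intertwining the $\tau_{\nu_i}$-component must lie in $G^i_{y,\ldots}$ type subgroups — more precisely, genericity of $\zeta_i$ together with Yu's Proposition 14.1/Theorem 14.2 (now valid by Corollary \ref{Yufix}) constrains $g$ to normalize the relevant filtration subgroups and to lie in $G^i$ modulo $J$-type groups. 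Iterating this from $i = d-1$ down to $i = 0$, one concludes that $g \in G^0 \cdot J = HJ$, in fact $g \in H_x \cdot (\text{something in } J)$ after absorbing the $J$-part into $K$.

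Next, having reduced to $g \in H$ (modulo $K$), the intertwining of $\kappa$ restricts to intertwining of $\rho$ as a representation of $H_x$: using that $\omega_i$ depends only on the conjugation action on $J^{i+1}$ and that the relevant Heisenberg data are already accounted for, one shows $I_g(\kappa) \ne 0$ with $g \in H$ implies $I_g(\rho) \ne 0$ in $H$. But condition (1) of Definition \ref{permissibledef} says $\mathrm{ind}_{H_x}^H(\rho)$ is irreducible, so by Mackey's criterion applied inside $H$ this forces $g \in H_x \subset K$. Combining the two stages gives $g \in K$, which is exactly what Mackey's criterion requires for irreducibility of $\pi = \mathrm{ind}_K^G(\kappa)$.

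The main obstacle I expect is the bookkeeping in the inductive descent through the layers $i = d-1, \ldots, 0$: one must carefully track which filtration subgroups an intertwining element normalizes, correctly invoke the corrected Heisenberg intertwining statements (Lemma \ref{pintertwine}, Corollary \ref{Yufix}) at each stage, and verify that the Weil-representation factors $\omega_i$ do not introduce extra intertwining beyond what $\zeta_i$ already controls — this is precisely the subtle point that Yu's original argument got slightly wrong and that the remark after Corollary \ref{Yufix} addresses. The genericity of the dual cosets (Lemma \ref{GEonelemma}) is what makes each step of the descent go through, so the argument relies essentially on all four conditions of permissibility: conditions (2)--(4) feed the Heisenberg-layer analysis, and condition (1) closes the depth-zero step.
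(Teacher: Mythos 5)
Your proposal is correct and takes essentially the same approach as the paper: Mackey's criterion, a recursive descent through the levels $i = d-1, \dots, 0$ driven by genericity of the dual cosets, and closing at depth zero via condition (1) of permissibility. The one misattribution worth flagging is in the descent step: passing from $g_{i+1} \in G^{i+1}$ with $I_{g_{i+1}}(\kappa) \ne 0$ to some $g_i \in G^i \cap J^{i+1} g_{i+1} J^{i+1}$ with $I_{g_i}(\kappa) \ne 0$ uses Yu's Theorem 9.4 (condition ${\bf SC1}_i$), whose proof was never in question, together with the fact that $\zeta_i$ and $\hat\phi$ agree on $G^{i+1}_{\rm der} \cap J^{i+1}_+$ (Lemma \ref{Kplusidentity}(5)); the repaired Proposition 14.1/Theorem 14.2 (Corollary \ref{Yufix}) enter only afterward, in the multiplicity-one factorization of the intertwining form $\lambda$ into $\lambda_\rho \otimes \prod_i \lambda_i$ once one has reduced to $h \in H$, which is what ultimately hands a nonzero element of $I_h(\rho)$ to the Mackey argument inside $H$.
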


\begin{proof}
We may as well assume $d>0$, since there is nothing to prove if $d=0$.

Assume $g_d\in G$ and $I_{g_d}(\kappa)\ne 0$.
It suffices  to show $g_d\in K$.

We start the proof with a recursive procedure.

Assume that $i\in \{ 0,\dots , d-1\}$ and  we are given $g_{i+1}\in G^{i+1}$ such that $I_{g_{i+1}}(\kappa )\ne 0$.  We will show that there exists
$$g_i\in G^i \cap J^{i+1}g_{i+1}J^{i+1}.$$
For such $g_i$, we also show that it is necessarily the case that $I_{g_i}(\kappa)\ne 0$.

Since $\kappa |K_+$ is a multiple of $\hat\phi$, the assumption $I_{g_{i+1}}(\kappa)\ne 0$ implies that  $I_{g_{i+1}}(\hat\phi |K^{i+1}_+)\ne 0$
or, equivalently,
$$\hat\phi |([g_{i+1}^{-1},K^{i+1}_+]\cap K^{i+1}_+)=1.$$
Here, $K^{i+1}_+$ is the group
\begin{eqnarray*}
K^{i+1}_+
&=&H_{x,0+}J^1_+\cdots J^{i+1}_+\\
&=&H_{x,0+}L^1_+\cdots L^{i+1}_+\\
&=&(G^0,\dots, G^{i+1})_{x,(0+,s_0+,\dots ,s_i+)}.
\end{eqnarray*}

We have 
$$\zeta_{i} |([g_{i+1}^{-1},J^{i+1}_+]\cap J^{i+1}_+)= \hat\phi|([g_{i+1}^{-1},J^{i+1}_+]\cap J^{i+1}_+)=1,$$
since, according to Lemma \ref{Kplusidentity}(5), the characters $\zeta_i$ and $\hat\phi$ agree on $G^{i+1}_{{\rm der}}\cap J^{i+1}_+$.

According to  \cite[Theorem 9.4]{MR1824988}, there exist elements $g_i\in G^i$ and $j_{i+1},j'_{i+1}\in J^{i+1}$ such that $$g_i = j_{i+1}g_{i+1}j'_{i+1}.$$
Then $$\Lambda\mapsto \kappa (j_{i+1})\circ \Lambda\circ\kappa (j'_{i+1})$$ determines a bijection
$$I_{g_{i+1}}(\kappa)\cong I_{g_i}(\kappa).$$
Consequently, $I_{g_i}(\kappa)\ne 0$.

Thus we may continue the recursion until we have produced a sequence $g_d,\dots , g_0$, where $g_i\in G^i$ and $I_{g_i}(\kappa)\ne 0$.

In particular, we obtain $g_0\in H$ such that $$g_d \in J^d\cdots J^1 g_0 J^1\cdots J^d\subset Kg_0 K.$$  So to show $g_d$ lies in $K$, it suffices to show $g_0$ lies in $H_x = K\cap H$.

Hence, it suffices to show  that when $h\in H$ and $I_{h}(\kappa )\ne 0$  then it must be the case that $h\in H_{x}$.

So suppose $h\in H$ and $I_h (\kappa )\ne 0$.  The intertwining space $I_h (\kappa ) $
 consists of linear endomorphisms $\Lambda$ of the space $V$ of $\kappa$ such that
$$\Lambda ({}^h \kappa (k)v) =\kappa (k)\, \Lambda(v),\quad \forall k\in hK h^{-1}\cap K .$$

Given such $\Lambda\in I_h (\kappa )$, there is an associated
$$\lambda\in {\rm Hom}_{hKh^{-1}\cap K }({}^h \kappa\otimes \tilde\kappa ,1),$$ where $hKh^{-1}\cap K$ is embedded diagonally in $K \times K$ and $(\tilde\kappa, \widetilde V)$ is the contragredient of $(\kappa,V)$.  It is defined on elementary tensors in $V \otimes \widetilde V$ by
$$\lambda (v\otimes \tilde v) = \langle \Lambda (v),\tilde v\rangle.$$
The map $\Lambda\mapsto \lambda$ gives a linear isomorphism
$$I_h (\kappa) \cong {\rm Hom}_{hKh^{-1}\cap K }({}^h\kappa\otimes \tilde\kappa ,1).$$
The latter remarks apply very generally to intertwining, not just to $\kappa$, and we will use them for various representations.

We observe that for each $i\in \{ 0,\dots , d-1\}$ the intertwining space  $I_h (\tau_{\nu_i})$ has dimension one, according to   \cite[Proposition 12.3]{MR1824988}.  For each such $i$, fix a nonzero element $\Lambda_i\in I_h (\tau_{\nu_i})$ and let 
$\lambda_i$ be the corresponding element of ${\rm Hom}_{hJ^{i+1}h^{-1}\cap J^{i+1} }({}^h \tau_{\nu_i}\otimes \tilde\tau_{\nu_i} ,1)$.

Suppose
$$v= v_\rho\otimes v_0\otimes \cdots \otimes v_{d-1}\in V_\rho\otimes V_0\otimes \cdots \otimes V_{d-1} = V$$
and
$$\tilde v=   \tilde v_\rho \otimes \tilde v_0\otimes \cdots \otimes \tilde v_{d-1}\in \widetilde V_\rho\otimes \widetilde V_0\otimes \cdots \otimes \widetilde V_{d-1} = \widetilde V.$$

Suppose $i\in \{ 0,\dots ,d-1\}$.
Fix all of the components of $v$ and $\tilde v$ except for $v_i$ and $\tilde v_i$ and define a linear form $\lambda'_i :V_i\otimes \widetilde V_i\to \C$ by
$$\lambda'_i (v_i\otimes \tilde v_i) = \lambda (v\otimes \tilde v).$$
Then for $j\in hJ^{i+1}h^{-1}\cap J^{i+1}$ we have
\begin{eqnarray*}
\lambda'_i (v_i\otimes \tilde v_i)
&=&
\lambda  (\kappa (h^{-1}jh)v\otimes \tilde\kappa (j)\tilde v)\\
&=&
\chi_i (h^{-1}jhj^{-1}) \lambda'_i (\tau_{\nu_i} (h^{-1}jh)v_i\otimes \tilde\tau_{\nu_i} (j)\tilde v_i)\\
&=& \lambda'_i (\tau_{\nu_i} (h^{-1}jh)v_i\otimes \tilde\tau_{\nu_i} (j)\tilde v_i),
\end{eqnarray*}
where the triviality of $\chi_i (h^{-1}jhj^{-1})$ follows from the fact that $\chi_i$ is trivial on $J^{i+1}_\flat = G^{i+1}_{\rm der}\cap J^{i+1}$.

Therefore, $\lambda'_i$ lies in $${\rm Hom}_{hJ^{i+1}h^{-1}\cap J^{i+1} }({}^h \tau_{\nu_i}\otimes \tilde\tau_{\nu_i} ,1) = \C \lambda_i.$$

Using a ``multiplicity one'' argument  as in the proof of Lemma 5.24 \cite{MR2431732}, one can show that 
for every linear form $$\lambda\in {\rm Hom}_{hKh^{-1}\cap K }({}^h\kappa\otimes \tilde\kappa ,1)$$ there exists a unique linear form
$$\lambda_\rho\in {\rm Hom}(V_\rho\otimes \widetilde V_\rho ,\C)$$ such that
$$\lambda (v\otimes \tilde v) = \lambda_\rho (v_\rho \otimes \tilde v_{\rho})\cdot\prod_{i=0}^{d-1} \lambda_i (v_i\otimes \tilde v_i).$$

Roughly speaking, the argument goes as follows.  Fix $v_{d-1}$ and $\tilde v_{d-1}$ so that $\lambda_{d-1}(v_{d-1}\otimes \tilde v_{d-1})$ is nonzero.
Define 
$$\partial v = v_\rho\otimes v_0\otimes \cdots \otimes v_{d-2}$$ in $$\partial V = V_\rho\otimes V_0\otimes \cdots \otimes V_{d-2}$$
and
$$\partial\tilde v =\tilde v_\rho \otimes \tilde v_0\otimes \cdots \otimes \tilde v_{d-2}$$
in $$ \partial\widetilde V = \widetilde V_\rho\otimes \widetilde V_0\otimes \cdots \otimes \widetilde V_{d-2}.$$ 
Then there is a linear form on $\partial V\otimes \partial \widetilde V$ given on elementary tensors by 
$$\partial\lambda 
(\partial v\otimes\partial\tilde v) = \frac{\lambda (v\otimes \tilde v)}{\lambda_{d-1}(v_{d-1}\otimes \tilde v_{d-1})}.
$$

Repeating this procedure one coordinate at a time, one gets a sequence $\partial^j\lambda  $ of linear forms on 
$\partial^j V\otimes \partial^j \widetilde V$ 
defined on elementary tensors by 
$$\partial^j\lambda 
(\partial^j v\otimes\partial^j\tilde v) 
= \frac{\lambda (v\otimes \tilde v)}{\prod_{i=1}^{j}\lambda_{d-i}(v_{d-i}\otimes \tilde v_{d-i})}.
$$
This ultimately yields the desired linear form $\lambda_\rho$.

We claim that
$$\lambda_\rho \in {\rm Hom}_{hH_{x}h^{-1}\cap H_{x}}({}^h\rho\otimes\tilde\rho ,1).$$  Indeed, if $k\in hH_{x}h^{-1}\cap H_{x}$ then, according to  Corollary \ref{Yufix}, we have
\begin{eqnarray*}
\lambda  ( v \otimes  \tilde v)&=&
\lambda  (\kappa  (h^{-1}kh)v \otimes \tilde\kappa (k)\tilde v)\\
&=&\lambda_\rho (\rho(h^{-1}kh)v_\rho \otimes \tilde\rho (k)\tilde v_\rho)\prod_{i=0}^{d-1}
\lambda_i (\omega_i (h^{-1}kh)v_i \otimes \tilde\omega_i (k)\tilde v_i)\\
&=&\lambda_\rho (\rho(h^{-1}kh)v_\rho \otimes \tilde\rho (k)\tilde v_\rho)\prod_{i=0}^{d-1}
\lambda_i ( v_i \otimes \tilde v_i).
\end{eqnarray*}


Since $\lambda_\rho$ is necessarily nonzero, there must be a corresponding nonzero element $\Lambda_\rho \in  I_h (\rho)$.    Since $I_h(\rho)$ is nonzero and  $\rho$ induces an irreducible representation of $H$, we deduce that $h$ lies in $H_{x}$, which completes the proof.
\end{proof}

\bigskip
\begin{remark}
In Proposition 4.6 \cite{MR1824988}, it is shown that  cuspidal $G$-data that satisfy conditions ${\bf SC1}_i$, ${\bf SC2}_i$, and ${\bf SC3}_i$ of  \cite{MR1824988} yield irreducible (hence supercuspidal) representations of $G$.  In Theorem 15.1 in \cite{MR1824988}, it is shown that generic, cuspidal $G$-data satisfy the {\bf SC} conditions.  Theorem 9.4, Theorem 11.5, and Theorem 14.2 of \cite{MR1824988}, respectively, are the results that specifically show that generic data satisfy the conditions ${\bf SC1}_i$, ${\bf SC2}_i$, and ${\bf SC3}_i$, respectively.  Our proof of Lemma \ref{irred} uses  Theorem 9.4  \cite{MR1824988} and Theorem 14.2 of \cite{MR1824988} (with the revised proof in Corollary \ref{Yufix} above).
\end{remark}

\bigskip
\begin{remark}
As just indicated, our proof of Lemma \ref{irred} is based partly on Yu's proof of  his Proposition 4.6.   But  in the latter part of his proof, Yu uses an inductive argument due to Bushnell-Kutzko \cite[\S5.3.2]{MR1204652}.  
By contrast, we have adapted the proof of Lemma 5.24 \cite{MR2431732}.  
(The latter result was not intended to handle intertwining issues, but it is interesting  that it can be used for this purpose.)
\end{remark}

\section{The connection with Yu's construction}\label{sec:YuisoftenMe}

The theory of generic cuspidal $G$-data was introduced by Yu in \cite{MR1824988}, but in discussing this theory we follow the notations and terminology of \cite[pp.~50, 52, 56]{MR2431732}.

Fix a generic cuspidal $G$-datum $\Psi = (\vec\bG , y,\rho_0 ,\vec\phi )$ and a torus $\bT$ as in \cite{MR2431732}.
Then Yu's construction associates to $\Psi$ a representation $\kappa (\Psi)$ of an open compact-mod-center subgroup $K(\Psi)$ of $G$.   The representation $\kappa (\Psi)$ induces an irreducible, supercuspidal representation $\pi (\Psi)$ of $G$.   (See \cite[page 68]{MR2431732}.)

Let $\bG = \bG^d$, $\bH = \bG^0$, $\bT$ and $y$ come from the given datum $\Psi$, and let 
$$\rho = \rho(\Psi)= \rho_0 \otimes \prod_{i=0}^d (\phi_i |H_{x} ),$$ where $x$ is the vertex in $\mathscr{B}_{\rm red} (\bH,F)$ associated to $y$.

Let $\vec r = (r_0,\dots , r_d)$ be the sequence of depths associated to $\Psi$ (as in \cite{MR2431732}).  Let $$\phi = \prod_{i=0}^d (\phi_i|H_{x,0+}).$$
Then  $\phi$ and $\phi_i$  agree on $[G^{i+1},G^{i+1}]\cap H_{x,r_{i-1}+}$.

For each $i\in \{ 0,\dots , d-1\}$, we have a symplectic space $W_i$ associated to $\Psi$ as in \cite[pp.~66]{MR2431732}.
Associated to $W_i$, we can define a Heisenberg group structure on 
$\mathscr{H}_i = W_i\times \mu_p$, as in \S\ref{sec:HeisWeil}.

Define another Heisenberg group $W_i^\sharp$ by letting $$W_i^\sharp = W_i \times (J^{i+1}_+/\ker\zeta_i)$$ with multiplication
defined as in \cite[pp.~22, 59, 67]{MR2431732}.
Then $(w,z)\mapsto (w,\zeta_i(z))$ defines an isomorphism $W^\sharp_i\cong \mathscr{H}_i$ of Heisenberg groups.

Fix a Heisenberg representation $(\tau_i ,V_i)$ of $\mathscr{H}_i$.  Let $(\tau_i^\sharp , V_i)$ be the Heisenberg representation of $W_i^\sharp$ obtained via the given isomorphism $W^\sharp_i\cong \mathscr{H}_i$.

Extending $\tau_i$ as in \S\ref{sec:HeisWeil}, we obtain a Weil representation $\mathscr{S}_i\to \GL(V_i)$.  This is the same as the Weil representation that comes from $\tau_i^\sharp$ in \cite[p.~67]{MR2431732}.

We have a map $H_{x}\to \mathscr{S}_i$ given by conjugation.  Pulling back the Weil representation via the latter map yields a representation $$\omega_i : H_{x}\to \GL(V_i).$$

In Yu's construction and ours, we must arbitrarily choose  for each $i\in \{ 0,\dots ,d-1\}$ a Heisenberg representation within a prescribed isomorphism class.  
Implicit in the statement of the next result is the assumption that we choose the same family of Heisenberg representations when we construct $\kappa (\Psi)$ and $\kappa (\rho)$.
More precisely, we first choose a family of Heisenberg representations $\tau_i^\sharp$ as in \cite[p.~67]{MR2431732}, then, after it is established that $\vec\bG (\Psi) = \vec\bG(\rho)$ and $\vec r(\Psi) = \vec r(\rho)$, we use in the construction of $\kappa (\rho)$ the Heisenberg representations $\tau_i$ associated to the $\tau_i^\sharp$'s as above.

\bigskip
\begin{lemma}\label{IamYu}
Suppose $\Psi$ is a generic cuspidal $G$-datum and  $\rho$ is the corresponding representation of $H_{x}$.  Then:
\begin{itemize}
\item $\rho$ is a permissible representation.
\item The given sequences $\vec\bG = (\bG^0,\dots, \bG^d)$ and $\vec r = (r_0,\dots, r_d)$ associated to $\Psi$ are identical to the corresponding sequences associated to $\rho$.
\item The representations $\kappa (\rho)$ and $\kappa (\Psi)$ are  defined on the same group $K$ acting on the same space $V$ and the two representations are identical on the subgroups $H_{x}$, $J^1_+$, $\dots$, $J^d_+$.
\item The characters of both representations have support in $H_{x}J^1_+\cdots J^d_+$.
\end{itemize}
Consequently, the characters of $\kappa (\Psi)$ and $ \kappa (\rho)$ are identical and thus these representations of $K$ (and the associated tame supercuspidal representations of $G$) are equivalent.
\end{lemma}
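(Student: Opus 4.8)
The plan is to deduce the equivalence $\pi(\Psi)\cong\pi(\rho)$ from the uniqueness clause of Theorem \ref{ourmainresult}. Once we know that $\rho=\rho(\Psi)$ is permissible and that the sequences $\vec\bG$ and $\vec r$ attached to $\rho$ coincide with those of $\Psi$ (the first two bullets), the groups $K$, $K_+$, the character $\hat\phi$, and the representations $\omega_i$ are all defined and depend only on those data; we then verify that Yu's representation $\kappa(\Psi)$ has the three properties (1)--(3) of Theorem \ref{ourmainresult} characterizing $\kappa(\rho)$, so that $\kappa(\Psi)\cong\kappa(\rho)$, and apply ${\rm ind}_K^G$. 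The last two bullets of the lemma (agreement on $H_x$ and on each $J^{i+1}_+$, and the location of the supports) come out as byproducts of this verification. Thus the content of the lemma is concentrated in the first two bullets, and the main obstacle will be the second one.

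We first check permissibility of $\rho=\rho_0\otimes\prod_{i=0}^d(\phi_i|H_x)$. Condition (1) of Definition \ref{permissibledef} follows from Corollary \ref{corinter}: $\rho_0$ induces an irreducible representation of $H$ by the defining properties of a cuspidal $G$-datum, and $\prod_i(\phi_i|H_x)$ is $H$-normal, being the restriction to $H_x$ of a quasicharacter of $H$. Condition (2) holds because $\rho_0$ has depth zero, so $\rho|H_{x,0+}$ is a multiple of $\prod_i(\phi_i|H_{x,0+})=\phi$. For condition (3), each $\phi_i$ is a quasicharacter of a group $G^i\supseteq H$, and any quasicharacter of $H$ is trivial on $H^\flat_{{\rm der},x,0+}$ by the argument in the proof of the Proposition following Lemma \ref{goodp} (pull back to a $z$-extension and invoke \cite[Lemma 3.5.1]{KalYu}); hence so is $\phi$. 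Condition (4) follows from Lemma \ref{GEonelemma} once the dual coset $(\phi|Z^{i,i+1}_{r_i})^*$ is seen to contain the element $X^*_i$ representing $\phi_i|G^i_{x,r_i}$, which is $\bG^{i+1}$-generic by the genericity built into $\Psi$; for this we use the observation of the Remark in \S\ref{sec:recovery} that $\phi_{i+1},\dots,\phi_d$ are trivial on $[G^{i+1},G^{i+1}]\cap H_{x,0+}$ and $\phi_0,\dots,\phi_{i-1}$ are trivial on $H_{x,r_i}$, so $\phi$ and $\phi_i$ agree on $[G^{i+1},G^{i+1}]\cap H_{x,r_i}=H^i_{x,r_i}\supseteq Z^{i,i+1}_{r_i}$, the equality by Lemma \ref{goodp} applied to $\bG^{i+1}$.

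The main obstacle is to show $\vec\bG(\Psi)=\vec\bG(\rho)$ and $\vec r(\Psi)=\vec r(\rho)$. The plan here is to use the non-recursive description of these invariants from the Remark in \S\ref{sec:recovery}: $r_0<\cdots<r_{d-1}$ are the positive values among the depths of the characters $\phi|T_{\mathscr{O},0+}$, $\mathscr{O}\in\Gamma\backslash\Phi$, and $\bG^i$ is the $E$-Levi of $\bG$ whose root system is $\bigcup\{\mathscr{O}:\phi|T_{\mathscr{O},r_i}=1\}$. The key computation is: for $\mathscr{O}$ contained in the layer $\Phi(\bG^{j+1},\bT)\setminus\Phi(\bG^j,\bT)$ of the flag coming from $\Psi$, the character $\phi|T_{\mathscr{O},0+}$ has depth exactly $r_j$. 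Writing $\phi|T_{\mathscr{O},0+}=\prod_k(\phi_k|T_{\mathscr{O},0+})$, the factors with $k<j$ have depth $<r_j$ (the depth of $\phi_k$ being $r_k$); the factors with $k>j$ are trivial on $T_{\mathscr{O},0+}$, since $T_{\mathscr{O},0+}$ lies in the positive-depth part of a semisimple subgroup of $\bG^k$ and hence in $[G^k,G^k]$ (cf.\ the proof of Lemma \ref{goodp}); and the factor with $k=j$ has depth exactly $r_j$, which is exactly where Yu's condition {\bf GE1} for $\phi_j$ is used, through Lemmas \ref{painintheneck} and \ref{GEonelemma}, which convert $v_F(X^*_j(H_a))=-r_j$ for $a\in\mathscr{O}$ into $\phi_j|T_{\mathscr{O},r_j}\ne1$ (while $\phi_j|T_{\mathscr{O},r_j+}=1$ because $\phi_j$ has depth $r_j$). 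Running this bookkeeping over all orbits identifies the $r_i$ and the root systems $\Phi(\bG^i,\bT)$, hence the $\bG^i$; the depth of $\rho$ is recovered as $r_d$ in the same way. We carry this out assuming $p\nmid|\pi_1(\bG_{\rm der})|$ and reduce the remaining case to it by passing to a $z$-extension as in \S\ref{sec:zext} and the Proposition there; this handles the divisibility issue that complicates the definition of the $r_i$, while condition (4) remains under control via the genericity of $\Psi$ (compare Remark \ref{conditionfourrem}).

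Granting the first two bullets, all the ancillary objects of the two constructions coincide: $J^{i+1}$, $J^{i+1}_+$, $K$, $K_+$ are literally the same, the symplectic spaces $W_i$ and Heisenberg $p$-groups $\mathscr{H}_i$ agree (via the isomorphism $W_i^\sharp\cong\mathscr{H}_i$ recalled before the lemma), and, choosing the same Heisenberg representations $\tau_i$, the spaces $V_i$ and $V=V_\rho\otimes V_0\otimes\cdots\otimes V_{d-1}$ agree. We then verify for $\kappa(\Psi)$ the three properties of Theorem \ref{ourmainresult}. Property (1): the character of $\kappa(\Psi)$ is supported in $H_xK_+=H_xJ^1_+\cdots J^d_+$ (Lemma \ref{Kplusidentity}(2)), which is the analogue for $\kappa(\Psi)$ of Lemma \ref{kappainvariance} and is established in \cite{MR2431732} (cf.\ the proof of Proposition 4.24 there and the Remark after Lemma \ref{kappainvariance}). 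Property (3): unwinding the definition of $\kappa(\Psi)$ and using the identification of the Weil representations $\omega_i$ recalled before the lemma, $\kappa(\Psi)|H_x=\rho_0\otimes\bigotimes_i\big((\phi_i|H_x)\,\omega_i\big)\otimes(\phi_d|H_x)=\rho\otimes\omega_0\otimes\cdots\otimes\omega_{d-1}$, since each $\phi_i|H_x$ is a scalar; this simultaneously gives agreement on $H_x$. Property (2): on $H_{x,0+}$ we have $\kappa(\Psi)|H_{x,0+}$ a multiple of $\phi$, and on each $J^{i+1}_+$ both $\kappa(\Psi)$ and $\kappa(\rho)$ act by the scalar character $\zeta_i=\hat\phi|J^{i+1}_+$ (for $\kappa(\rho)$ because $\nu_i(J^{i+1}_+)\subseteq\mathscr{Z}_i$ and $\chi_i\cdot\zeta_i=\hat\phi$ there by Lemma \ref{Kplusidentity}(5); for $\kappa(\Psi)$ by Yu's analogous identity, his $\hat\phi|J_+$ being our $\zeta_i$); since $K_+=H_{x,0+}J^1_+\cdots J^d_+$ and $\hat\phi$ is the inflation of $\phi$, it follows that $\kappa(\Psi)|K_+$ is a multiple of $\hat\phi$, and also that $\kappa(\Psi)$ and $\kappa(\rho)$ agree on each $J^{i+1}_+$ and have equal support. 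By the uniqueness in Theorem \ref{ourmainresult}, $\kappa(\Psi)\cong\kappa(\rho)$, whence $\pi(\Psi)={\rm ind}_K^G\kappa(\Psi)\cong{\rm ind}_K^G\kappa(\rho)=\pi(\rho)$.
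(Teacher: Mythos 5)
Your proposal follows the same overall architecture as the paper — prove the first two bullets, then match the constructions on $H_x$ and each $J^{i+1}_+$ and appeal to the uniqueness clause of Theorem~\ref{ourmainresult} — but your route to the second bullet is genuinely different. The paper argues via the \emph{recursive} characterization of $(\vec\bG,\vec r)$: it picks Yu's $\bG^{i+1}$-generic $Z^*_i\in\mathfrak{z}^{i,*}_{-r_i}$ representing $\phi_i|G^i_{x,r_i}$, projects it along $\mathfrak{z}^{i,*}=\mathfrak{z}^{i+1,*}\oplus\mathfrak{z}^{i,i+1,*}$ to get $X^*_i$, shows via Lemma~\ref{goodp} that $\phi|H^i_{x,r_i}=\phi_i|H^i_{x,r_i}$ so that $X^*_i$ lies in the dual coset, and then reads off $r_i=\mathrm{depth}(\phi|H^i_{x,0+})$ and $\bG^i$ as the largest $E$-Levi with $\phi|H^{i-1}_{x,r_i}=1$. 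You instead use the \emph{non-recursive} characterization from the Remark in \S\ref{sec:recovery}, computing the depth of $\phi|T_{\mathscr{O},0+}$ layer by layer; both routes rest on the same core fact (Lemma~\ref{goodp} applied to $\bG^{i+1}$, after a $z$-extension to kill $\pi_1$). Two points of caution in your write-up: first, your citation of Lemma~\ref{GEonelemma} to verify condition~(4) is circular, since the proof of that lemma itself invokes condition~(4) (or Lemma~8.1 of \cite{MR1824988}) to pass from $\mathbf{GE1}$ to $\mathbf{GE2}$; the correct argument — which you also gesture at — is simply that $X^*_i$, as the $\mathfrak{z}^{i,i+1,*}$-projection of Yu's generic $Z^*_i$, takes the same values on the coroots $H_a$ for $a\in\Phi^{i+1}$ (the $\mathfrak{z}^{i+1,*}$-component $Y^*_i$ annihilates $\mathfrak{g}^{i+1}_{\rm der}$) and therefore inherits both $\mathbf{GE1}$ and $\mathbf{GE2}$ directly, so condition~(4) holds without any appeal to Lemma~\ref{GEonelemma}. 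Second, the phrase ``the element $X^*_i$ representing $\phi_i|G^i_{x,r_i}$'' conflates $Z^*_i\in\mathfrak{z}^{i,*}_{-r_i}$ (which represents $\phi_i|G^i_{x,r_i}$) with its projection $X^*_i\in\mathfrak{z}^{i,i+1,*}_{-r_i}$ (which only represents the restriction to $Z^{i,i+1}_{r_i}$); the dual coset by definition lives in $\mathfrak{z}^{i,i+1,*}_{-r_i:(-r_i)+}$ and contains $X^*_i$, not $Z^*_i$. With these two phrasings corrected, the argument is sound and matches the paper's except in the choice of characterization used for the second bullet.
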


\begin{proof}
Fix $\Psi = (\vec\bG , y,\rho_0 ,\vec\phi )$ and  $\bT$, as above.
We may as well replace $y$ in the datum with the point $x=[y]$, since only $x$ is relevant to Yu's construction.  As noted in \cite[\S3.5]{KalYu}, if $\bG^\sharp$ is $z$-extension of $\bG$ as in \S\ref{sec:zext}, then $\Psi$ pulls back to a generic, cuspidal $G^\sharp$-datum $\Psi^\sharp = (\vec\bG^\sharp, x,\rho_0^\sharp, \vec\phi^\sharp)$ such that Yu's representation $\pi (\Psi)$ of $G$ pulls back to his representation $\pi (\Psi^\sharp)$ of $G^\sharp$.
Therefore, we may as well assume that $\bG_{\rm der}$ is simply connected.

(Note that the complications regarding the definition of the $r_i$'s in \S\ref{sec:recovery} essentially disappear when considering Yu's construction, since Yu imposes more restrictive conditions on his objects.  For example, a given factor $\phi_i$ always has the same depth as its pullback $\phi_i^\sharp$, since we have surjections $G^{i,\sharp}_{x,r}\to G^i_{x,r}$ for all $r\ge 0$, according to Lemma 3.5.3 \cite{KalYu}.)

Let
$$\rho = \rho(\Psi)= \rho_0 \otimes \prod_{i=0}^d (\phi_i |H_{x} ).$$
In our construction, we assume that there is a character $\phi$ of $H_{x,0+}$ such that $\rho|H_{x,0+}$ is a multiple of $\phi$.  Such a character $\phi$ exists in the present situation and, according to \cite[Corollary 3.28]{MR2431732}, it is given by $$\phi = \prod_{j=0}^d(\phi_j|H_{x,0+}).$$

In general, there are various objects, such as the $\bG^i$'s, associated to $\Psi$ and, in our construction, there are similar objects associated to $\phi$.  
We need to show that the objects $\vec\bG$, $\vec r$ and $\vec\phi$ coming from $\Psi$ can be recovered from
$\phi$ exactly as in our construction.

Assume $i\in \{ 1,\dots , d-1\}$.
Let $\bZ^i$ denote the center of $\bG^i$ and let $\bZ^{i,i+1} = (\bG^{i+1}_{\rm der}\cap \bZ^i)^\circ$.
Choose a $G^{i+1}$-generic element $Z^*_i\in \mathfrak{z}^{i,*}_{-r_i}$ of depth $-r_i$ that represents $\phi_i |G^i_{x,r_i}$.
Using the decomposition
$$\mathfrak{z}^{i,*}_{-r_i} = \mathfrak{z}^{i+1,*}_{-r_i}\oplus \fr{z}^{i,i+1,*}_{-r_i},$$
we write $Z^*_i = Y^*_i +X^*_i$.
Then $Y^*_i$ represents the trivial character of $H^i_{x,r_i}$ since $\mathfrak{z}^{i+1,*}$ is orthogonal to $\mathfrak{g}^{i+1}_{\rm der}$.  Therefore, $Z^*_i$ and $X^*_i$ represent the same character of $H^i_{x,r_i}$, namely, $\phi_i |H^i_{x,r_i}$.

Since we assume $\bG_{\rm der}$ is simply connected, it follows from Lemma \ref{goodp} (with $\bH$ replaced by $\bG^{i+1}$) that 
$$[G^{i+1},G^{i+1}] \cap H_{x,r_i} = H^i_{x,r_i}$$
and, consequently, $\phi  |H^i_{x,r_i}=\phi_i |H^i_{x,r_i}$.  Therefore, $X^*_i$ must lie in the dual coset $(\phi |Z^{i,i+1})^*\in \fr{z}^{i,i+1,*}_{-r_i:(-r_i)+}$.

%

We observe now that, just as in our construction,
 $r_i$ must be the depth of $\phi |H^i_{x,0+}$, and $\bG^i$ is the unique maximal subgroup of $\bG$ containing $\bH$ such that $\bG^i$ is defined over $F$ and is an $E$-Levi subgroup of $\bG$, and $\phi |H^{i-1}_{x,r_i}=1$.

Thus, we have established  that the sequences $\vec\bG$ and $\vec r$ associated to $\Psi$ and $\phi$ coincide.  It is also follows form the remarks in \S\ref{sec:HisG} that $\rho$ is permissible.
The fact that $\vec\bG$ and $\vec r$ coincide for $\Psi$ and $\rho$ implies that the associated subgroups of $G$, such as $K$, are  the same for $\Psi$ and $\phi$.

It is routine to verify from the definitions that $\kappa (\Psi)|H_{x}$ and $\kappa (\rho)|H_{x}$ are the identical representation of the group $H_{x}$ on the same space $V$.  
We refer the reader to \cite[pp.~66--68]{MR2431732} for the definition of $\kappa (\Psi)$ and we note that it is easy to see that the representation $\omega_i$ defined just before the statement of the present lemma is given by $\omega_i (h) = \hat\tau_i^\sharp (f'_i(h))$, in the notation of in \cite[p.~67]{MR2431732}.

We also observe that both $\kappa (\Psi)$ and $\kappa (\rho)$ act according to a multiple of the character $\hat\phi$ on each of the subgroups $J^{i+1}_+$.
In the case of $\kappa (\Psi)$, this is \cite[Corollary 3.28]{MR2431732}.

The fact that the support of the character of $\kappa (\Psi)$ is contained in $H_{x}J^1_+\cdots J^d_+$ is shown in the proof of \cite[Proposition 4.24]{MR2431732}.  The corresponding fact for $\kappa (\rho)$ is our Lemma \ref{kappainvariance}.
\end{proof}

\section{Distinguished cuspidal repre\-sen\-ta\-tions for $p$-adic groups and  finite groups of Lie type}\label{sec:finitestuff}

The motivation for this paper was a desire to unify the $p$-adic and finite field theories of distinguished cuspidal representations.  We now state in a uniform way a theorem that applies both to  $p$-adic and finite fields.   The proof appears in a companion paper \cite{ANewHM}.

In this section, we simultaneously address two cases that we refer to as ``the $p$-adic case'' and ``the finite field case.''
In the $p$-adic case, $F$ is, as usual, a finite extension of $\Q_p$ with $p$ odd.  In the finite field case, $F= \mathbb{F}_{q}$ where $q$ is a power of an odd prime $p$.  Let $\bG$ be a connected, reductive $F$-group and let $G = \bG (F)$.

In the $p$-adic case, we let $\rho$ be a permissible representation of $H_{x}$.  In the finite field case, we let $\rho$ be a character in general position of $\bT (\mathbb{F}_q)$, where $\bT$ is an $F$-elliptic maximal $F$-torus.
For the sake of unity, we let $L$ denote $H_{x}$ in the $p$-adic case and $\bT (\mathbb{F}_q)$ in the finite field case.

Let $\pi (\rho)$ be the irreducible supercuspidal or cuspidal Deligne-Lusztig representation of $G$ associated to $\rho$.  Let $\mathscr{I}$ be the set of $F$-automorphisms of $\bG$ of order two, and let $G$ act on $\mathscr{I}$ by
$$g\cdot \theta = {\rm Int}(g)\circ \theta \circ {\rm Int}(g)^{-1},$$ where ${\rm Int}(g)$ is conjugation by $g$.  Fix a $G$-orbit $\Theta$ in $\mathscr{I}$.
Given $\theta\in \mathscr{O}$, let $G_\theta$ be the stabilizer of $\theta$ in $G$.  Let $G^\theta$ be the group of fixed points of $\theta$ in $G$.  
When $\vartheta$ is an $L$-orbit in $\Theta$, let $${\rm m}_L (\vartheta) = [G_\theta :G^\theta (G_\theta\cap L)],$$ for some, hence all, $\theta\in \vartheta$.

Let $\langle \Theta,\rho\rangle_G$ denote the dimension of the space ${\rm Hom}_{G^\theta}(\pi (\rho),1)$  of $\C$-linear forms on the space of $\pi (\rho)$ that are invariant under the action of $G^\theta$ for some, hence all, $\theta\in \Theta$.

For each $\theta$ such that $\theta (L) = L$, we define a character $$\varepsilon_{{}_{L,\theta}} :L^\theta \to \{ \pm 1\}$$ as follows.

In the finite field case,
$$\varepsilon_{{}_{L,\theta}} (h) = \det \left({\rm Ad}(h)|\mathfrak{g}^\theta \right).$$
One can show that this is the same as the character $\varepsilon$
defined in \cite[page 60]{MR1106911}.

In the $p$-adic case,
$$\varepsilon_{{}_{L,\theta}} (h) = \prod_{i=0}^{d-1} \left(\frac{\det\nolimits_{{\frak f}} ({\rm Ad}(h)\, |\, {\frak W}_i^+)}{\fr{P}_F}\right)_2,$$
where our notations are as follows.

First, we take
$$\mathfrak{W}_i^+
=\bigg(\bigg(\bigoplus_{a\in \Phi^{i+1}-\Phi^i}\bfr{g}_a\bigg)^{{\rm Gal}(\overline{F}/F)}\bigg)^\theta_{x,s_i:s_i+},$$ viewed as a vector space over the residue field $\fr{f}$ of $F$.
In other words, $\mathfrak{W}_i^+$ is, roughly speaking,  the space of $\theta$-fixed points in the Lie algebra of $W_i= J^{i+1}/J^{i+1}_+$. 

Next, for $u\in \fr{f}^\times$, we let $(u/\fr{P}_F)_2$ denote the quadratic residue symbol.  This is related to the ordinary Legendre symbol   by
$$\left(\frac{u}{\fr{P}_F}\right)_2
=\left(\frac{N_{\fr{f}/\mathbb{F}_p} (u)}{p}\right) = (N_{\fr{f}/\mathbb{F}_p} (u))^{(p-1)/2}= u^{(q_F-1)/2}.$$

We remark that in the $p$-adic case, $\varepsilon_{{}_{L,\theta}}$ is the same as the character $\eta'_\theta$ defined in \cite[\S5.6]{MR2431732}.

In both the finite field and $p$-adic cases, one can, at least in most cases, compute the determinants arising in the definition of $\varepsilon_{{}_{L,\theta}}$ and provide  elementary expressions for them in terms of ${\rm Gal}(\overline{F}/F)$-orbits of roots.

When $\vartheta$ is an $L$-orbit in $\Theta$, we write $\vartheta \sim \rho$ when $\theta (L) = L$ for some, hence all $\theta\in \vartheta$, and when 
the space ${\rm Hom}_{L^\theta} (\rho ,\varepsilon_{{}_{L,\theta}})$ is nonzero.
When $\vartheta\sim \rho$, we define
$$\langle \vartheta ,\rho\rangle_L = \dim {\rm Hom}_{L^\theta} (\rho ,\varepsilon_{{}_{L,\theta}}),$$ where $\theta$ is any element of $\vartheta$.  (The choice of $\theta$ does not matter.)

With these notations, we may have the following:

\medskip
\begin{theorem}\label{motivation} $\langle \Theta,\rho\rangle_G = \sum_{\vartheta\sim \rho} {\rm m}_L(\vartheta)\, \langle \vartheta , \rho\rangle_L$.\end{theorem}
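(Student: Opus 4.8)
The plan is to reduce Theorem~\ref{motivation} to the combination of the $p$-adic distinction results of \cite{MR2431732} (in the $p$-adic case) and Lusztig's results of \cite{MR1106911} (in the finite field case), via the dictionary supplied by the construction in \S\ref{sec:construction}. In the finite field case, $\pi(\rho)$ is a cuspidal Deligne--Lusztig representation, $L = \bT(\mathbb{F}_q)$, $\varepsilon_{L,\theta}$ is exactly the sign character $\varepsilon$ of \cite[p.~60]{MR1106911} (as noted in the statement), and the identity $\langle\Theta,\rho\rangle_G = \sum_{\vartheta\sim\rho}{\rm m}_L(\vartheta)\,\langle\vartheta,\rho\rangle_L$ is a reformulation of Lusztig's multiplicity formula: $\dim{\rm Hom}_{G^\theta}(R_{\bT}^{\bG}(\rho),1)$ is computed orbit-by-orbit over the $\bT(\mathbb{F}_q)$-orbits of involutions, with each contribution a product of an index ${\rm m}_L(\vartheta)$ coming from the normalizer combinatorics and a Hom-space dimension on the torus twisted by $\varepsilon$. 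So in that case the theorem is essentially bookkeeping to match Lusztig's indexing with the $G$-orbit/$L$-orbit indexing used here; the work is checking that ${\rm m}_L(\vartheta)$ and $\langle\vartheta,\rho\rangle_L$ as defined above agree with the quantities appearing in \cite{MR1106911} and the earlier reformulation in \cite{MR2925798}.

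For the $p$-adic case, the strategy is to use $\pi(\rho) = {\rm ind}_K^G(\kappa(\rho))$ from Theorem~\ref{ourmainresult}, compute ${\rm Hom}_{G^\theta}(\pi(\rho),1)$ by a Mackey/Frobenius-reciprocity argument over the double cosets $G^\theta \backslash G / K$, and then organize the surviving terms. Using the corollary's Frobenius character formula for $\Theta_\pi$ together with the fact that $\dim{\rm Hom}_{G^\theta}(\pi(\rho),1) = \langle \Theta_\pi, 1\rangle_{G^\theta}$ (an integration of the character against the trivial character of $G^\theta$, which makes sense since $\pi(\rho)$ is supercuspidal hence the relevant orbital integrals converge), one reduces to a sum over $G$-conjugates of $\theta$ of local terms attached to the fixed-point groups $K \cap {}^gG^\theta$. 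Each local term is a distinction question for the finite-level data: it factors through the Heisenberg--Weil pieces $\tau_{\nu_i}$ and the depth-zero piece $\rho$ (or $\rho_0$), and the sign character $\varepsilon_{L,\theta}$ arises precisely from the action of $\theta$ on the symplectic spaces $W_i$ via the formula for $\varepsilon_{L,\theta}$ in terms of $\mathfrak{W}_i^+$ — this is where \cite[\S5.6]{MR2431732} and the identification of $\varepsilon_{L,\theta}$ with $\eta'_\theta$ enter. After this reduction, the double cosets that contribute are exactly those meeting $H_x$ in an $H_x$-orbit $\vartheta$ of involutions with $\vartheta \sim \rho$; the multiplicity count for how many $G$-conjugates of $\theta$ lie over a given $\vartheta$ is ${\rm m}_L(\vartheta) = [G_\theta : G^\theta(G_\theta \cap L)]$; and the contribution of each such $\vartheta$ is $\langle\vartheta,\rho\rangle_L = \dim{\rm Hom}_{L^\theta}(\rho,\varepsilon_{L,\theta})$.

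The key steps, in order, would be: (1)~state the Mackey decomposition of ${\rm Hom}_{G^\theta}({\rm ind}_K^G\kappa,1)$ as a sum over $g \in G^\theta\backslash G/K$ of ${\rm Hom}_{{}^{g^{-1}}G^\theta \cap K}(\kappa, 1)$, and use the character support statement (character of $\kappa$ supported in $H_xK_+$, from Lemma~\ref{kappainvariance}) to discard double cosets whose intersection with $K$ avoids $H_xK_+$; (2)~on each surviving double coset, use the tensor decomposition $\kappa|H_x = \rho\otimes\omega_0\otimes\cdots\otimes\omega_{d-1}$ and the Heisenberg--Weil intertwining theory of \S\ref{sec:HeisWeil} (especially Lemma~\ref{pintertwine} and Corollary~\ref{Yufix}) to peel off, one factor at a time, the contribution of each $\omega_i$, reducing to a distinction problem for $\rho$ on $H_x$ twisted by a character that one identifies with $\varepsilon_{L,\theta}|H_x^\theta$; (3)~identify that twisting character via the $\theta$-action on the $W_i$'s, matching it with the product formula for $\varepsilon_{L,\theta}$ through $\mathfrak{W}_i^+$; (4)~count the double cosets lying over a fixed $H_x$-orbit $\vartheta$, obtaining the index ${\rm m}_L(\vartheta)$; and (5)~assemble and compare with the finite field case, where steps (1)--(4) are replaced by Lusztig's formula. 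The main obstacle I expect is step~(2)--(3): controlling the sign/metaplectic character that the Weil representations $\omega_i$ contribute when restricted to the fixed-point group $H_x^\theta$, and checking that it is exactly $\varepsilon_{L,\theta}$ and not some additional quadratic twist — this is the delicate point already flagged in the excerpt's remarks about the nontriviality of the character $\chi$ in the intertwining theory, and it is presumably why the detailed proof is deferred to \cite{ANewHM}; it requires a careful analysis of how $\theta$ acts on the polarizations $W^+ = W_1\oplus W_2$ and on the Heisenberg groups $\mathscr{H}_i$, together with the computation of the relevant Gauss sums / quadratic residue symbols $(\det_{\fr f}({\rm Ad}(h)|\mathfrak{W}_i^+)/\fr{P}_F)_2$.
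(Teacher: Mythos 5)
The paper does not actually prove Theorem~\ref{motivation}: both the introduction and the opening of \S\ref{sec:finitestuff} state explicitly that the proof appears only in the companion paper \cite{ANewHM}. So there is no proof in the present text against which to match your argument step-by-step; what can be assessed is whether your sketch is a plausible and complete route to the theorem. It is plausible in outline but it is not a proof, because the step you label as the ``main obstacle'' is in fact the entire mathematical content of the $p$-adic case and you leave it unresolved.

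Concretely, your steps (2)--(3) are where everything lives. After the Mackey decomposition (step (1), which is correct) and after peeling off the Heisenberg--Weil factors $\omega_i$, one is left with a distinction problem for $\rho$ on a form of $H_x$ twisted by \emph{some} quadratic character arising from the $\theta$-action on the symplectic data. That this character coincides exactly with the product of quadratic residue symbols $\prod_i\bigl(\det_{\fr{f}}({\rm Ad}(h)\,|\,\fr{W}_i^+)/\fr{P}_F\bigr)_2$, rather than differing by an extra metaplectic/quadratic twist, is precisely Theorem~\ref{motivation}. You assert it (``the sign character $\varepsilon_{L,\theta}$ arises precisely from the action of $\theta$ on the symplectic spaces $W_i$'') and then acknowledge that you cannot verify it and that ``this is presumably why the detailed proof is deferred''; at that point the argument stops being a proof. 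The paper itself flags, in the remark following Corollary~\ref{Yufix}, exactly the failure mode you should be worried about: the character $\chi$ on $\mathscr{S}\cap{}^g\mathscr{S}$ that corrects the restricted Weil representation ``need not be trivial,'' so there is a real possibility of an unwanted extra quadratic twist, and ruling it out requires the Gauss-sum analysis that you defer. Separately, your aside that $\dim{\rm Hom}_{G^\theta}(\pi(\rho),1) = \langle\Theta_\pi,1\rangle_{G^\theta}$ ``by integration'' is not a valid substitute for, and not needed given, the Mackey decomposition: $G^\theta$ is not compact, the ``integral of the character against the trivial character'' is not literally defined, and the justification ``the orbital integrals converge'' conflates two different objects. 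You should drop that and rely on Mackey alone. Finally, step (4) (that the number of $G^\theta$-$K$ double cosets lying over a fixed $H_x$-orbit $\vartheta$ produces the index $[G_\theta: G^\theta(G_\theta\cap L)]$) is asserted without argument; this is an orbit-counting computation that also needs to be carried out, though it is not as deep as (2)--(3).
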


\bigskip
Note that in  the special case in which
 \begin{itemize}
\item $\bG$ is a product $\bG_1\times\bG_1$,
\item $\cO$ contains the involution $\theta (x,y) = (y,x)$, 
\item $\rho$ has the form $\rho_1\times \tilde\rho_2$,
\end{itemize}
where $\tilde\rho_2$ is the contragredient of $\rho_2$,
we have
$$\langle \Theta,\rho \rangle_G = \dim {\rm Hom}_{G}(\pi (\rho_1),\pi(\rho_2)).$$
In the finite field case, our theorem in the latter situation is consistent with  the  Deligne-Lusztig inner product formula \cite[Theorem 6.8]{MR0393266}.  (See \cite[page 58]{MR1106911}, for more details.)

\bibliographystyle{amsalpha}
\bibliography{JLHrefs}

\end{document}